\theoremstyle{plain}
   \newtheorem{theorem}{Theorem}[section]
   \newtheorem{proposition}[theorem]{Proposition}
   \newtheorem{lemma}[theorem]{Lemma}
   \newtheorem{corollary}[theorem]{Corollary}
   \newtheorem{conjecture}[theorem]{Conjecture}
\theoremstyle{definition}
   \newtheorem{definition}{Definition}[section]
   \newtheorem{example}{Example}[section]
\theoremstyle{remark}
   \newtheorem{remark}[theorem]{Remark}
\numberwithin{equation}{section}
\def\kk{\kern.2ex\mbox{\raise.5ex\hbox{{\rule{.35em}{.12ex}}}}\kern.2ex}
\newcommand{\NN}{\mathbb{N}}
\newcommand{\lma}{\lambda_{\rm max}}
\newcommand{\lmi}{\lambda_{\rm min}}
\newcommand{\ww}{\mathbf{w}}
\newcommand{\ee}{\mathbf{e}}
\newcommand{\xx}{\mathbf{x}}
\newcommand{\yy}{\mathbf{y}}
\newcommand{\uu}{\mathbf{u}}
\newcommand{\vv}{\mathbf{v}}
\newcommand{\one}{\mathbf{1}}
\newcommand{\MM}{\mathcal{M}}
\newcommand{\BB}{\mathcal{B}}
\newcommand{\VV}{\mathcal{V}}
\newcommand{\FF}{\mathcal{F}}
\newcommand{\RR}{\mathbb{R}}
\newcommand{\CC}{\mathbb{C}}
\renewcommand{\Im}{{\rm Im}}
\newcommand{\rk}{{\rm rk}}
\title[Non--representable hyperbolic matroids]{Non--representable hyperbolic matroids} 
\author{Nima Amini}
\author{Petter Br\"and\'en}
\thanks{The second author is a Wallenberg Academy Fellow 
  supported by a grant from the Knut and Alice Wallenberg
  Foundation, and the Swedish Research Council (VR)}
\address{Department of Mathematics, Royal Institute of Technology, SE-100 44 Stockholm,
Sweden}
\email{namini@kth.se, pbranden@kth.se}
\begin{document}
\begin{abstract}
The generalized Lax conjecture asserts that each hyperbolicity cone is a linear slice of the cone of positive semidefinite matrices. Hyperbolic polynomials give rise to a class of (hyperbolic) matroids which properly contains the class of matroids representable over the complex numbers. This connection was used by the second author to construct counterexamples to algebraic (stronger) versions of the generalized Lax conjecture by considering a non--representable hyperbolic matroid. The V\'amos matroid and a generalization of it are, prior to this work, the only known instances of non--representable hyperbolic matroids. 

We prove that the Non--Pappus and Non--Desargues matroids are non-representable hyperbolic matroids by exploiting a connection between Euclidean Jordan algebras and projective geometries. We further identify a large class of hyperbolic matroids which contains the V\'amos matroid and the generalized V\'amos matroids recently studied by Burton, Vinzant and Youm. This proves a conjecture of Burton \emph{et al}. We also prove that many of the matroids considered here are non--representable. The proof of hyperbolicity for the matroids  in the class depends on proving nonnegativity of certain symmetric polynomials. In particular we generalize and strengthen several inequalities in the literature, such as the Laguerre--Tur\'an inequality and Jensen's inequality. Finally we explore consequences to algebraic versions of the generalized Lax conjecture. 
\end{abstract}
\maketitle
\tableofcontents

\renewcommand\theenumi{\roman{enumi}}
\thispagestyle{empty}
%\todo{Partially reconsider introduction (always write it last!)}
\newpage
\section{Introduction}
Although hyperbolic polynomials have their  origin  in PDE theory, they have during recent years  been studied in diverse areas such as 
control theory, optimization, real algebraic geometry, probability theory, computer science and combinatorics, see  \cite{Pem,R,Vin,Wag} and the references therein. To each hyperbolic polynomial is associated a closed convex (hyperbolicity) cone. Over the past 20 years methods have been developed to do optimization over hyperbolicity cones, which generalize semidefinite programming. A problem that has received considerable interest is the generalized Lax conjecture which asserts that each hyperbolicity cone is a linear slice of the cone of positive semidefinite matrices (of some size). Hence if the generalized Lax conjecture is true then hyperbolic programming is the same as semidefinite programming. 

Choe \emph{et al.} \cite{COSW} and Gurvits \cite{Gur} proved that hyperbolic polynomials give rise to a class of matroids,  called hyperbolic matroids or matroids with the weak half--plane property. The class of hyperbolic matroids properly  contains the class of matroids which are representable over the complex numbers, see \cite{COSW, WW}. This fact   was used by the second author \cite{B} to construct counterexamples to algebraic (stronger) versions of the generalized Lax conjecture. To better understand, and to identify potential counterexamples to the generalized Lax conjecture, it is therefore of interest to study hyperbolic matroids which are not representable over $\mathbb{C}$, or even better not representable over any (skew) field. However previous to this work essentially just two such matroids were known: The V\'amos matroid $V_8$ \cite{WW} and a generalization $V_{10}$ \cite{BVY}. In this paper we first show that the Non-Pappus and Non-Desargues matroids are hyperbolic (but not representable over any field) by utilizing  a known connection between hyperbolic polynomials and Euclidean Jordan algebras. Then we construct a family of hyperbolic matroids, Theorem \ref{genvamoshyp}, which are parametrized by uniform hypergraphs, and prove  that many of these matroids fail to be representable over any field, and more generally over any modular lattice. The proof of the main result is involved and uses several ingredients. In order to prove that the polynomials coming from our family of matroids are hyperbolic we need to prove that certain symmetric polynomials are nonnegative. The results obtained generalize and strengthen several inequalities in the literature, such as the Laguerre--Tur\'an inequality and Jensen's inequality. Finally we explore some consequences to algebraic versions of the generalized Lax conjecture.

\section{Hyperbolic and stable polynomials}

A homogeneous polynomial $h(\xx) \in \RR[x_1, \ldots, x_n]$ is \emph{hyperbolic} with respect to a vector 
$\ee \in \RR^n$ if $h(\ee) \neq 0$, and if for all $\xx \in \RR^n$ the univariate polynomial $t \mapsto h(t\ee -\xx)$ has only real zeros. Note that if $h$ is a hyperbolic polynomial of degree $d$, then we may write
\begin{align*}
\displaystyle h(t\mathbf{e} - \mathbf{x}) = h(\mathbf{e}) \prod_{j=1}^d (t - \lambda_j(\mathbf{x})), 
\end{align*} \noindent
where $$\lma(\xx)=\lambda_1(\mathbf{x}) \geq \cdots \geq \lambda_d(\mathbf{x})=\lmi(\xx)$$ are called the \emph{eigenvalues} of $\mathbf{x}$ (with respect to $\mathbf{e}$). The \emph{hyperbolicity cone} of $h$ with respect to $\mathbf{e}$ is the set $\Lambda_{+}(h, \mathbf{e}) = \{ \mathbf{x} \in \mathbb{R}^n : \lmi(\mathbf{x}) \geq 0 \}$. We usually abbreviate and write $\Lambda_{+}$ if there is no risk for confusion. We denote by $\Lambda_{++}$ the interior  $\Lambda_{+}$. 
\begin{example}\label{detta}
An important example of a hyperbolic polynomial is $\det(X)$, where $X = (x_{ij})_{i,j =1}^n$ is a matrix of variables where we impose $x_{ij} = x_{ji}$. Note that $t \mapsto \det(tI - X)$ where $I = \text{diag}(1,\dots, 1)$, is the characteristic polynomial of a symmetric matrix so it has only real zeros. Hence $\det(X)$ is a hyperbolic polynomial with respect to $I$, and its hyperbolicity cone is the cone of positive semidefinite matrices. 

The real linear space of complex hermitian matrices of size $n$ is  parametrized by matrices $X$ in $n^2$ variables, and as above it follows that 
$\det(X)$ is a hyperbolic polynomial. 
\end{example} \noindent

The next theorem which follows (see \cite{LPR}) from a theorem of Helton and Vinnikov \cite{HV} proved the Lax conjecture (after Peter Lax $1958$ \cite{Lax}). 
\begin{theorem}\label{heltonvinnikov}
Suppose that $h(x,y,z)$ is of degree $d$ and hyperbolic with respect to $e = (e_1,e_2,e_3)^T$. Suppose further that $h$ is normalized such that $h(e) = 1$. Then there are symmetric $d \times d$ matrices $A, B, C$ such that $e_1A+e_2B+ e_3C = I$ and
\begin{align*}
\displaystyle h(x,y,z) = \det(xA + yB + zC).
\end{align*}
\end{theorem}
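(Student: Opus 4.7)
The plan is to reduce the statement to the Helton--Vinnikov theorem on definite determinantal representations of \emph{real zero} polynomials on $\PP^2$, which I treat as a black box. Recall that a homogeneous polynomial $p \in \RR[x_1,x_2,x_3]$ is a \emph{real zero (RZ) polynomial} at $\ee$ if $p(\ee)=1$ and, for every $\xx \in \RR^3$, the univariate polynomial $t \mapsto p(\xx + t\ee)$ has only real zeros. Substituting $\xx \mapsto -\xx$ in this condition yields exactly the requirement that $t \mapsto h(t\ee - \xx)$ has only real zeros for all $\xx$, together with $h(\ee)=1$; so the hyperbolicity hypothesis on $h$ is literally the RZ condition at $\ee$. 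It therefore suffices to produce a definite real symmetric determinantal representation of an arbitrary ternary RZ polynomial.

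\emph{Key step: Helton--Vinnikov.} The main input is the theorem of \cite{HV}: every ternary RZ polynomial of degree $d$ admits a representation $h = \det(xA' + yB' + zC')$ with $A',B',C'$ real symmetric $d\times d$ matrices and $e_1A' + e_2B' + e_3C'$ positive definite. I would quote this as a black box, since it is the genuinely hard ingredient. The strategy is algebro-geometric and transcendental: one passes to the (possibly singular) real projective curve $X : \{h=0\} \subset \PP^2$ and its normalization $\tilde X$, a compact Riemann surface of arithmetic genus $g = \binom{d-1}{2}$ carrying a real structure. The RZ/hyperbolicity condition forces $\tilde X(\RR)$ to attain the maximal number of connected components (so $\tilde X$ is an $M$-curve) and pins down a distinguished real component of $\mathrm{Pic}^{g-1}(\tilde X)$. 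One then chooses a real theta characteristic in this component and writes down the entries of the matrices $A',B',C'$ in a basis of sections of an associated line bundle via explicit formulas involving Riemann theta functions on the Jacobian. The construction of this theta characteristic, and the verification that the resulting matrix pencil is definite in the direction of $\ee$, is where all the real work lies and would be the principal obstacle.

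\emph{Normalization and conclusion.} Granting the Helton--Vinnikov representation, set $M := e_1A' + e_2B' + e_3C'$. By construction $M$ is positive definite, and evaluating at $\xx = \ee$ gives $\det(M) = h(\ee) = 1$. Write $M = S^T S$ and define $A := S^{-T} A' S^{-1}$, $B := S^{-T} B' S^{-1}$, $C := S^{-T} C' S^{-1}$. These matrices are real symmetric, the identity $h(x,y,z) = \det(xA + yB + zC)$ is preserved because $\det(S)^2 = \det(M) = 1$, and $e_1 A + e_2 B + e_3 C = S^{-T} M S^{-1} = I$, completing the proof.
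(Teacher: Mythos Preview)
Your approach is correct and is, in fact, more detailed than the paper's own treatment: the paper does not prove this theorem at all but simply states it as a known result, citing \cite{HV} for the underlying Helton--Vinnikov theorem and \cite{LPR} for the deduction of the Lax conjecture in the form stated. Your reduction to the Helton--Vinnikov black box together with the conjugation by $S$ to normalize $e_1A'+e_2B'+e_3C'$ to the identity is exactly the argument of \cite{LPR}, so there is nothing to add.
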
 \noindent
\begin{remark}
The exact analogue of the Helton-Vinnikov theorem fails for $n>3$ variables. This may be seen by comparing dimensions. The space of polynomials on $\mathbb{R}^n$ of the form $\det(x_1A_1 + \cdots x_nA_n)$ with $A_i$ symmetric for $1 \leq i \leq n$, has dimension at most $n \binom{d}{2}$ whereas the space of hyperbolic polynomials on $\mathbb{R}^n$ has dimension $\binom{n+d-1}{d}$.   
\end{remark} \noindent
A convex cone in $\RR^n$ is \emph{spectrahedral} if it is of the form
\begin{align*}
\displaystyle \left \{ \mathbf{x} \in \mathbb{R}^n : \sum_{i = 1}^n x_i A_i \thickspace \text{ is positive semidefinite} \right \}
\end{align*} \noindent
where $A_i$, $i = 1, \dots, n$ are symmetric matrices such that there exists a vector $(y_1, \dots, y_n) \in \mathbb{R}^n$ with $\sum_{i=1}^n y_i A_i$ positive definite. It is easy to see that spectrahedral cones are hyperbolicity cones. A major open question asks if the converse is true. 
\begin{conjecture}[Generalized Lax conjecture \cite{HV,Vin}] \label{glc}
All hyperbolicity cones are spectrahedral.
\end{conjecture}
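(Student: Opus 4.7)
This is the famous Generalized Lax Conjecture, a major open problem; any proof plan is therefore genuinely speculative. Nevertheless, the natural template is set by Theorem~\ref{heltonvinnikov}: given a hyperbolic polynomial $h$ of degree $d$ with respect to $\ee$, normalized so that $h(\ee)=1$, the plan is to seek symmetric matrices $A_1,\ldots,A_n$ of some size $N$ and an auxiliary hyperbolic polynomial $q$ of degree $N-d$ with $q(\ee)=1$, satisfying
\begin{align*}
q(\xx)h(\xx) = \det\!\left(\sum_{i=1}^{n} x_i A_i\right), \qquad \sum_{i=1}^{n} e_i A_i = I,
\end{align*}
and such that $\Lambda_+(q,\ee)\supseteq \Lambda_+(h,\ee)$. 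This last containment is what guarantees that the spectrahedral cone associated with the $A_i$ equals $\Lambda_+(h,\ee)$ rather than a strictly smaller subset. The first step is to identify the \emph{padding polynomial} $q$; natural candidates include powers of $h$, iterated Renegar derivatives $D_\ee^k h$ (which are themselves hyperbolic with respect to $\ee$ and whose hyperbolicity cones contain $\Lambda_+(h,\ee)$), or factors extracted from known spectrahedral representations in fewer variables.

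The second step would be to produce an actual determinantal representation of the product $qh$. For $n=3$ the Helton--Vinnikov argument uniformizes the real zero curve of $h$ by theta functions on a compact Riemann surface of genus $\binom{d-1}{2}$; in $n\geq 4$ variables the zero locus is no longer a curve, and one would instead aim either for theta-function analogues on higher-dimensional abelian varieties, or for sum-of-squares certificates arising from the description $\Lambda_+(h,\ee)=\{\xx : h(\xx+t\ee)\neq 0 \text{ for all } t>0\}$ via a suitable Positivstellensatz. A complementary inductive route would start from spectrahedral representations of a generic two-variable restriction of $h$, guaranteed by Theorem~\ref{heltonvinnikov}, and attempt to lift them to all of $\RR^n$ while controlling the enlargement of the cone at each step.

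The main obstacle is, unambiguously, the second step, and its severity is already visible in the remark after Theorem~\ref{heltonvinnikov}: the space of $n$-variable polynomials admitting a fixed-size symmetric determinantal representation has dimension $O(nd^2)$, whereas the space of hyperbolic polynomials has dimension $\binom{n+d-1}{d}$, which for $n\geq 4$ grows strictly faster in $d$. Hence the padding $q$ must absorb this large gap while simultaneously being hyperbolic with a cone containing $\Lambda_+(h,\ee)$, and no systematic construction of such a $q$ is known. Compounding the difficulty, several algebraic strengthenings of the conjecture are in fact false, as the present paper emphasizes via its non-representable hyperbolic matroids. A successful proof would therefore have to exploit a genuinely analytic or positivity-theoretic feature of hyperbolicity that is invisible at the level of matroid representability.
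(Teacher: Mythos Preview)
The statement labeled \ref{glc} is a \emph{conjecture}, not a theorem: the paper does not prove it, nor does it claim to. It is stated as a major open problem, and the paper's contribution is in the opposite direction---it constructs hyperbolic polynomials (via non-representable hyperbolic matroids) that place constraints on what any putative proof could look like, and in particular rules out certain algebraic strengthenings. There is therefore no ``paper's own proof'' to compare your proposal against.

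You recognize this yourself in your opening sentence, and what you have written is not a proof but a survey of plausible strategies (padding by Renegar derivatives, lifting Helton--Vinnikov representations, Positivstellensatz certificates) together with an honest account of the obstructions. That is entirely appropriate for a conjecture of this stature. The only caveat is one of framing: since the paper never asserts the conjecture as proved, a ``proof proposal'' for it cannot be evaluated for correctness in the usual sense. Your discussion is a reasonable research outline, but it should not be mistaken for---and you do not present it as---a proof.
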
 \noindent
We may reformulate Conjecture \ref{glc} as follows, see \cite{HV,Vin}. The hyperbolicity cone of $h(\mathbf{x})$ with respect to $\mathbf{e} = (e_1, \dots ,e_n)$ is spectrahedral if there is a homogeneous polynomial $q(\mathbf{x})$ and real symmetric matrices $A_1, \ldots, A_n$ of the same size such that
\begin{align}\label{qhd}
\displaystyle q(\mathbf{x})h(\mathbf{x}) = \det \left ( \sum_{i = 1}^n x_i A_i \right )
\end{align} \noindent
where $\Lambda_{++}(h, \mathbf{e}) \subseteq \Lambda_{++}(q, \mathbf{e})$ and $\sum_{i=1}^n e_iA_i$ is positive definite. 
\begin{itemize}
\item Conjecture \ref{glc} is true for $n=3$ by Theorem \ref{heltonvinnikov},
\item Conjecture \ref{glc} is true for homogeneous cones \cite{Chua}, i.e., cones for which the automorphism group acts transitively on its interior, 
\item Conjecture \ref{glc} is true for quadratic polynomials, see e.g. \cite{NT}, 
\item Conjecture \ref{glc} is true for elementary symmetric polynomials, see \cite{Bel},
\item Weaker versions of Conjecture \ref{glc} are true for smooth hyperbolic polynomials, see \cite{Kum2, NS}. 
\item Stronger algebraic versions of Conjecture \ref{glc} are false, see \cite{B}. 
\end{itemize}

A class of polynomials which is intimately connected to hyperbolic polynomials is the class of stable polynomials. Below we will collect a few facts about stable polynomials that will be needed in forthcoming sections. 
A polynomial $P(\xx) \in \mathbb{C}[x_1, \dots, x_n]$ is \emph{stable} if $P(z_1,\ldots,z_n) \neq 0$ whenever $\Im(z_j)>0$ for all $1\leq j \leq n$. Stable polynomials satisfy the following basic closure properties, see e.g. \cite{Wag}. 
\begin{lemma}\label{closureprop}
Let $P(x_1, \dots, x_n)$ be a stable polynomial of degree $d_i$ in $x_i$ for $i = 1, \dots, n$. Then for all $i = 1, \dots, n$ we have
\begin{enumerate}
\item Specialization: $P(x_1, \dots, x_{i-1}, \zeta, x_{i+1}, \dots, x_n)$ is stable or identically zero for each $\zeta \in \mathbb{C}$ with $\Im(\zeta) \geq 0$.
\item Scaling: $P(x_1, \dots, x_{i-1}, \lambda x_i, x_{i+1}, \dots, x_n)$ is stable for all $\lambda > 0$.
\item Inversion: $x_i^{d_i} P(x_1, \dots, x_{i-1}, - x_i^{-1}, x_{i+1}, \dots, x_n)$ is stable.
\item Permutation: $P(x_{\sigma(1)}, \dots, x_{\sigma(n)})$ is stable for all $\sigma \in \mathfrak{S}_n$.
\item Differentiation: $\partial/ \partial x_i P(x_1, \dots, x_n)$ is stable.
\end{enumerate}
\end{lemma}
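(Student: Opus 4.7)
The plan is to verify the five properties from the definition of stability together with two classical tools: Hurwitz's theorem on limits of nonvanishing holomorphic functions, and the Gauss--Lucas theorem locating the roots of a derivative in the convex hull of the roots of the polynomial.

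Properties (2) and (4) are essentially tautological: the map $z \mapsto \lambda z$ with $\lambda > 0$ preserves the open upper half plane $H$, and the stability condition is symmetric in the variables. For (1), the case $\Im(\zeta)>0$ is immediate from the definition. When $\zeta \in \mathbb{R}$, I would approach it via $\zeta_k = \zeta + (i/k)$: each $P(\ldots,\zeta_k,\ldots)$ is stable by the previous case, and the sequence converges uniformly on compact subsets of $\mathbb{C}^{n-1}$ to $P(\ldots,\zeta,\ldots)$, so Hurwitz's theorem on the connected open set $H^{n-1}$ forces the limit to be either nonvanishing there or identically zero. For (3), note that $z \mapsto -1/z$ carries $H$ onto itself; multiplication by $x_i^{d_i}$ clears denominators and produces an honest polynomial, which is nonvanishing on $H^n$ because $x_i^{d_i} \neq 0$ and, by stability of $P$, the value of $P$ with $x_i$ replaced by $-1/x_i$ is nonzero.

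The substantive step is (5). Fix $a_j$ with $\Im(a_j)>0$ for $j\neq i$ and regard
\[
Q(t) := P(a_1,\ldots,a_{i-1},t,a_{i+1},\ldots,a_n)
\]
as a univariate polynomial in $t$. Stability of $P$ places every root of $Q$ into the closed lower half plane, and Gauss--Lucas then places the roots of $Q'$ into the convex hull of those roots, which remains inside $\{\Im \le 0\}$. Hence $Q'(t)\neq 0$ whenever $\Im(t)>0$, which is the desired stability of $\partial P/\partial x_i$.

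The only real obstacle is the degenerate possibility that $Q$ collapses to a constant, in which case $Q'\equiv 0$ and the Gauss--Lucas step is vacuous. I would dispose of this by arranging the order of proof so that (1) and (3) are already available at this point: applying the inversion in (3) and then specializing $x_i = 0$ via (1) shows that the leading $x_i$-coefficient of $P$ (viewed as a polynomial in $x_{\neq i}$) is itself stable and therefore nonvanishing on $H^{n-1}$. Consequently $Q$ has $t$-degree exactly $d_i$ for every $a_{\neq i} \in H^{n-1}$, and the constant case is impossible as soon as $d_i \ge 1$ (while if $d_i=0$, the derivative is identically zero and there is nothing to check). This interlocking of the items, so that earlier ones secure the degree-drop gap in (5), is the main (though mild) delicacy in the argument.
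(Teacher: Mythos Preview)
The paper does not supply its own proof of this lemma; it merely cites Wagner's survey \cite{Wag} for these standard closure properties. Your argument is correct and is essentially the classical one found in that reference and elsewhere: (ii) and (iv) are immediate from the definition, (iii) from the fact that $z\mapsto -1/z$ is a self-map of the open upper half-plane, (i) from Hurwitz's theorem applied to the approximants $\zeta+i/k$, and (v) from Gauss--Lucas applied to the univariate specialization. Your handling of the degree-drop issue in (v)---using (iii) then (i) to show the leading $x_i$-coefficient is itself stable and hence nonvanishing on $H^{n-1}$---is exactly the standard way to close that gap. One small remark: as stated in the paper, item (v) omits the caveat ``or identically zero,'' so strictly speaking it fails when $d_i=0$; your observation that this case is vacuous is the right reading, and the imprecision lies in the lemma's phrasing rather than in your proof.
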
 \noindent
Hyperbolic and stable polynomials are related as follows, see \cite[Prop. 1.1]{PLMS} and \cite[Thm. 6.1]{COSW}. 
\begin{lemma}
\label{hypbas}
Let $P \in \mathbb{R}[x_1, \dots, x_n]$ be a homogenous polynomial. Then $P$ is stable if and only if $P$ is hyperbolic with $\mathbb{R}_+^n \subseteq \Lambda_+$. 

Moreover all non-zero Taylor coefficients of a homogeneous and stable polynomial have the same phase, i.e., the quotient of any two non-zero coefficients is a positive real number.
\end{lemma}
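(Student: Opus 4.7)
The plan is to prove the equivalence by chasing zeros between the upper polydisc and the univariate direction, and then deduce the same-phase statement by iterated differentiation down to linear forms.

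For the forward direction, suppose $P$ is stable and homogeneous of degree $d$. Fix any $\ee \in \RR_{++}^n$. Since $i\ee$ has all coordinates in the open upper half plane, stability gives $P(i\ee)\neq 0$, and homogeneity yields $P(\ee) = i^{-d}P(i\ee)\neq 0$. For arbitrary $\xx\in\RR^n$, any complex root $t_0 = a+bi$ with $b>0$ of $t\mapsto P(t\ee-\xx)$ would produce a zero of $P$ at $t_0\ee-\xx$, whose imaginary part is $b\ee\in\RR_{++}^n$; this contradicts stability. For $b<0$, homogeneity $P(t_0\ee-\xx)=(-1)^d P(-t_0\ee+\xx)$ reduces to the previous case. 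Hence $P$ is hyperbolic with respect to $\ee$. For $\xx\in\RR_{++}^n$ one argues similarly that no negative real root is possible: a root $t_0<0$ would place $t_0\ee-\xx$ in $-\RR_{++}^n$, where $P$ is nonvanishing (by the same argument applied to $\xx-t_0\ee\in\RR_{++}^n$). Taking closures gives $\RR_+^n\subseteq\Lambda_+$.

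For the reverse direction, suppose $P$ is hyperbolic with respect to some $\ee$ and $\RR_+^n\subseteq\Lambda_+$. A standard fact about hyperbolic polynomials is that $P$ is hyperbolic with respect to every direction in the open cone $\Lambda_{++}$, so in particular with respect to every $\yy\in\RR_{++}^n$. Given $\zz\in\CC^n$ with $\Im\zz\in\RR_{++}^n$, write $\zz=\xx+i\yy$ with $\yy\in\RR_{++}^n$. The univariate polynomial $Q(t):=P(\xx+t\yy)$ has leading coefficient $P(\yy)\neq 0$ and only real roots (hyperbolicity with respect to $\yy$), so $Q(i)=P(\zz)\neq 0$, which is stability.

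For the same-phase statement, I would iteratively apply Lemma \ref{closureprop}(v): each partial $\partial^\beta P$ is stable, and for $|\beta|=d-1$ it is the linear form $\sum_{j}(\beta_j+1)c_{\beta+e_j}x_j$. The key sub-lemma is that a stable linear form $L(\zz)=\sum_j b_j z_j$ must have all nonzero $b_j$ lying on a common complex ray from the origin: otherwise, after reducing to two variables by the specialization $z_\ell=0$ for $\ell\neq j,k$ (which preserves stability), the equation $b_j z_j+b_k z_k=0$ is solvable with both $z_j,z_k$ in the open upper half plane, a direct planar computation. Applying this to each $\partial^\beta P$ gives that any two ``adjacent'' nonzero coefficients $c_{\beta+e_j},c_{\beta+e_k}$ have positive real ratio. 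To upgrade this to any pair $c_\alpha,c_{\alpha'}$, I would chain adjacent swaps $e_j-e_k$ within the support of $P$. The main obstacle is precisely this chaining step: one must show that the support of a homogeneous stable polynomial is connected under single swaps. I would prove it by induction on $d$: if $\alpha$ and $\alpha'$ share a coordinate $j$ with $\alpha_j,\alpha'_j>0$, apply the inductive hypothesis to the stable polynomial $\partial_j P$ in degree $d-1$; the remaining disjoint-support case is handled by specializing away variables to reduce to a bivariate stable polynomial and invoking the fact that the support of such a polynomial is a contiguous interval in $\{0,\ldots,d\}$, which provides the needed bridge coefficient.
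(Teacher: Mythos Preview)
The paper does not give a proof of this lemma; it simply cites \cite[Prop.~1.1]{PLMS} for the equivalence and \cite[Thm.~6.1]{COSW} for the same--phase statement. So there is no in--paper argument to compare against.

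Your proof of the equivalence is correct and is the standard one. For the same--phase part, your strategy of differentiating down to linear forms and then chaining adjacent swaps through the support is also correct and is essentially the route taken in \cite{COSW}. The one step you flag as the ``main obstacle'' --- connectivity of the support under single swaps --- does go through along the lines you indicate, but your handling of the disjoint--support case needs one extra remark. When you identify the variables in $S=\operatorname{supp}(\alpha)$ to $y$ and those in $S'=\operatorname{supp}(\alpha')$ to $z$ to obtain a bivariate homogeneous stable polynomial $Q(y,z)=\sum_k b_k y^k z^{d-k}$, you must know $b_d\neq 0$ and $b_0\neq 0$ before invoking the interval property; a priori $b_d$ is a \emph{sum} of several coefficients $c_\gamma$ and could cancel. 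This is not circular: $b_d$ equals $P'(\mathbf{1})$, where $P'$ is the (nonzero, homogeneous, stable) polynomial obtained from $P$ by setting $x_i=0$ for $i\notin S$, and by the first part of the lemma $P'$ is hyperbolic with $\mathbf{1}$ in its open cone, so $P'(\mathbf{1})\neq 0$. With $b_0,b_d\neq 0$ in hand, the factorization $Q(y,1)=b_d\prod(y+s_i)$ with all $s_i>0$ gives $b_{d-1}\neq 0$, yielding a bridge monomial $\gamma$ with one foot in $S'$ and the rest in $S$; since $\operatorname{supp}(\alpha)=S$ and $\operatorname{supp}(\alpha')=S'$, the pair $(\gamma,\alpha)$ and the pair $(\gamma,\alpha')$ each share a positive coordinate, and your induction on $d$ finishes the job.
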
 \noindent
\begin{lemma}[Lemma 4.3 in \cite{B}]
\label{hypstab}
If $h \in \mathbb{R}[y_1, \dots, y_n]$ is a hyperbolic polynomial, $\vv_1, \dots, \vv_m \in \Lambda_+$ and $\vv_0 \in \mathbb{R}^n$, then the polynomial
\begin{align*}
\displaystyle P(\xx) = h(\vv_0 + x_1\vv_1 + \cdots + x_m \vv_m)
\end{align*} \noindent
is either identically zero or stable.
\end{lemma}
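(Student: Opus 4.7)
The plan is to verify stability directly: assuming $P\not\equiv 0$, I would show $P(\xx+i\yy)\neq 0$ for every $\xx\in\RR^m$ and every $\yy\in\RR^m$ with strictly positive entries. Writing $\uu=\vv_0+\sum_j x_j\vv_j$ and $\ww=\sum_j y_j\vv_j$, we have $P(\xx+i\yy)=h(\uu+i\ww)$, and while $\ww\in\Lambda_+$, it may lie on the boundary, so the hyperbolicity of $h$ is not immediately usable at $\ww$. My strategy is to perturb the $\vv_j$ into the interior of $\Lambda_+$, establish stability there, and then pass to the limit via the multivariate Hurwitz theorem.

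Pick any $\ee\in\Lambda_{++}$ and set $\vv_j^{(\epsilon)}=\vv_j+\epsilon\ee\in\Lambda_{++}$ for $\epsilon>0$. Define
\[
P_\epsilon(\xx)=h\Bigl(\vv_0+\sum_{j=1}^m x_j\vv_j^{(\epsilon)}\Bigr).
\]
The key observation is that for any $\yy$ with $y_j>0$, the vector $\ww_\epsilon=\sum_j y_j\vv_j^{(\epsilon)}=\ww+\epsilon\bigl(\sum_j y_j\bigr)\ee$ lies in $\Lambda_{++}$, being the sum of an element of $\Lambda_+$ and a strictly positive multiple of $\ee\in\Lambda_{++}$. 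Hence $h$ is hyperbolic with respect to $\ww_\epsilon$, so the univariate polynomial $t\mapsto h(\uu_\epsilon+t\ww_\epsilon)$, with $\uu_\epsilon=\vv_0+\sum_j x_j\vv_j^{(\epsilon)}$, has leading coefficient $h(\ww_\epsilon)\neq 0$ and only real roots; in particular $t=i$ is not a root, so $P_\epsilon(\xx+i\yy)=h(\uu_\epsilon+i\ww_\epsilon)\neq 0$. Thus $P_\epsilon$ is stable.

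The coefficients of $P_\epsilon$ depend polynomially on $\epsilon$ and have uniformly bounded degree, so $P_\epsilon\to P$ coefficient-wise, and therefore uniformly on compact sets, as $\epsilon\to 0^+$. The multivariate Hurwitz theorem (a uniform-on-compacta limit of nowhere-vanishing holomorphic functions on a connected open domain is either nowhere-vanishing or identically zero), applied to the connected polydomain $\{(z_1,\dots,z_m):\Im z_j>0\text{ for all }j\}$, then forces $P$ to be stable or identically zero.

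The only potentially delicate point is the Hurwitz passage to the limit, but this is standard (see e.g.\ the survey of Wagner \cite{Wag}). The substance of the argument lies in the perturbation moving $\ww$ into the interior of the hyperbolicity cone, where the univariate real-rootedness afforded by the hyperbolicity of $h$ does the rest.
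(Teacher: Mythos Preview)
Your argument is correct. The paper does not actually supply a proof of this lemma; it is quoted as Lemma~4.3 of \cite{B} and used as a black box. Your proof is the standard one (and is essentially the argument in \cite{B}): perturb each $\vv_j$ into the open cone so that $\sum_j y_j\vv_j^{(\epsilon)}\in\Lambda_{++}$, use G\aa rding's theorem (Theorem~\ref{hypfund}(iv)) to get hyperbolicity with respect to this interior direction, conclude that the univariate restriction has only real zeros and hence does not vanish at $t=i$, and finish with Hurwitz. Nothing is missing.
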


\section{Hyperbolic polymatroids}
We refer to \cite{O} for undefined matroid terminology.
The connection between hyperbolic/stable polynomials and matroids was first realized in \cite{COSW}. A polynomial is \emph{multiaffine} provided that each variable occurs at most to the first power. Choe \emph{et al.} \cite{COSW} proved that if 
\begin{equation}\label{Pbases}
P(\xx)= \sum_{B \subseteq [m]} a(B) \prod_{i \in B}x_i \in \CC[x_1,\ldots, x_m]
\end{equation}
is a homogeneous, multiaffine and stable polynomial, then its \emph{support} $$\BB=\{B : a(B) \neq 0\}$$ is the set of bases of a matroid, $\MM$, on $[m]$. Such matroids are called \emph{weak half--plane property matroids} (abbreviated WHPP--matroids). If further $P(\xx)$ can be chosen so that $a(B) \in \{0,1\}$, then $\MM$ is called a \emph{half--plane property matroid} (abbreviated HPP--matroid). If so, then $P(\xx)$ is the \emph{bases generating polynomial} of $\MM$. 
\begin{itemize}
\item All matroids representable over $\CC$ are WHPP,  \cite{COSW}. 
\item A binary matroid is WHPP if and only if it is HPP if and only if it is   regular, \cite{BG, COSW}. 
\item No finite projective geometry $\rm{PG}(r,n)$ is WHPP, \cite{BG, COSW}. 
\item The V\'amos matroid $V_8$ is HPP (but not representable over any field), \cite{WW}. 
\end{itemize} \noindent

We shall now see how weak half-plane property matroids may conveniently be described in terms of hyperbolic polynomials. 

Let $E$ be a finite set. A \emph{polymatroid} is a function $r : 2^E \rightarrow \NN$ satisfying 
\begin{enumerate}
\item $r(\emptyset)=0$,
\item $r(S) \leq r(T)$ whenever $S \subseteq T \subseteq E$, 
\item $r$ is \emph{semimodular}, i.e., 
$$r(S)+r(T) \geq r(S\cap T) +r(S\cup T),$$ 
for all $S,T \subseteq E$. 
\end{enumerate}
Recall that rank functions of matroids on $E$ coincide  polymatroids $r$ on $E$ with $r(\{i\}) \leq 1$ for all $i \in E$. 

 Let $\VV = (\vv_1, \ldots, \vv_m)$ be a tuple of vectors in 
$\Lambda_+(h,\ee)$, where $\ee \in \RR^n$. The \emph{(hyperbolic) rank}, $\rk(\xx)$, of $\xx \in \RR^n$ is defined to be the number of non-zero eigenvalues of $\xx$, i.e., 
$
\rk(\xx)= \deg h(\ee +t\xx).
$
Define a function $r_\VV : 2^{[m]} \rightarrow \NN$, where $[m]:=\{1,2,\ldots, m\}$, by 
$$
r_\VV(S) = \rk\left(\sum_{i \in S}\vv_i\right). 
$$
It follows from \cite{Gur} (see also \cite{B}) that $r_\VV$ is a polymatroid. We call such polymatroids \emph{hyperbolic polymatroids}. Hence if the vectors in $\VV$ have rank at most one, then we obtain the hyperbolic  rank function of a \emph{hyperbolic matroid}. 
\begin{example}
Let $A_1=\uu_1 \uu_1^*, \ldots, A_m=\uu_m \uu_m^*$ be PSD matrices of rank at most one in $\CC^n$. By Example \ref{detta} the function 
$r : 2^{[m]} \rightarrow \NN$ defined by 
$$
r(S) = \rk\left(\sum_{i \in S}A_i\right)
$$
is the rank function of a hyperbolic matroid. It is not hard to see that $r(S)$ is equal to the dimension of the subspace of $\CC^n$ spanned by $\{\uu_i : i \in S\}$. Hence $r$ is the rank function of the linear matroid defined by $\uu_1,\ldots, \uu_m$. 
\end{example}
\begin{proposition}
\label{whpphyp}
A matroid is hyperbolic if and only it has the weak half--plane property. 
\end{proposition}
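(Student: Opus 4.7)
The plan is to move between the two characterizations by explicitly constructing, from the data of one side, the data required for the other, with Lemmas~\ref{hypbas} and~\ref{hypstab} serving as the two bridges.

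For the implication WHPP $\Rightarrow$ hyperbolic, I would start from a WHPP matroid $\MM$ on $[m]$ with bases generating polynomial
\[
P(\xx) = \sum_{B \in \BB} a(B) \prod_{i \in B} x_i,
\]
which is homogeneous of degree $d = r_\MM([m])$, multiaffine, and stable. Lemma~\ref{hypbas} gives that $P$ is hyperbolic with $\mathbb{R}_+^m \subseteq \Lambda_+(P)$. Set $h := P$, $\ee := \one \in \mathbb{R}_{++}^m$, and let $\vv_i$ be the $i$-th standard basis vector of $\mathbb{R}^m$. Each $\vv_i$ lies in $\Lambda_+(h)$ and has hyperbolic rank at most one, since $P$ is multiaffine. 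For any $S \subseteq [m]$,
\[
P(\one + t\one_S) = \sum_{B \in \BB} a(B)(1+t)^{|B \cap S|},
\]
a polynomial in $t$ of degree $\max_{B \in \BB} |B \cap S| = r_\MM(S)$; its leading coefficient is nonzero because all nonzero $a(B)$ share the same phase by Lemma~\ref{hypbas}. Thus $r_\VV(S) = r_\MM(S)$ for every $S$, and the resulting hyperbolic matroid coincides with $\MM$.

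For the converse, given data $(h, \ee, \vv_1, \ldots, \vv_m)$ realizing $\MM$ as a hyperbolic matroid (with $d := \deg h = r_\VV([m])$), I would consider
\[
P(\xx) := h(x_1 \vv_1 + \cdots + x_m \vv_m),
\]
which is stable by Lemma~\ref{hypstab} and homogeneous of degree $d$. To verify multiaffinity, fix $i \in [m]$ and observe that for $x_j > 0$ and $\epsilon > 0$ the vector $\uu := \epsilon \ee + \sum_{j \neq i} x_j \vv_j$ lies in $\Lambda_{++}(h)$; by invariance of the hyperbolic rank under change of reference direction inside $\Lambda_{++}(h)$, the univariate polynomial $x_i \mapsto h(\uu + x_i \vv_i)$ has degree $\rk_h(\vv_i) \leq 1$, and letting $\epsilon \to 0$ transfers this bound to $\deg_{x_i} P \leq 1$. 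For a $d$-subset $B \subseteq [m]$, the coefficient of $\prod_{i \in B}x_i$ in $P$ equals $P(\one_B) = h\bigl(\sum_{i \in B}\vv_i\bigr)$, which by homogeneity is the coefficient of $t^d$ in $h\bigl(\ee + t\sum_{i \in B}\vv_i\bigr)$, and therefore is nonzero precisely when $r_\VV(B) = d$, i.e.\ when $B$ is a basis of $\MM$. Hence the support of $P$ is exactly $\BB$, and $\MM$ is WHPP.

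The main obstacle is the multiaffinity step in the converse direction, which rests on the rank-invariance of hyperbolic polynomials — that $\rk_h(\xx)$ is independent of the choice of $\ee \in \Lambda_{++}(h)$ — combined with the perturbation pushing a conic combination of the $\vv_j$'s into $\Lambda_{++}(h)$. A secondary subtlety occurs if $r_\VV([m]) < \deg h$: the polynomial $P$ defined above then vanishes identically, and one first replaces $h$ by a suitable $\bigl(\deg h - r_\VV([m])\bigr)$-fold directional derivative along $\ee$, which is again hyperbolic with the same cone and produces a multiaffine stable polynomial of the correct degree.
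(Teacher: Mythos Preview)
Your argument is correct and follows essentially the same route as the paper's proof. The only packaging difference is in the converse direction: the paper introduces an auxiliary variable $x_0$ via $g(x_0,\ldots,x_m)=h(x_0\ee+\sum x_i\vv_i)$ and extracts the coefficient $g_r$ of $x_0^{d-r}$, whereas you handle the case $r<\deg h$ by replacing $h$ with its $(d-r)$-fold directional derivative along $\ee$---but since that coefficient \emph{is} (up to a constant) $(D_\ee^{\,d-r}h)(\sum x_i\vv_i)$, the two constructions produce the same stable multiaffine polynomial.
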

\begin{proof}
Suppose $\BB$ is the set of bases of a matroid, $\MM $, with the weak half--plane property realized by \eqref{Pbases}. By Lemma \ref{hypbas} we may assume that $a(B)$ is a  nonnegative real number for all $B \subseteq [m]$. Then $P(\xx)$ is hyperbolic with hyperbolicity cone containing the positive orthant by Lemma \ref{hypbas}. Let $\VV=(\delta_1, \ldots, \delta_m)$, be the standard basis of $\RR^m$, and let $\one =(1,\ldots,1) \in \RR^m$ be the all ones vector. Then 
\begin{align*}
r_\VV(S) &= \rk\left(\sum_{i \in S}\delta_i\right) 
= \deg P\left(\one+t\sum_{i \in S}\delta_i\right) \\
&= \deg \sum_{B}a(B)(1+t)^{|B\cap S|} = \max\{ |B \cap S| : B \in \BB\}, 
\end{align*}
and hence $r_\VV$ is the rank function of $\MM$. 

Assume $h$ is hyperbolic and $r_\VV$, where $\VV=(\vv_1,\ldots, \vv_m) \in \Lambda_+(h,\ee)^m$, is the rank function of a hyperbolic matroid of rank $r$. We may assume $h(\ee)>0$. The polynomial $g(x_0,x_1, \ldots, x_m)= h(x_0\ee+x_1\vv_1+\cdots+x_m\vv_m)$ is stable by Lemma \ref{hypstab} and has nonnegative coefficients only by Lemma \ref{hypbas}. Since $\vv_i$ has rank at most one for each $i$ we see that $g$ has degree at most one in $x_i$ for all $i \geq 1$. It follows that 
$$
g(\xx)= x_0^{d-r} \sum_{i=0}^r g_i(x_1,\ldots, x_m)x_0^{r-i},
$$
where $g_i(\xx)$ is a homogeneous and multiaffine polynomial of degree $i$ for $0 \leq i \leq r\leq d=\deg h$. By dividing by $x_0^{d-r}$ and setting $x_0=0$, we see that $g_r(\xx)$ is a stable by Lemma \ref{closureprop}. Moreover $B$ is a basis of the matroid defined by $\VV$ if and only if $|B|=r$ and $g(\delta_0 + t\sum_{i \in B}\delta_i)$ has degree $d$. This happens if and only if $g_r(\sum_{i \in B}\delta_i) \neq 0$, that is, if and only if $B$ is in the support of $g_r(\xx)$. 
\end{proof} \noindent
\section{Projections and face lattices of hyperbolicity cones}
Let $C$ be a closed convex cone in $\RR^n$. If $\xx, \yy \in C$ and $\yy-\xx \in C$, we write $\xx \leq \yy$. 
Recall that a \emph{face} $F$ of a convex cone $C$ is a convex subcone of $C$ with the property that 
$\xx, \yy \in C$, $\xx \leq \yy$ and $\yy \in F$ implies $\xx \in F$. Equivalently a face is a convex subcone of $C$ such that for each open line segment in $C$ that intersects $F$, the closure of the segment is contained in $F$. The collection of all faces of $C$ is a lattice, $L(C)$, under containment with smallest element $\{0\}$ and largest element $C$. Clearly $F\wedge G= F\cap G$ and $F \vee G = \bigcap_{H}H$, where $H$ ranges over all faces containing $F$ and $G$. The collection of all relative interiors of faces of $C$ partitions $C$. If $F_\xx$ is the unique face that contains $\xx \in C$ in its relative interior, then $F_\xx \vee F_\yy = F_{\xx+\yy}$. See \cite{Rock} for more on the face lattices of convex cones. 

The \emph{rank} of a face $F$ of the hyperbolicity cone $\Lambda_+$ is defined by 
$$
\rk(F)= \max_{\xx \in F} \rk(\xx).
$$ \noindent
Note that if $L(\Lambda_+)$ is a graded lattice, then the above hyperbolic rank function is not necessarily the rank function of $L(\Lambda_+)$. 
\begin{lemma}[Thm 26, \cite{R}] \label{rkint}
Let $F$ be a face of $\Lambda_+$ and let $\xx \in F$. Then 
$\rk(\xx) = \rk(F)$  if and only if $\xx$ is in the relative interior of $F$.  
\end{lemma}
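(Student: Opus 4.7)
The plan is to deduce the lemma by combining rank monotonicity along the cone order with the partition of $\Lambda_+$ into relative interiors of its faces and the identity $F_{\xx+\yy} = F_\xx \vee F_\yy$ recorded earlier.

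First I would establish the monotonicity $\rk(\xx) \leq \rk(\xx+\zz)$ for every $\xx,\zz \in \Lambda_+$. The homogeneous polynomial
\[
G(x_0,x_1,x_2) \;:=\; h(x_0\ee + x_1\xx + x_2\zz)
\]
is stable by Lemma~\ref{hypstab}, and by Lemma~\ref{hypbas} its Taylor coefficients $c_{ijk}$ may be taken nonnegative. Direct expansion then yields
\begin{align*}
\rk(\xx) \;&=\; \deg_t G(1,t,0) \;=\; \max\{j : c_{d-j,j,0}>0\}, \\
\rk(\xx+\zz) \;&=\; \deg_t G(1,t,t) \;=\; \max\{j+k : c_{d-j-k,j,k}>0\},
\end{align*}
and the latter quantity obviously dominates the former. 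For the forward direction I would then fix $\xx \in \operatorname{relint}(F)$ and, for any $\yy \in F$, use the standard property of the relative interior of a convex cone to find $\epsilon>0$ with $\xx-\epsilon\yy \in F \subseteq \Lambda_+$. Writing $\xx = (\xx-\epsilon\yy) + \epsilon\yy$ and invoking monotonicity yields $\rk(\xx) \geq \rk(\yy)$; choosing $\yy$ to realize $\rk(F)$ and combining with the trivial reverse inequality gives $\rk(\xx) = \rk(F)$. In particular $\rk$ takes the constant value $\rk(F)$ throughout $\operatorname{relint}(F)$.

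For the converse, suppose $\xx \in F$ with $\rk(\xx) = \rk(F)$. Since $\Lambda_+$ is partitioned by the relative interiors of its faces, $\xx$ lies in $\operatorname{relint}(F_\xx)$ for a unique minimal face $F_\xx \subseteq F$, and the forward direction applied to $F_\xx$ gives $\rk(F_\xx) = \rk(\xx) = \rk(F)$. The proof then reduces to the strict monotonicity $F' \subsetneq F \Rightarrow \rk(F') < \rk(F)$ along the face lattice of $\Lambda_+$, which is the technical heart of the argument and the main obstacle. The standard approach uses the family of Renegar derivative polynomials $D_\ee^{k} h$ (hyperbolic with respect to $\ee$ of degree $d-k$, with hyperbolicity cones $\Lambda_+^{(k)} \supseteq \Lambda_+$) together with the characterization $\rk(\xx) = d - \min\{k : D_\ee^{k} h(\xx) \neq 0\}$: since $D_\ee^{d-\rk(F)} h$ is positive on $\operatorname{int}(\Lambda_+^{(d-\rk(F))})$ and vanishes on its boundary, the points of $F$ attaining maximal rank are exactly those in $F \cap \operatorname{int}(\Lambda_+^{(d-\rk(F))})$, and a careful analysis of the interplay between the face lattices of these derivative cones and the face lattice of $\Lambda_+$ produces the required strict drop. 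With strict monotonicity in hand, $\rk(F_\xx) = \rk(F)$ forces $F_\xx = F$, whence $\xx \in \operatorname{relint}(F)$.
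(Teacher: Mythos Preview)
The paper does not prove this lemma; it is quoted as Theorem~26 of Renegar~\cite{R} without argument. So there is no in--paper proof to compare against, and I can only assess your attempt on its own.

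Your forward direction is correct and self--contained: the monotonicity $\rk(\xx)\le\rk(\xx+\zz)$ via the nonnegative coefficients of the stable polynomial $G$ is clean, and the passage from $\xx\in\operatorname{relint}(F)$ to $\rk(\xx)=\rk(F)$ is standard.

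The converse has a genuine gap. You correctly reduce to the strict monotonicity $F'\subsetneq F\Rightarrow\rk(F')<\rk(F)$ and correctly identify the maximal--rank locus in $F$ as $F\cap\operatorname{int}\bigl(\Lambda_+^{(k)}\bigr)$ with $k=d-\rk(F)$; but the sentence ``a careful analysis of the interplay between the face lattices \ldots\ produces the required strict drop'' is not a proof. What is missing is exactly the step showing this relatively open subset of $F$ lies inside $\operatorname{relint}(F)$, and this is the whole content of the converse. Since the lemma is itself cited from Renegar, deferring the hard step to Renegar--style machinery is circular.

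One way to close the gap with only the tools the paper has: take $\yy\in\operatorname{relint}(F)$ and $\xx\in F$ with $\rk(\xx)=\rk(\yy)=r$; form $g(u,s,t)=h(u\ee+s\xx+t\yy)$. Using your forward direction, $\rk(a\xx+b\yy)=r$ for $a,b>0$, and by nonnegativity of the coefficients one may write $g=u^{d-r}g_0$ with $g_0$ stable of degree $r$ (exactly the factorization used in the proof of Lemma~\ref{faceproj}). The hypothesis $\rk(\xx)=r$ gives $g_0(0,1,0)\ne 0$, so $(0,1,0)$ is not on $\partial\Lambda_+(g_0)$ and hence lies in $\Lambda_{++}(g_0)$. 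Then $(0,1,-\delta)\in\Lambda_{++}(g_0)$ for small $\delta>0$, which unwinds to $c\xx-\yy\in\Lambda_+(h)$ for $c=1/\delta$. Thus $\yy\in F_\xx$, forcing $F_\xx=F$ and $\xx\in\operatorname{relint}(F)$.
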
 \noindent

By Lemma \ref{rkint} and the semimodularity of hyperbolic polymatroids we see that $\rk: L(\Lambda_+) \to \mathbb{N}$ is \emph{semimodular}, that is, 
\begin{align*}
\rk(F \vee G) + \rk(F \wedge G) \leq \rk(F) + \rk(G)
\end{align*} \noindent
for all $F,G \in L(\Lambda_+)$. We may therefore equivalently define a hyperbolic polymatroid in terms of the face lattice of the hyperbolicity cone as follows: If $\FF =(F_1, \ldots, F_m)$ is a tuple of elements of  the face lattice $L(\Lambda_+)$, then the function $r_\FF : 2^{[m]} \rightarrow \NN$ defined by 
$$
r_\FF(S) = \rk\left(\bigvee_{i \in S}F_i \right)
$$
is a hyperbolic polymatroid.

The following theorem collects a few fundamental facts about hyperbolic polynomials and their hyperbolicity cones. For proofs see \cite{Gar,R}. 

\begin{theorem}[G\aa rding, \cite{Gar}]\label{hypfund}
Suppose $h$ is hyperbolic with respect to $\ee \in \RR^n$. 
\begin{enumerate}
\item $\Lambda_+(\ee)$ and $\Lambda_{++}(\ee)$ are convex cones.
\item $\Lambda_{++}(\ee)$ is the connected component of 
$$
\{ \xx \in \RR^n : h(\xx) \neq 0\}
$$
which contains $\ee$. 
\item $\lmi : \RR^n \rightarrow \RR$ is a concave function, and  $\lma : \RR^n \rightarrow \RR$ is a convex function. 
\item If $\ee' \in \Lambda_{++}(\ee)$, then $h$ is hyperbolic with respect to $\ee'$ and $\Lambda_{++}(\ee')=\Lambda_{++}(\ee)$.
\end{enumerate}
\end{theorem}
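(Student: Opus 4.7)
The plan is to prove the items in the order (2), (1), (4), (3), with the product formula as the common ingredient. Setting $t=0$ in the factorization $h(t\ee - \xx) = h(\ee)\prod_{j=1}^d(t-\lambda_j(\xx))$ and using the homogeneity of $h$ yields $h(\xx) = h(\ee)\prod_{j=1}^d\lambda_j(\xx)$. Roots of real-rooted real-coefficient polynomials depend continuously on the coefficients, so the eigenvalues are continuous functions of $\xx$; in particular $\Lambda_{++}$ is open and is contained in $\{h\neq 0\}$. Throughout I normalize so that $h(\ee)>0$.

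For (2), given a path $\gamma\colon[0,1]\to\{h\neq 0\}$ from $\ee$ to $\xx_0$, the eigenvalues $\lambda_j(\gamma(s))$ vary continuously, and the product formula $h(\gamma(s)) = h(\ee)\prod_j\lambda_j(\gamma(s))\neq 0$ prevents any of them from vanishing; since $\lambda_j(\ee)=1>0$, each $\lambda_j$ remains positive, whence $\xx_0\in\Lambda_{++}$. So $\Lambda_{++}$ is exactly the component. For (1), take $\xx,\yy\in\Lambda_{++}$ and set $Q(s,t) = h(s\xx + t\yy)$. Since $\xx,\yy\in\Lambda_+$, Lemma \ref{hypstab} makes $Q$ stable; the leading term in $s$ is $h(\xx)s^d$ with $h(\xx)\neq 0$, so $Q\not\equiv 0$, and Lemma \ref{hypbas} forces the nonzero coefficients of $Q$, which are real, to share a sign. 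Hence $Q(s,t)\neq 0$ for all $s,t>0$, so the segment from $\xx$ to $\yy$ lies in $\{h\neq 0\}$; being connected and passing through $\xx\in\Lambda_{++}$, it lies in $\Lambda_{++}$ by (2). Convexity of $\Lambda_+=\overline{\Lambda_{++}}$ then follows, the nontrivial inclusion being $\xx+\epsilon\ee\in\Lambda_{++}$ for $\xx\in\Lambda_+$ and $\epsilon>0$ (since the eigenvalues shift to $\lambda_j(\xx)+\epsilon>0$).

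For (4), fix $\ee'\in\Lambda_{++}(\ee)$ and $\yy\in\RR^n$. Both $\ee,\ee'\in\Lambda_+$, so Lemma \ref{hypstab} gives that $P(x_1,x_2) = h(-\yy + x_1\ee + x_2\ee')$ is stable in $(x_1,x_2)$; specializing $x_1=0$ via Lemma \ref{closureprop}(i) yields the stable univariate polynomial $h(x_2\ee' - \yy)$ with real coefficients, whose zeros, being confined to the closed lower half-plane and symmetric under complex conjugation, are all real. Together with $h(\ee')\neq 0$ from the product formula, this shows $h$ is hyperbolic with respect to $\ee'$. Applying (2) both to $\ee$ and to $\ee'$, the sets $\Lambda_{++}(\ee)$ and $\Lambda_{++}(\ee')$ are the connected components of $\{h\neq 0\}$ through $\ee$ and $\ee'$ respectively; but $\ee'\in\Lambda_{++}(\ee)$, so these two components coincide. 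Finally, (3) reduces to convexity of $\Lambda_+$: the inequality $\lmi((1-t)\xx + t\yy)\geq (1-t)\lmi(\xx) + t\lmi(\yy)$ is equivalent to $(1-t)(\xx-\lmi(\xx)\ee) + t(\yy-\lmi(\yy)\ee)\in\Lambda_+$, which holds because each summand is in $\Lambda_+$, and convexity of $\lma$ follows from $\lma(\xx)=-\lmi(-\xx)$. The main obstacle is (4): without the stability closure properties packaged in Lemmas \ref{hypbas}, \ref{closureprop}, \ref{hypstab}, transferring hyperbolicity from $\ee$ to $\ee'$ would demand a delicate Hurwitz-type continuity argument to prevent real eigenvalues from escaping into complex conjugate pairs when the hyperbolicity direction is deformed.
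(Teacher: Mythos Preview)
The paper does not give its own proof of this theorem; it states the result and refers the reader to G\aa rding \cite{Gar} and Renegar \cite{R}. So there is no in-paper argument to compare against.

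That said, your proposed proof is circular. You lean on Lemmas~\ref{hypbas} and~\ref{hypstab} to establish parts (i) and (iv), but the proofs of those lemmas in the cited sources themselves rely on G\aa rding's theorem---precisely on parts (i) and (iv). The standard proof of Lemma~\ref{hypstab}, for instance, goes: perturb the $\vv_i\in\Lambda_+$ into $\Lambda_{++}$; by part (iv) $h$ is hyperbolic with respect to any point of $\Lambda_{++}$, and by convexity (part (i)) the imaginary part $\sum_j\Im(x_j)\vv_j$ lies in $\Lambda_{++}$ whenever every $\Im(x_j)>0$, forcing $h(\vv_0+\sum x_j\vv_j)\neq 0$. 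Similarly, the implication ``hyperbolic with $\RR_+^n\subseteq\Lambda_+$ $\Rightarrow$ stable'' in Lemma~\ref{hypbas} needs part (iv) to move the direction of hyperbolicity from $\ee$ to an arbitrary vector in $\RR_{++}^n$. So in invoking these lemmas you are assuming exactly what has to be shown.

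Your argument for (ii) is self-contained and fine (you should add the one-line observation, from $\lambda_j((1-s)\xx+s\ee)=(1-s)\lambda_j(\xx)+s$, that $\Lambda_{++}$ is star-shaped about $\ee$, hence connected and therefore contained in the component), and your reduction of (iii) to convexity of $\Lambda_+$ is correct. But for (i) and (iv) you cannot avoid a direct argument: the ``delicate Hurwitz-type continuity argument'' you mention in your closing sentence is exactly what G\aa rding's original proof carries out, and it is not bypassed by citing its downstream corollaries.
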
 
\noindent
Recall that the \emph{lineality space} of a convex cone $C$ is $C \cap -C$, i.e., the largest linear space contained in $C$. It follows that the lineality space of a hyperbolicity cone is $\{\xx : \lambda_i(\xx)=0 \mbox{ for all } i\}$, see e.g. \cite{R}. Also if $\xx$ is in the lineality space, then $\lambda_i(\xx+\yy) = \lambda_i(\yy)$ for all $1\leq i \leq d$ and $\yy \in \RR^n$ \cite{R}. 

By homogeneity of $h$ 
\begin{equation}\label{dilambdas}
\lambda_j(s\xx+t\ee)= 
\begin{cases}
s\lambda_j(\xx)+t &\mbox{ if } s\geq 0 \mbox{ and } \\
s\lambda_{d-j+1}(\xx)+t &\mbox{ if } s \leq 0
\end{cases},
\end{equation}
for all $s,t \in \RR$ and $\xx \in \RR^n$.

In analogy with the eigenvalue characterization of matrix projections we define projections in $\Lambda_+$ as follows.
\begin{definition}
An element in $\Lambda_+$ is a \emph{projection} if its eigenvalues are contained in $\{0,1\}$.
\end{definition}
\begin{remark}
Note that $\mathbf{0}$, $\ee$ and appropriate multiples of rank one vectors in $\Lambda_+$ are always projections.
\end{remark}
\begin{lemma}\label{faceproj}
Suppose $\xx, \yy \in \Lambda_+$ are such that $F_\xx \leq F_\yy$ and $\rk(\yy) =r$. 
If $\lambda_1(\xx) \leq \lambda_r(\yy)$, then $\xx \leq \yy$. 

In particular if $\xx, \yy \in \Lambda_+$ are projections, then $F_\xx \leq F_\yy$ if and only if $\xx \leq \yy$. 
\end{lemma}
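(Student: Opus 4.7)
My plan is to reduce the claim to a fact about positive semidefinite matrices by passing to the three-dimensional slice spanned by $\xx,\yy,\ee$ and applying the Helton--Vinnikov theorem. First set $Q(s,t,u) := h(s\xx + t\yy + u\ee)$, a homogeneous polynomial of degree $d$ in three variables with $Q(0,0,1) = h(\ee) \neq 0$. For any $s,t \in \RR$, the polynomial $\tau \mapsto Q(-s,-t,\tau) = h(\tau\ee - (s\xx + t\yy))$ has only real roots by hyperbolicity of $h$, so $Q$ is hyperbolic with respect to $(0,0,1)$. After rescaling $h$ so that $h(\ee) = 1$, Theorem~\ref{heltonvinnikov} produces symmetric $d \times d$ matrices $A, B$ with $Q(s,t,u) = \det(sA + tB + uI)$. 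The eigenvalues of $\xx$ with respect to $\ee$ are then the roots of $\tau \mapsto Q(-1,0,\tau) = \det(\tau I - A)$, i.e.\ the eigenvalues of $A$, and similarly for $\yy$ and $B$; more generally
\[ s\xx + t\yy + u\ee \in \Lambda_+ \iff sA + tB + uI \succeq 0, \]
since both conditions are equivalent to $\det(\tau I - (sA + tB + uI))$ having only nonnegative real roots. In particular $A, B \succeq 0$ and $\mathrm{rank}(B) = \rk(\yy) = r$.

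Next I would show $\mathrm{range}(A) \subseteq \mathrm{range}(B)$. Since $\yy$ lies in the relative interior of $F_\yy$ (Lemma~\ref{rkint}) and $\xx \in F_\yy$, there exists $\epsilon > 0$ with $\yy - \epsilon\xx \in F_\yy \subseteq \Lambda_+$, i.e.\ $B - \epsilon A \succeq 0$. Testing on any $v \in \ker(B)$ gives $0 \leq v^\top(B - \epsilon A)v = -\epsilon\, v^\top A v$, hence $v^\top A v = 0$; combined with $A \succeq 0$ this forces $Av = 0$, so $\ker(B) \subseteq \ker(A)$ and $\mathrm{range}(A) \subseteq \mathrm{range}(B)$.

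Setting $V := \mathrm{range}(B)$, both $A$ and $B$ preserve $V$ and vanish on $V^\perp$. On $V$, $B|_V$ has smallest eigenvalue $\lambda_r(B) = \lambda_r(\yy)$ while $A|_V$ has largest eigenvalue at most $\lambda_1(A) = \lambda_1(\xx)$, so the hypothesis $\lambda_1(\xx) \leq \lambda_r(\yy)$ yields
\[ A|_V \preceq \lambda_1(\xx)\, I_V \preceq \lambda_r(\yy)\, I_V \preceq B|_V, \]
while $(B - A)|_{V^\perp} = 0$; hence $B - A \succeq 0$, which by the dictionary above is exactly $\yy - \xx \in \Lambda_+$. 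For the second assertion, projections have $\lambda_1(\xx) \leq 1$ and $\lambda_r(\yy) = 1$ (since $\yy$ has exactly $r$ nonzero eigenvalues, all equal to $1$), so $F_\xx \leq F_\yy$ implies $\xx \leq \yy$ by the first part; conversely $\xx \leq \yy$ with $\yy \in F_\yy$ forces $\xx \in F_\yy$ by the face property, hence $F_\xx \leq F_\yy$ by minimality of $F_\xx$.

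The main obstacle I anticipate is setting up the dictionary cleanly: the Helton--Vinnikov representation is only a polynomial identity, so one must verify that hyperbolic rank, face membership, and cone ordering all transfer faithfully to matrix rank, range inclusion, and the PSD order. Once that matching is in place, the lemma becomes a routine PSD computation.
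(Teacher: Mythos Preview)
Your argument is correct, and it is a genuinely different route from the paper's. The paper stays entirely within the hyperbolic framework: it passes to the three-variable polynomial $g(u,s,t)=h(u\ee+s\yy+t\xx)$, factors out $u^{d-r}$ (using that $\xx\in F_\yy$ forces $\rk(a\xx+b\yy)=r$ for $a,b>0$) to obtain a degree-$r$ hyperbolic polynomial $g_0$, and then applies G\aa rding's concavity of the minimum eigenvalue (Theorem~\ref{hypfund}(iii)) to $g_0$ to conclude $\lambda'_r(0,1,-1)\geq \lambda_r(\yy)-\lambda_1(\xx)\geq 0$. You instead invoke Helton--Vinnikov on the same three-variable slice to obtain a determinantal representation $\det(sA+tB+uI)$, transfer the face condition to the range inclusion $\ker B\subseteq \ker A$ via the relative-interior trick $\yy-\epsilon\xx\in\Lambda_+$, and finish with an elementary PSD comparison on $\mathrm{range}(B)$. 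Your approach is conceptually transparent for anyone comfortable with linear algebra and makes the ``dictionary'' explicit, but it imports a deep theorem (Helton--Vinnikov) as a black box; the paper's proof is self-contained, needing only G\aa rding's concavity, and in spirit shows that the PSD model is not actually required---the relevant convexity already lives in the hyperbolic cone. Both proofs exploit the same reduction to three variables; they differ only in whether one linearises that slice via a determinantal representation or argues intrinsically.
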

\begin{proof}
Suppose $\xx, \yy \in \Lambda_+$ are such that $F_\xx \leq F_\yy$, $\rk(\yy) =r$ and 
 $\lambda_1(\xx) \leq \lambda_r(\yy)$. 
Consider the polynomial
$$
g(u,s,t)= h(u\ee+s\yy+t\xx),
$$
which is hyperbolic with respect to $(1,0,0)$ and whose hyperbolicity cone contains the positive orthant. Since $\xx \in F_\yy$ we know that $\rk(a\xx+b\yy) =r$ for all 
$a,b >0$. Since all non-zero Taylor coefficients of $g(u,s,t)$ have the same sign, by Lemma \ref{hypbas}, we may write 
$$
g(u,s,t)= u^{d-r}g_0(u,s,t), \quad d=\deg h, 
$$
where $g_0(u,s,t)$ is hyperbolic with respect to $(1,0,0)$ and also $(0,1,0)$, and its hyperbolicity cone contains the positive orthant.  Let $\lambda'_j(a,b,c)$, $j=1,\ldots, r$, denote the eigenvalues of $g_0$ (with respect to $(1,0,0)$). Then by \eqref{dilambdas} and the concavity of $\lambda'_r$ (Theorem \ref{hypfund}):
$$
\lambda'_r(0,1,-1) \geq \lambda'_r(0,1,0) + \lambda'_r(0,0,-1)=\lambda_r(\yy)-\lambda_1(\xx) \geq 0. 
$$
By construction $\lmi(\yy-\xx)= \min\{0, \lambda'_r(0,1,-1)\}$, and  the  lemma follows. 
\end{proof}
\begin{lemma}\label{notlaxred}
If $\xx,\yy \in \Lambda_+$ are projections with $\xx \leq \yy$, then $\yy-\xx$ is a projection with 
$$
\rk(\yy-\xx) = \rk(\yy)-\rk(\xx).
$$
\end{lemma}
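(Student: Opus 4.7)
The plan is to reduce to a ternary hyperbolic polynomial via the auxiliary polynomial $g(u, \sigma, \tau) = h(u\ee + \sigma \yy + \tau \xx)$, and then invoke Theorem~\ref{heltonvinnikov} (Helton--Vinnikov) to obtain a genuine matrix model in which the conclusion collapses to the elementary fact that the difference of two nested symmetric orthogonal projection matrices is again an orthogonal projection.

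Set $r = \rk(\yy)$ and $s = \rk(\xx)$. Since $\xx \leq \yy$ and $\yy \in F_\yy$, the defining property of a face gives $\xx \in F_\yy$, hence $F_\xx \leq F_\yy$ and $s \leq r$. Following the proof of Lemma~\ref{faceproj}, $g(u, \sigma, \tau)$ is ternary and hyperbolic with respect to $(1,0,0)$, has only nonnegative Taylor coefficients by Lemma~\ref{hypbas}, and since every $\sigma \yy + \tau \xx$ with $\sigma, \tau > 0$ lies in the relative interior of $F_\yy$ and hence has rank $r$, it factors as $g(u, \sigma, \tau) = u^{d-r} g_0(u, \sigma, \tau)$, where $d = \deg h$ and $g_0$ is ternary hyperbolic of degree $r$ with $g_0(1,0,0) = h(\ee)$.

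Applying Theorem~\ref{heltonvinnikov} to the normalization $g_0/h(\ee)$ yields real symmetric $r \times r$ matrices $B, C$ with
\[
g_0(u, \sigma, \tau) = h(\ee) \det(uI + \sigma B + \tau C).
\]
One then identifies $B$ and $C$ from the known spectra. Since $\yy$ is a projection of rank $r$,
\[
g(u, -1, 0) = h(u\ee - \yy) = h(\ee)(u-1)^r u^{d-r},
\]
so $g_0(u, -1, 0) = h(\ee)(u-1)^r$, forcing $\det(uI - B) = (u-1)^r$ and hence $B = I$ (a real symmetric matrix with this characteristic polynomial must equal $I$). Similarly, using the spectrum of $\xx$,
\[
g(u, 0, -1) = h(u\ee - \xx) = h(\ee)(u-1)^s u^{d-s}
\]
gives $\det(uI - C) = (u-1)^s u^{r-s}$, so $C$ is a real symmetric matrix with $s$ eigenvalues equal to $1$ and $r-s$ equal to $0$, i.e., an orthogonal projection of rank $s$.

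The conclusion is then immediate from
\[
h(u\ee - (\yy - \xx)) = g(u, -1, 1) = u^{d-r} h(\ee) \det((u-1)I + C) = h(\ee) \, u^{d-r+s}(u-1)^{r-s},
\]
where the last equality uses that $(u-1)I + C$ has eigenvalues $u$ (multiplicity $s$) and $u-1$ (multiplicity $r-s$). Reading off the roots, $\yy - \xx$ has eigenvalues $1$ (multiplicity $r-s$) and $0$ (multiplicity $d-r+s$), so it is a projection of rank $r-s = \rk(\yy) - \rk(\xx)$. The main obstacle is the spectral control: the soft inequalities $\lmi(\yy - \xx) \geq 0$ and $\lma(\yy - \xx) \leq \lma(\yy) - \lmi(\xx) \leq 1$ only confine the eigenvalues of $\yy - \xx$ to $[0,1]$, and excluding intermediate values requires the rigidity afforded by the Helton--Vinnikov determinantal representation, which reduces the problem to the (trivial) linear algebra of nested projection matrices.
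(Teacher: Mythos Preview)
Your proof is correct, and shares with the paper the reduction to the ternary polynomial $g(u,\sigma,\tau)=h(u\ee+\sigma\yy+\tau\xx)=u^{d-r}g_0(u,\sigma,\tau)$, but from that point the two arguments diverge.

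The paper avoids Helton--Vinnikov entirely. After passing to $g_0$, the vector $(0,1,0)$ (corresponding to $\yy$) is a projection of full rank $r$ in the new cone, so $F_{(0,1,0)}$ equals the whole cone $F_{(1,0,0)}$. Lemma~\ref{faceproj} then forces $(0,1,0)-(1,0,0)$ into the lineality space of $\Lambda_+(g_0)$, whence $\lambda_i(\yy-\xx)=\lambda_i(\ee-\xx)=1-\lambda_{d-i+1}(\xx)$ via \eqref{dilambdas}, and the conclusion is immediate. This is entirely elementary: it uses only the convexity/lineality structure already developed in the section.

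Your route instead invokes Theorem~\ref{heltonvinnikov} to realize $g_0$ as $h(\ee)\det(uI+\sigma B+\tau C)$, identifies $B=I$ and $C$ as an orthogonal projection from the known spectra of $\yy$ and $\xx$, and then reads off the eigenvalues of $\yy-\xx$ from $\det((u-1)I+C)$. This is clean and makes the analogy with matrix projections literal, but it imports a deep result (Helton--Vinnikov) for a lemma that, as the paper shows, follows from much softer arguments. The paper's proof is therefore more economical; yours has the virtue of making explicit the intuition that ``hyperbolic projections behave like matrix projections,'' at the cost of a heavier dependency.
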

\begin{proof}
Suppose first $F_\yy = \Lambda_+ = F_\ee$. Then $\yy-\ee, \ee-\yy \in \Lambda_+$ by Lemma \ref{faceproj}, and hence $\yy-\ee$ is in the lineality space of $\Lambda_+$. Then 
$$
\lambda_{i}(\yy-\xx)= \lambda_i(\ee-\xx)= 1-\lambda_{d-i+1}(\xx),
$$
for all $1 \leq i \leq d=\deg h$, and hence $\yy-\xx$ is a projection of rank $d-\rk(\xx)$. 

If $F_\yy \neq F_\ee$, then $r:=\rk(\yy)<d$. Consider the hyperbolic polynomial 
$$
g(u,s,t)= h(u\ee+s\xx+t\yy)=u^{d-r}g_0(u,s,t),
$$
where $g_0$ is hyperbolic with respect to $\ee'=(1,0,0)$. It follows that $\xx'=(0,1,0)$ and 
$\yy'=(0,0,1)$ are projections with $F_{\ee'}=F_{\yy'}$. The lemma now follows from the first case considered. 
\end{proof} \noindent
\begin{remark}
Note that if $\xx \leq \yy$ and $\yy \leq \xx$, then $\yy-\xx$ is in the lineality space of $\Lambda_+$. Moreover $\xx \leq \yy$ if and only if $\ee-\yy \leq \ee -\xx$. Since $F_{\ee} = \Lambda_+$ we have by Lemma \ref{faceproj} that $\xx \leq \ee$ for all projections $\xx \in \Lambda_+$. Hence by Lemma \ref{notlaxred} it follows that $\xx$ is a projection if and only if $\ee - \xx$ is a projection.
\end{remark} \noindent
The following proposition gives a sufficient condition for two faces in $\Lambda_+$ to be modular with respect to the hyperbolic  rank function. 
\begin{proposition}
\label{modfacelat}
If $\xx, \yy \in \Lambda_+$ are projections such that $F_\xx \wedge F_\yy$, 
$F_\xx \vee F_\yy$, $F_{\ee-\xx} \wedge F_{\ee-\yy}$ and 
$F_{\ee-\xx} \vee F_{\ee-\yy}$ all contain a projection in their relative interiors, then 
$$
\rk(F_\xx)+\rk(F_\yy) = \rk(F_\xx \wedge F_\yy)+ \rk(F_\xx \vee F_\yy).
$$
\end{proposition}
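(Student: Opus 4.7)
The plan is to extract two rank inequalities tying the four hypothesized projections together and combine them with two instances of semimodularity to force equality. Let $z_\wedge, z_\vee, w_\wedge, w_\vee$ denote projections in the relative interiors of $F_\xx \wedge F_\yy$, $F_\xx \vee F_\yy$, $F_{\ee-\xx} \wedge F_{\ee-\yy}$, $F_{\ee-\xx} \vee F_{\ee-\yy}$ respectively; these exist by hypothesis and, by Lemma \ref{rkint}, their hyperbolic ranks coincide with the ranks of the corresponding faces. Writing $d = \deg h$, the stated identity will follow once I establish
\begin{equation*}
\rk(z_\wedge) + \rk(w_\vee) \geq d \qquad \text{and} \qquad \rk(z_\vee) + \rk(w_\wedge) \geq d,
\end{equation*}
since combining these with the two semimodularity inequalities forces every step to be tight.

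Both inequalities come from symmetric two-step applications of Lemma \ref{faceproj}. For the first, since $F_{\ee-\xx}$ and $F_{\ee-\yy}$ are each contained in their join $F_{w_\vee}$ and all three faces are cut out by the projections $\ee-\xx, \ee-\yy, w_\vee$, the ``in particular'' clause of Lemma \ref{faceproj} yields $\ee-\xx \leq w_\vee$ and $\ee-\yy \leq w_\vee$. Rearranging gives $\ee-w_\vee \leq \xx$ and $\ee-w_\vee \leq \yy$, and downward closure of faces forces $F_{\ee-w_\vee} \leq F_\xx \wedge F_\yy = F_{z_\wedge}$. A second use of Lemma \ref{faceproj}, now applied to the projections $\ee-w_\vee$ and $z_\wedge$, yields $\ee-w_\vee \leq z_\wedge$, whence Lemma \ref{notlaxred} records $d - \rk(w_\vee) = \rk(\ee - w_\vee) \leq \rk(z_\wedge)$. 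The second inequality is obtained by the entirely dual argument beginning from $F_\xx, F_\yy \leq F_{z_\vee}$: one deduces $\ee-z_\vee \leq \ee-\xx$ and $\ee-z_\vee \leq \ee-\yy$, hence $F_{\ee-z_\vee} \leq F_{w_\wedge}$, and finally $d-\rk(z_\vee) \leq \rk(w_\wedge)$.

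Adding the two derived inequalities yields $\rk(z_\wedge)+\rk(z_\vee)+\rk(w_\wedge)+\rk(w_\vee) \geq 2d$. On the other hand, semimodularity of the hyperbolic rank on $L(\Lambda_+)$, applied to the pairs $(F_\xx, F_\yy)$ and $(F_{\ee-\xx}, F_{\ee-\yy})$ and using $\rk(\ee-\vv) = d - \rk(\vv)$ for a projection $\vv$, gives $\rk(z_\wedge)+\rk(z_\vee) \leq \rk(\xx)+\rk(\yy)$ and $\rk(w_\wedge)+\rk(w_\vee) \leq 2d - \rk(\xx) - \rk(\yy)$; summing produces the reverse bound $\leq 2d$. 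Consequently both semimodularity estimates must be equalities, and in particular $\rk(\xx)+\rk(\yy) = \rk(z_\wedge)+\rk(z_\vee)$, which is the claim. The only point requiring care is a handful of degenerate situations (some projection of rank $0$ or $d$, or a nontrivial lineality space of $\Lambda_+$), but in each of these the eigenvalue hypothesis $\lambda_1 \leq \lambda_r$ of Lemma \ref{faceproj} degenerates to a trivially true inequality, so the chain of implications above goes through unchanged.
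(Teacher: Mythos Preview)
Your proof is correct and follows essentially the same route as the paper's: both arguments shuttle between the order on projections and the order on faces via Lemma~\ref{faceproj}, invoke Lemma~\ref{notlaxred} to convert $\ee-(\cdot)$ into rank complements, and then pit two instances of semimodularity against each other. The only cosmetic difference is that the paper proves the stronger face identities $F_{\vv'}=F_{\ee-\ww}$ and $F_{\ww'}=F_{\ee-\vv}$ (in your notation $F_{w_\wedge}=F_{\ee-z_\vee}$ and $F_{w_\vee}=F_{\ee-z_\wedge}$), whereas you extract only the one-sided containments needed to get the two inequalities $\rk(z_\wedge)+\rk(w_\vee)\geq d$ and $\rk(z_\vee)+\rk(w_\wedge)\geq d$ and then let the summed semimodularity bound squeeze everything to equality.
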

\begin{proof}
Let $\vv, \ww, \vv', \ww'$ be the projections in the relative interiors of $F_\xx \wedge F_\yy$, 
$F_\xx \vee F_\yy$, $F_{\ee-\xx} \wedge F_{\ee-\yy}$ and $F_{\ee-\xx} \vee F_{\ee-\yy}$, respectively. Then $\ee-\ww \leq \ee-\xx$ and $\ee-\ww \leq \ee-\yy$, so that 
$\ee-\ww \in F_{\ee-\xx} \wedge F_{\ee-\yy}$ by Lemma \ref{faceproj}. By Lemma \ref{faceproj} again, $\ee-\ww \leq \vv'$. We also have $\ee-\vv' \geq \xx$ and $\ee-\vv' \geq \yy$ so that  $\ee-\vv' \geq \ww$, that is, $\ee-\ww \geq \vv'$. Thus $F_{\vv'}=F_{\ee-\ww}$ and analogously $F_{\ww'}=F_{\ee-\vv}$. Since $\rk : L(\Lambda_+) \rightarrow \NN$ is semimodular we have 
$$
\rk(\xx)+ \rk(\yy) \geq \rk(\vv) + \rk(\ww),
$$
and also 
$$
\rk(\ee-\xx)+ \rk(\ee-\yy) \geq \rk(\ee-\ww) + \rk(\ee-\vv),
$$ 
and so the proposition follows from Lemma \ref{notlaxred}. 
\end{proof} \noindent
\begin{corollary}\label{modgeom}
Let $\Lambda_+(h,\ee)$ be a hyperbolicity cone with trivial lineality space. Suppose all extreme rays of $\Lambda_+$ have the same hyperbolic rank, and that each face of $\Lambda_+$ contains a projection in its relative interior. Then $L(\Lambda_+)$ is a modular geometric lattice. 
\end{corollary}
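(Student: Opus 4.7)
The plan is to verify three properties of $L(\Lambda_+)$: that it is atomistic, that the hyperbolic rank $\rk$ is a modular function on it, and that consequently a well-defined lattice rank exists and is a positive multiple of $\rk$. Together these give the modular geometric conclusion, since the modularity of a finite-rank lattice is equivalent to the existence of a modular rank function, and ``geometric'' means atomistic plus semimodular (which modular implies).

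First, for atomicity, I would use that the trivial-lineality hypothesis makes $\Lambda_+$ pointed, so every face $F$ is itself a pointed closed convex cone whose extreme rays are extreme rays of $\Lambda_+$, hence atoms of $L(\Lambda_+)$. Picking any $\xx$ in the relative interior of $F$, Carath\'eodory's theorem applied to $F$ expresses $\xx = \sum_{i=1}^k c_i \xx_i$ with $c_i > 0$ and each $\xx_i$ on an extreme ray $R_i$ of $F$. Iterating the identity $F_{\xx+\yy} = F_\xx \vee F_\yy$ recalled earlier then gives $F = F_\xx = R_1 \vee \cdots \vee R_k$, so $L(\Lambda_+)$ is atomistic.

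Second, for modularity of $\rk$, given $F, G \in L(\Lambda_+)$, the hypothesis furnishes projections $\xx \in \mathrm{relint}(F)$ and $\yy \in \mathrm{relint}(G)$, so $F = F_\xx$ and $G = F_\yy$. The remark following Lemma \ref{notlaxred} ensures that $\ee - \xx$ and $\ee - \yy$ are again projections. The four faces $F_\xx \wedge F_\yy$, $F_\xx \vee F_\yy$, $F_{\ee-\xx} \wedge F_{\ee-\yy}$ and $F_{\ee-\xx} \vee F_{\ee-\yy}$ are themselves elements of $L(\Lambda_+)$ and therefore contain projections in their relative interiors by hypothesis. Proposition \ref{modfacelat} then delivers the modular identity $\rk(F) + \rk(G) = \rk(F \wedge G) + \rk(F \vee G)$.

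Finally, let $k$ denote the common hyperbolic rank of the extreme rays. If $F$ covers $F'$ in $L(\Lambda_+)$, atomicity yields an extreme ray $R \leq F$ with $R \not\leq F'$, whence $F = F' \vee R$ and $F' \wedge R = \{0\}$, and modularity forces $\rk(F) = \rk(F') + k$. Hence $L(\Lambda_+)$ is graded with lattice rank $\rho = \rk/k$, which inherits modularity from $\rk$; combined with atomicity this yields the modular geometric conclusion. I expect the main obstacle to be the verification in the second step: one must check that the particular constellation of four faces required by Proposition \ref{modfacelat} always satisfies the projection-in-relative-interior hypothesis. This is precisely what the corollary's assumption is engineered to guarantee, but it is the one genuine closure check that the proof depends on.
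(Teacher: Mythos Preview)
Your proposal is correct and follows essentially the same approach as the paper: establish atomicity from the fact that faces of a pointed cone are generated by extreme rays, obtain modularity of $\rk$ from Proposition~\ref{modfacelat} (using the hypothesis that every face contains a projection in its relative interior), and then normalize $\rk$ by the common atom-rank $k$ to obtain the lattice rank function. The paper's proof is terser---it cites \cite{Rock} for atomicity and argues by induction that $c\mid\rk(F)$---but your covering-relation argument and explicit Carath\'eodory step accomplish the same ends.
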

\begin{proof}
Since each face of $L(\Lambda_+)$ except $\{0\}$ is generated by extreme rays, see e.g. \cite[Cor. 18.5.2]{Rock}, it follows that  $L(\Lambda_+)$ atomic. Suppose $\rk(\mathbf{a})=c$ for all atoms $\mathbf{a} \in L(\Lambda_+)$. By modularity of the hyperbolic rank function (Proposition \ref{modfacelat}) and induction we see that $c$ divides 
$\rk(F)$ for all $F \in L(\Lambda_+)$. It follows that the function defined by 
$\rk(F)/c$ is the proper rank function of $L(\Lambda_+)$, since it is modular and equal to one on each atom. 
\end{proof}
\section{Hyperbolic matroids and Euclidean Jordan algebras} \noindent
\begin{figure}
\includegraphics[width=100mm]{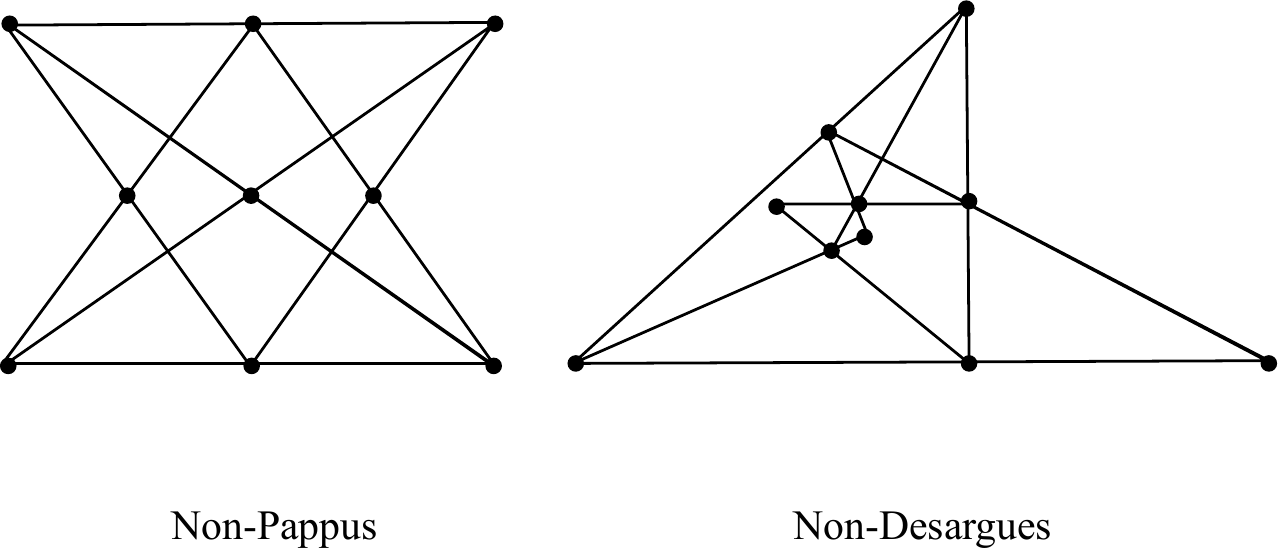}
\caption{The Non-Pappus and Non-Desargues configurations.}
\label{fanopappusdesargues}
\end{figure} 
\noindent In light of the generalized Lax conjecture it is of interest to find hyperbolic but non-linear (poly-) matroids.  Until present the only known instances of non-linear hyperbolic matroids are the V\'amos matroid \cite{WW} and a generalization of it \cite{BVY}. The generalized V\'amos matroids introduced in the following section provide an infinite family of such matroids. In this section we identify two further types of matroids that are hyperbolic but not linear through a connection with Euclidean Jordan algebras and projective geometry. \newline 
\newline Some classical examples of non-linear matroids are obtained by relaxing a circuit hyperplane in a matroid that comes from a geometric configuration. In fact the Non-Fano, Non-Pappus and Non-Desargues matroids (see Fig \ref{fanopappusdesargues}) are all derived from the family $n_3$ of symmetric configurations on $n$ points and $n$ lines, arranged such that $3$ lines pass through each point and $3$ points lie on each line \cite{Gru}. Note that such configurations need not be unique up to incidence isomorphism for given $n$. The Non-Fano, Non-Pappus and Non-Desargues matroids are all rank three matroids corresponding respectively to instances of the configurations $7_3, 9_3$ and $10_3$ after removing one line. It is interesting to note how representability diminishes as we move upwards in this hierarchy: The Non-Fano matroid is representable over all fields that do not have characteristic $2$ \cite{O}. The Non-Pappus matroid is skew-linear but not linear \cite{Ing}, which is to say that it only admits representations over non-commutative division rings e.g. the quaternions $\mathbb{H}$. Moreover it is known that the Non-Desargues matroid is not even skew-linear \cite{Ing}. On the other hand, it is known that the Non-Desargues matroid can be coordinatized by rank one projections over the octonions $\mathbb{O}$, see e.g. \cite{GPR}. The octonions form a non-commutative and non-associative division ring over the reals.
\newline 
\newline
An algebra $(A, \circ)$ over a field $\mathbb{K}$ is said to be a \emph{Jordan algebra} if for all $a,b \in A$
\begin{align*}
\displaystyle a \circ b = b \circ a \hspace{0.5cm} \text{ and } \hspace{0.5cm} a \circ (a^2 \circ b) = a^2 \circ (a \circ b).
\end{align*} \noindent
A Jordan algebra is \emph{Euclidean} if 
\begin{align*}
\displaystyle a_1^2 + \cdots + a_k^2 = 0 \implies a_1 = \cdots = a_k = 0
\end{align*} \noindent
for all $a_1, \dots, a_k \in A$. 
By a theorem of Jordan, von Neumann and Wigner \cite{JNW} the simple finite dimensional real Euclidean Jordan algebras classify into four infinite families and one exceptional algebra (the Albert algebra) as follows:
\begin{enumerate}
\item $H_n(\mathbb{K})$ ($\mathbb{K} = \mathbb{R}, \mathbb{C}, \mathbb{H}$) - the algebra of  Hermitian $n \times n$ matrices over $\mathbb{K}$ with Jordan product $a \circ b = \frac{1}{2}(ab + ba)$.
\item $\mathbb{R}^n \oplus \mathbb{R}$ - the real inner product space with inner product $(u \oplus \lambda, v \oplus \mu) = (u,v)_{\mathbb{R}^n} + \lambda \mu$ and Jordan product $(u \oplus \lambda) \circ (v \oplus \mu) = (\mu u+ \lambda v) \oplus ((u,v)_{\mathbb{R}^n} + \lambda \mu)$.
\item $H_3(\mathbb{O})$ - the algebra of octonionic Hermitian $3 \times 3$ matrices with Jordan product $a \circ b = \frac{1}{2}(ab + ba)$. 
\end{enumerate} \noindent
We refer to \cite{FK} for facts about Euclidean Jordan algebras mentioned below.
Let $A$ be a real Euclidean Jordan algebra of rank $r$ with identity $e$. A \emph{Jordan frame} is a complete system of orthogonal idempotents of rank one, that is, rank one elements $c_1, \dots, c_r \in A$ such that $c_i^2 = c_i$, $c_i \circ c_j = 0$ for $i \neq j$ and $c_1 + \cdots + c_r = e$.
A characteristic property of finite dimensional real Euclidean Jordan algebras is the spectral theorem 
\begin{theorem}
\label{spectral}
Let $A$ be a real Euclidean Jordan algebra of rank $r$.
Then for each $x \in A$ there exists a Jordan frame $c_1, \dots, c_r \in A$ and unique real numbers $\lambda_1(x), \dots, \lambda_r(x)$ (the eigenvalues) such that 
\begin{align*}
\displaystyle x = \sum_{j=1}^r \lambda_j(x) c_j.
\end{align*} \noindent
Moreover 
\begin{align*}
\displaystyle \sum_{j : \lambda_j = \lambda} c_j
\end{align*} \noindent
is uniquely determined for each eigenvalue $\lambda$.
\end{theorem}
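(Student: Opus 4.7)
The plan is to reduce the spectral decomposition to the commutative associative subalgebra generated by $x$. First I would invoke power-associativity of Jordan algebras: the subalgebra $\RR[x]$ generated by $x$ and $e$ is commutative and associative, and it inherits the formally real property from $A$. A finite-dimensional commutative, associative, formally real, unital $\RR$-algebra has no nonzero nilpotents; indeed if $y^{2^k}=0$ then iterated squaring together with the Euclidean hypothesis $\sum a_i^2 = 0 \Rightarrow a_i = 0$ forces $y=0$. Consequently the minimal polynomial of $x$ over $\RR$ is a product of distinct linear factors, so $\RR[x] \cong \RR^k$ for some $k$, with primitive orthogonal idempotents $p_1,\dots,p_k$ summing to $e$ and distinct reals $\mu_1,\dots,\mu_k$ satisfying $x = \sum_{i=1}^k \mu_i p_i$.

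Next I would refine each $p_i$ into rank-one idempotents to obtain a Jordan frame of the full algebra. Using the Peirce decomposition $A = A_1(p_i) \oplus A_{1/2}(p_i) \oplus A_0(p_i)$ attached to the idempotent $p_i$, the summand $A_1(p_i) = \{a \in A : p_i \circ a = a\}$ is again a Euclidean Jordan subalgebra with identity $p_i$, so by induction on rank (or the same Jordan-frame construction applied inside $A_1(p_i)$) one decomposes $p_i$ into primitive orthogonal idempotents of rank one. Concatenating these decompositions over $i = 1, \dots, k$ and relabeling yields a Jordan frame $c_1, \dots, c_r$ together with eigenvalues $\lambda_j \in \{\mu_1, \dots, \mu_k\}$ such that $x = \sum_{j=1}^r \lambda_j c_j$; the total number of rank-one idempotents produced is exactly $r$ because $r$ is by definition the common cardinality of any Jordan frame.

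For the uniqueness statement, the key observation is that $p_i$ is itself a polynomial in $x$, namely the Lagrange interpolant
\begin{equation*}
p_i = \prod_{\ell \neq i} \frac{x - \mu_\ell e}{\mu_i - \mu_\ell},
\end{equation*}
so $p_i$ depends only on $x$. By construction $\sum_{j : \lambda_j = \mu_i} c_j = p_i$, and $\sum_{j : \lambda_j = \mu} c_j = 0$ if $\mu \notin \{\mu_1, \dots, \mu_k\}$, which gives the asserted uniqueness of the spectral projections. The multiset $\{\lambda_1, \dots, \lambda_r\}$ is then pinned down as each $\mu_i$ occurs with multiplicity equal to the rank of $p_i$ in $A$.

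The main obstacle I expect is the structural step $\RR[x] \cong \RR^k$, since it genuinely combines three facts that each require proof: power-associativity of Jordan algebras (to make $\RR[x]$ an associative algebra, which does not follow from the Jordan identity applied a single time), the absence of nilpotents in a formally real commutative associative $\RR$-algebra, and the nonexistence of nonreal roots of the minimal polynomial over a Euclidean algebra. Once $\RR[x]$ is identified, the Peirce refinement step and the uniqueness step reduce to standard manipulations with idempotents.
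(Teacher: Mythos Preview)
The paper does not give a proof of this statement: Theorem~\ref{spectral} is quoted as background, with the sentence ``We refer to \cite{FK} for facts about Euclidean Jordan algebras mentioned below'' immediately preceding it. So there is no in-paper argument to compare your proposal against.

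That said, your outline is essentially the standard route taken in Faraut--Kor\'anyi. Two places deserve a little more care. First, the step ruling out complex factors in $\RR[x]$ does follow from formal reality, but you should say how: if $\RR[x]$ had a factor isomorphic to $\CC$, with unit $p$ and element $j$ satisfying $j^2=-p$, then $p^2+j^2=0$ in $A$ contradicts the Euclidean hypothesis. Second, in the uniqueness argument you compute the spectral projection $p_i$ as a Lagrange polynomial in $x$ using the distinct values $\mu_1,\dots,\mu_k$, but you should first check that this set of values is intrinsic to $x$ and does not depend on the chosen Jordan frame. This is immediate once you note that for any decomposition $x=\sum_j \lambda_j c_j$ with orthogonal idempotents one has $f(x)=\sum_j f(\lambda_j)c_j$, so the distinct $\lambda_j$ are exactly the roots of the minimal polynomial of $x$; your Lagrange formula then shows each spectral projection is a fixed polynomial in $x$, independent of the frame. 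With those two points made explicit, your sketch is complete.
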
 \noindent
A finite dimensional real Euclidean Jordan algebra is equipped with a hyperbolic determinant polynomial $\det:A \to \RR$ given by
\begin{align*}
\displaystyle \det(x) = \prod_{j=1}^r \lambda_j(x).
\end{align*} \noindent
Let $P$ be a set of \emph{points} and $L$ a set of \emph{lines}. Recall that a pair $G = (P,L)$ is a \emph{projective geometry} if the following axioms are satisfied:
\begin{enumerate}
\item For any two distinct points $a,b \in P$ there is a unique line $ab \in L$ containing $a$ and $b$.
\item Any line contains at least three points.
\item If $a,b,c,d \in P$ are distinct points such that $ab \cap cd \neq \emptyset$ then $ac \cap bd \neq \emptyset$.
\end{enumerate} 
Each projective geometry is a (simple) modular geometric lattice, and each modular geometric lattice is a direct product of a Boolean algebra with projective geometries, see 
\cite[p.~93]{Bir}. The following proposition is essentially a known connection between Jordan algebras and projective geometries, which we here prove in the theory of hyperbolic polynomials.  
\noindent
\begin{proposition}
\label{facelatprojgeom}
Let $A$ be a finite dimensional real Euclidean Jordan algebra and let $\Lambda_+$ denote the hyperbolicity cone of $\det:A \to \RR$. Then $L(\Lambda_+)$ is a modular geometric lattice. 

In particular if $A$ is simple, then $L(\Lambda_+)$ is a projective geometry. 
\end{proposition}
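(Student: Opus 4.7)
The strategy is to verify the three hypotheses of Corollary~\ref{modgeom}: that $\Lambda_+$ has trivial lineality space, that all its extreme rays share a common hyperbolic rank, and that every face contains a projection in its relative interior. All three will be supplied by the spectral theorem (Theorem~\ref{spectral}). As a preliminary, $\det$ is hyperbolic with respect to $e$ because $\det(te-x)=\prod_{j=1}^r(t-\lambda_j(x))$ has only real roots, where $r$ is the rank of $A$; hence the hyperbolic eigenvalues of $x\in A$ coincide with the Jordan eigenvalues $\lambda_j(x)$, and $\Lambda_+$ is the symmetric cone $\{x\in A:\lambda_j(x)\geq 0\text{ for all }j\}$.

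First, if $x$ lies in the lineality space of $\Lambda_+$ then $\lambda_j(x)=0$ for every $j$, and the spectral decomposition forces $x=0$. Second, any nonzero $x\in\Lambda_+$ of hyperbolic rank $s\geq 2$ admits, via its spectral decomposition, the non-trivial decomposition $x=\lambda_1(x)c_1+\sum_{j=2}^s\lambda_j(x)c_j$ as a sum of two elements of $\Lambda_+$ neither of which is a scalar multiple of $x$, so $x$ is not extreme; thus the extreme rays of $\Lambda_+$ are the rays spanned by primitive idempotents, all of hyperbolic rank $1$. For the third and main step, let $F$ be a face of $\Lambda_+$ and pick $x$ in its relative interior, so by Lemma~\ref{rkint}, $\rk(x)=\rk(F)=:s$. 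Writing $x=\sum_{j=1}^r\lambda_j(x)c_j$ with $\lambda_1(x)\geq\cdots\geq\lambda_s(x)>0=\lambda_{s+1}(x)=\cdots=\lambda_r(x)$, set $p:=c_1+\cdots+c_s$. Then $p$ is an idempotent (hence projection) of hyperbolic rank $s$, and both $x-\lambda_s(x)p$ and $\lambda_1(x)p-x$ are nonnegative combinations of $c_1,\ldots,c_s$, yielding
$$
\lambda_s(x)\,p \;\leq\; x \;\leq\; \lambda_1(x)\,p.
$$
Since $\yy\leq\zz$ implies $F_\yy\subseteq F_\zz$ and faces are invariant under positive scaling, this gives $F_p=F_x=F$, so $p$ lies in the relative interior of $F$. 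Corollary~\ref{modgeom} now implies that $L(\Lambda_+)$ is a modular geometric lattice.

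For the ``in particular'' statement, suppose $A$ is simple. By the structure theorem for modular geometric lattices \cite[p.~93]{Bir}, any such lattice decomposes as a direct product of a Boolean algebra with projective geometries, and any non-trivial factorization of $L(\Lambda_+)$ is induced by a non-trivial pair of complementary central elements, which correspond to non-trivial central idempotents of $A$. A central idempotent $c\notin\{0,e\}$ would induce a non-trivial Peirce decomposition of $A$ as a Jordan algebra direct sum, contradicting simplicity. Hence $L(\Lambda_+)$ is irreducible, i.e.\ a projective geometry. The principal technical point is the third verification, which pins down an explicit idempotent in the relative interior of each face via the spectral decomposition; the remaining work is essentially structural.
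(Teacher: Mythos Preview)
Your proof is correct and follows essentially the same route as the paper: verify the hypotheses of Corollary~\ref{modgeom} by invoking the spectral theorem to identify extreme rays with rank--one idempotents and to produce a projection $p=\sum_{j:\lambda_j(x)\neq 0}c_j$ in the relative interior of each face. Your write-up is simply more explicit --- you spell out the trivial--lineality check, the sandwich $\lambda_s(x)p\leq x\leq \lambda_1(x)p$ that pins down $F_p=F_x$, and an argument for the ``in particular'' clause via central idempotents, all of which the paper leaves tacit.
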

\begin{proof}
By Theorem \ref{spectral} the extreme rays of $\Lambda_+$ are multiples of rank one idempotents. Also, a face $F_x$ contains the projection 
$$
c= \sum_{j : \lambda_j(x)\neq 0 } c_j
$$
in its relative interior. The proposition now follows from Corollary \ref{modgeom}. 
\end{proof} \noindent
The Non-Pappus and Non-Desargues configurations are depicted in Fig \ref{fanopappusdesargues}. 
The configurations give rise to rank $3$ matroids where three points are dependent if and only if they are collinear. The Non-Pappus and Non-Desargues matroids are not linear but may be  represented over the projective geometries associated to the Euclidean Jordan algebras $H_3(\mathbb{H})$ and $H_3(\mathbb{O})$, respectively. This may be deduced from the coordinatizations in \cite[Example 1.5.14]{O} and \cite{GPR}. Hence by Proposition \ref{facelatprojgeom} we have
\begin{theorem}
The Non-Pappus and Non-Desargues matroids are hyperbolic.
\end{theorem}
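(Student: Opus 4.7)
My plan is to realize each matroid as a hyperbolic matroid arising from rank-one projections in the hyperbolicity cone of $\det$ on an appropriate simple Euclidean Jordan algebra. I would take $A = H_3(\mathbb{H})$ for the Non-Pappus matroid and $A = H_3(\mathbb{O})$ for the Non-Desargues matroid; both are simple finite dimensional real Euclidean Jordan algebras of rank $3$, so by Proposition \ref{facelatprojgeom} each face lattice $L(\Lambda_+(\det))$ is a rank-$3$ projective geometry, i.e., a projective plane over $\mathbb{H}$ and over $\mathbb{O}$, respectively. In these face lattices the atoms are positive multiples of primitive rank-one idempotents, and the join of two distinct atoms is the unique projective line containing them.

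Next I would invoke the classical coordinatizations: \cite[Example 1.5.14]{O} together with \cite{Ing} gives a representation of the Non-Pappus matroid in the projective plane over $\mathbb{H}$, and \cite{GPR} gives a representation of the Non-Desargues matroid in the projective plane over $\mathbb{O}$. Via the identification furnished by Proposition \ref{facelatprojgeom}, each such representation translates into a tuple of rank-one projections $\vv_1,\ldots, \vv_m \in \Lambda_+$ (with $m=9$ or $10$) such that three of them are collinear in $L(\Lambda_+)$ if and only if the corresponding points are collinear in the original geometric configuration. Setting $\VV = (\vv_1,\ldots, \vv_m)$, the function
$$
r_\VV(S) = \rk\Bigl(\sum_{i \in S}\vv_i\Bigr)
$$
is a hyperbolic polymatroid by the discussion preceding Proposition \ref{whpphyp}, and because each $\vv_i$ has hyperbolic rank $1$ it is in fact a hyperbolic matroid.

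To conclude, I would check that $r_\VV$ coincides with the rank function of the Non-Pappus (resp. Non-Desargues) matroid. In a rank-$3$ projective geometry the join of three distinct atoms has geometric rank $2$ if they are collinear and rank $3$ otherwise; by Corollary \ref{modgeom} the geometric rank of $L(\Lambda_+)$ equals the hyperbolic rank (all atoms having hyperbolic rank $1$). Thus three of the $\vv_i$ are dependent in $r_\VV$ iff they are collinear in the configuration, which is exactly the defining relation of the Non-Pappus/Non-Desargues matroid; all other values of $r_\VV$ are then forced by $r_\VV(\{i\})=1$ and the matroid axioms. The only real obstacle is unpacking the cited coordinatizations as tuples of rank-one elements of $\Lambda_+$ whose pairwise joins in the face lattice match the configuration lines, but this is precisely the content of "representable in the projective plane over the division ring" once $L(\Lambda_+)$ is identified with that projective plane via Proposition \ref{facelatprojgeom}.
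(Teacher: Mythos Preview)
Your proposal is correct and follows exactly the paper's approach: use Proposition~\ref{facelatprojgeom} to identify $L(\Lambda_+(\det))$ for $H_3(\mathbb{H})$ and $H_3(\mathbb{O})$ with the quaternionic and octonionic projective planes, then invoke the known coordinatizations (\cite[Example~1.5.14]{O}, \cite{GPR}) to realize the Non-Pappus and Non-Desargues matroids via rank-one elements of $\Lambda_+$. The paper's own argument is precisely this, stated in two sentences; your write-up simply unpacks the same logic in more detail.
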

\section{Generalized V\'amos Matroids with the (weak) half--plane property}
In this section we provide an infinite family of hyperbolic matroids that do not arise from modular geometric lattices. Let us be precise. Suppose $L$ is a lattice with a smallest element $\hat{0}$, and 
$f : L \rightarrow \NN$ is a function satisfying 
\begin{enumerate}
\item $f(\hat{0})=0$,
\item if $x \leq y$, then $f(x) \leq f(y)$, 
\item for any $x,y \in L$, 
$$
f(x)+f(y) \geq f(x\vee y) + f(x \wedge y).
$$
\end{enumerate}
If $x_1,\ldots, x_m \in L$, then the function $r : 2^{[m]} \rightarrow \NN$ defined by 
$$
r(S) = f\left(\bigvee_{i \in S} x_i\right)
$$
defines a polymatroid. All polymatroids arise in this manner. However 
if $f$ is modular, i.e., 
$$
f(x)+f(y) = f(x\vee y) + f(x \wedge y), \ \ \ \mbox{ for all } x,y \in L,
$$
we say that $r$ is \emph{modularly represented}. Hence all linear matroids as well as all projective geometries are modularly represented. Although Ingleton's proof \cite{Ing} of the next lemma only concerns linear matroids it extends verbatim to modularly represented matroids. 
\begin{lemma}[Ingleton's Inequality, \cite{Ing}]\label{Ingin}
Suppose $r : 2^{E} \rightarrow \NN$ is a modularly represented polymatroid and 
$A,B,C,D \subseteq E$. Then 
\begin{align*}
\displaystyle & r(A \cup B) + r(A \cup C \cup D) + r(C) + r(D) + r(B \cup C \cup D) \leq \\ & r(A \cup C) + r(A \cup D) + r(B \cup C) + r(B \cup D) + r(C \cup D).
\end{align*} \noindent
\end{lemma}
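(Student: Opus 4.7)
The plan is to reduce Ingleton's inequality, via the modular identity $f(x \vee y) + f(x \wedge y) = f(x) + f(y)$, to a simple monotonicity statement about two carefully chosen lattice elements. Setting $a := \bigvee_{i \in A} x_i$ and analogously $b, c, d \in L$, so that $r(A \cup B) = f(a \vee b)$ and so on, the asserted inequality becomes
\[
f(a \vee b) + f(a \vee c \vee d) + f(c) + f(d) + f(b \vee c \vee d) \;\le\; f(a \vee c) + f(a \vee d) + f(b \vee c) + f(b \vee d) + f(c \vee d).
\]

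I would first apply modularity to the pair $(c,d)$ to rewrite $f(c) + f(d) - f(c \vee d) = f(c \wedge d)$, and to the pairs $(a \vee c,\, a \vee d)$ and $(b \vee c,\, b \vee d)$ to obtain
\[
f(a \vee c) + f(a \vee d) - f(a \vee c \vee d) = f(X_a), \qquad f(b \vee c) + f(b \vee d) - f(b \vee c \vee d) = f(X_b),
\]
where $X_a := (a \vee c) \wedge (a \vee d)$ and $X_b := (b \vee c) \wedge (b \vee d)$. After substitution and cancellation, the inequality collapses to the equivalent form
\[
f(X_a) + f(X_b) \;\ge\; f(a \vee b) + f(c \wedge d).
\]

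To finish, I would establish two lattice comparisons. Since $X_a \ge a$ and $X_b \ge b$ by construction, we have $X_a \vee X_b \ge a \vee b$. Since $c \le a \vee c$ and $d \le a \vee d$, we have $c \wedge d \le (a \vee c) \wedge (a \vee d) = X_a$, and symmetrically $c \wedge d \le X_b$, giving $X_a \wedge X_b \ge c \wedge d$. A final application of modularity of $f$ to the pair $(X_a, X_b)$, followed by monotonicity, yields
\[
f(X_a) + f(X_b) = f(X_a \vee X_b) + f(X_a \wedge X_b) \;\ge\; f(a \vee b) + f(c \wedge d),
\]
which is exactly the reduced inequality.

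The only real subtlety is identifying the auxiliary elements $X_a, X_b$; once they are in hand, the argument is a single application of the modular law followed by monotonicity. This also makes transparent why the inequality fails to follow from mere semimodularity: the crucial step invokes the equality $f(X_a) + f(X_b) = f(X_a \vee X_b) + f(X_a \wedge X_b)$, not just an inequality, so the argument is genuinely restricted to modularly represented polymatroids.
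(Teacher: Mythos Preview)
Your proof is correct and is precisely Ingleton's classical argument, translated from the subspace setting to that of a modular function on a lattice. The paper does not give its own proof but simply remarks that Ingleton's proof for linear matroids extends verbatim to modularly represented polymatroids; your write-up is exactly that extension, with the auxiliary elements $X_a = (a\vee c)\wedge(a\vee d)$ and $X_b = (b\vee c)\wedge(b\vee d)$ playing the role of the subspaces $(A+C)\cap(A+D)$ and $(B+C)\cap(B+D)$ in the original vector-space proof.
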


\noindent The \emph{V\'amos matroid} $V_8$ is the rank-four matroid on $E = [8]$ having set of bases 
\begin{align*}
\mathcal{B}(V_8) = \binom{E}{4} \setminus \{ \{1,2,3,4\}, \{1,2,5,6 \}, \{1,2,7,8 \}, \{3,4,5,6 \}, \{5,6,7,8 \} \}.
\end{align*} \noindent 

The rank function of the V\'amos matroid fails to satisfy Ingleton's inequality (see \cite{Ing}), and hence it is not modularly represented. Nevertheless Wagner and Wei \cite{WW} proved that $V_8$ has the half-plane property, and hence $V_8$ is hyperbolic. This was used in \cite{B} to provide counterexamples to stronger algebraic versions of the generalized Lax conjecture. 

Burton, Vinzant and Youm \cite{BVY} studied an infinite family of generalized V\'amos matroids, $\{V_{2n}\}_{n\geq 4}$, and conjectured that all members of the family have the half-plane property. They proved their conjecture for $n=5$. Below we generalize their construction and construct a family of matroids; one matroid for each uniform hypergraph. We prove that all matroids corresponding to simple graphs are HPP, and that all matroids corresponding to uniform hypergraphs are WHPP. In particular this will prove the conjecture of Burton \emph{et al.}

Recall that a rank $r$ \emph{paving matroid} is  matroid such that all its circuits have size at least $r$. Paving matroids may be characterized in terms of $d$-{partition}. A $d$-\emph{partition} of a set $E$ is a collection $\mathcal{S}$ of subsets of $E$ all of size at least $d$, such that every $d$-subset of $E$ lies in a unique member of $\mathcal{S}$. The $d$-partition $\mathcal{S}=\{E\}$ is the \emph{trivial} $d$-\emph{partition}. For a proof of the next proposition see \cite[Prop. 2.1.21]{O}. 

\begin{proposition}\label{d-part}
The hyperplanes of any rank $d+1 \geq 2$  paving matroid form a non-trivial $d$-partition. 

Conversely, the elements of a non-trivial $d$-partition form the set of hyperplanes of a paving matroid of rank $d+1$. 
\end{proposition}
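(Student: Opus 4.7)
My plan is to argue both directions directly from the matroid axioms, with the main work carried by the uniqueness clause in the definition of a $d$-partition.

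For the forward direction, let $M$ be a rank-$(d+1)$ paving matroid on $E$ with hyperplane set $\mathcal{H}$. Since all circuits have size at least $d+1$, every $d$-subset of $E$ is independent of rank $d$, and so its closure is a rank-$d$ flat, i.e.\ a hyperplane. If $H$ is any hyperplane containing a $d$-subset $D$, then $\overline{D}\subseteq H$ and both have rank $d$, forcing $H=\overline{D}$. This proves existence and uniqueness of the hyperplane containing each $d$-subset, and every hyperplane has rank (hence size) at least $d$. Since $E$ has rank $d+1\neq d$ and $\mathcal{H}$ is nonempty, we conclude $\mathcal{H}\neq\{E\}$.

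For the converse, let $\mathcal{S}$ be a non-trivial $d$-partition. Distinct members of $\mathcal{S}$ are necessarily incomparable: $S_1\subsetneq S_2$ would put any $d$-subset of $S_1$ into two distinct members. Define
\begin{align*}
\mathcal{I} \;=\; \{I\subseteq E : |I|\leq d\} \;\cup\; \{I\subseteq E : |I|=d+1,\; I\not\subseteq S \text{ for every } S\in\mathcal{S}\}.
\end{align*}
Hereditarity and non-emptiness are immediate; the only non-routine matroid axiom is augmentation, and it reduces to the case $|I|=d$, $|J|=d+1$. Let $S_I$ be the unique member of $\mathcal{S}$ containing $I$. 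If $J\setminus I\subseteq S_I$ then $J\subseteq S_I$, contradicting $J\in\mathcal{I}$; otherwise any $e\in(J\setminus I)\setminus S_I$ gives $I\cup\{e\}\not\subseteq S_I$, and by uniqueness $I\cup\{e\}$ lies in no member of $\mathcal{S}$, so $I\cup\{e\}\in\mathcal{I}$. Thus $\mathcal{I}$ defines a matroid.

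To finish, I must check that the resulting matroid has rank $d+1$ with hyperplane set exactly $\mathcal{S}$. Non-triviality of $\mathcal{S}$ yields incomparable $S_1,S_2\in\mathcal{S}$, and choosing a $d$-subset $D\subseteq S_1$ together with $e\in S_2\setminus S_1$ produces $D\cup\{e\}\in\mathcal{I}$ (again by uniqueness), so the rank is $d+1$. Each $S\in\mathcal{S}$ has rank $d$ (it contains a $d$-independent set but no $(d+1)$-independent subset by construction), and is a flat because for any $e\notin S$ and any $d$-subset $D\subseteq S$, uniqueness forces $D\cup\{e\}\in\mathcal{I}$, so $e\notin\overline{S}$; hence each $S$ is a hyperplane. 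Conversely, given a hyperplane $H$ and a $d$-basis $D\subseteq H$, the unique $S\in\mathcal{S}$ containing $D$ must equal $H$: for any $e\notin D$, one has $e\in H$ iff $D\cup\{e\}$ is dependent iff $D\cup\{e\}\subseteq S$ iff $e\in S$. The only mildly delicate step in the whole argument is the augmentation axiom, and it reduces directly to the uniqueness clause of the $d$-partition.
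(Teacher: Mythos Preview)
The paper does not supply its own proof of this proposition; it simply cites Oxley's textbook \cite[Prop.~2.1.21]{O}. Your argument is correct and is essentially the standard proof one finds there: in the forward direction you use that every $d$-subset is independent (by the paving hypothesis) and hence has a well-defined hyperplane closure, and in the converse you build the matroid by declaring the independent sets to be all sets of size at most $d$ together with the $(d+1)$-sets not contained in any member of $\mathcal{S}$, then verify the augmentation axiom via the uniqueness clause of the $d$-partition. The identification of hyperplanes with members of $\mathcal{S}$ is handled cleanly. There is nothing to compare against in the paper itself, and your write-up would serve as a self-contained replacement for the citation.
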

A paving matroid of rank $r$ is \emph{sparse} if its hyperplanes all have size $r-1$ or $r$. 

Recall that a \emph{hypergraph} $H$ consists of a set $V(H)$ of \emph{vertices} together with a set $E(H) \subseteq 2^{V(H)}$ of \emph{hyperedges}. We say that a hypergraph $H$ is \emph{$d$-uniform} if all hyperedges have size $d$. 

\begin{theorem}
Let $H$ be an $d$-uniform hypergraph on $[n]$, and  let $E = \{ 1, 1', \dots,  n,n' \}$. Then 
\begin{align*}
\displaystyle \mathcal{B}(V_{H}) = \binom{E}{2d} \setminus \{ e\cup e' : e \in E(H) \},
\end{align*} \noindent
where $e':= \{i' : i \in e\}$, is the set of bases of a sparse paving matroid $V_H$ of rank $2d$. 
\end{theorem}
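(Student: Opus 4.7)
The plan is to apply Proposition \ref{d-part} by exhibiting a non-trivial $(2d-1)$-partition of $E$ and checking that the resulting paving matroid has exactly the prescribed bases. Concretely, I will define
\[
\mathcal{S} = \{\,e \cup e' : e \in E(H)\,\} \;\cup\; \{\,S \in \tbinom{E}{2d-1} : S \not\subseteq e\cup e' \text{ for every } e \in E(H)\,\}
\]
and show that $\mathcal{S}$ is a non-trivial $(2d-1)$-partition of $E$.

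The key combinatorial observation is an intersection bound. For distinct $e_1, e_2 \in E(H)$, the partition of $E$ into primed and unprimed elements gives
\[
(e_1 \cup e_1') \cap (e_2 \cup e_2') = (e_1 \cap e_2) \cup (e_1' \cap e_2'),
\]
and since $e_1 \ne e_2$ are distinct $d$-subsets of $[n]$, each summand on the right has size at most $d-1$, so the intersection has size at most $2d-2$. This is the only nontrivial step; everything else is bookkeeping.

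From this bound I get the partition property: no $(2d-1)$-subset $S$ of $E$ can lie in two distinct sets of the form $e\cup e'$, and by construction, the $(2d-1)$-subsets listed in $\mathcal{S}$ in their own right are disjoint (as singletons) from those covered by some $e\cup e'$. Hence every $(2d-1)$-subset of $E$ lies in exactly one member of $\mathcal{S}$, and every member of $\mathcal{S}$ has cardinality at least $2d-1$. The partition is non-trivial as long as $|E| = 2n > 2d$, which holds in the cases of interest (the degenerate case $n=d$ produces no matroid and is excluded).

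Proposition \ref{d-part} then yields a paving matroid $V_H$ of rank $2d$ whose hyperplanes are exactly $\mathcal{S}$. The circuit-hyperplanes are the size-$2d$ members of $\mathcal{S}$, namely $\{e \cup e' : e \in E(H)\}$, so a $2d$-subset is a basis if and only if it is not of the form $e \cup e'$, matching $\mathcal{B}(V_H)$. Sparsity is immediate since every hyperplane in $\mathcal{S}$ has size $2d-1$ or $2d$.
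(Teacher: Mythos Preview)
Your proof is correct and follows the same approach as the paper: you define the same collection $\mathcal{S}$ and apply Proposition~\ref{d-part}. The paper's proof is terser, simply asserting that $\mathcal{S}$ is a $(2d-1)$-partition, whereas you supply the key intersection bound $|(e_1\cup e_1')\cap(e_2\cup e_2')|\le 2d-2$ that makes this work, and you spell out why the resulting bases and hyperplane sizes are as claimed.
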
 \noindent
\begin{figure}
\makebox[\linewidth][c]{%
\begin{subfigure}[b]{.5\textwidth}
\includegraphics[width=60mm]{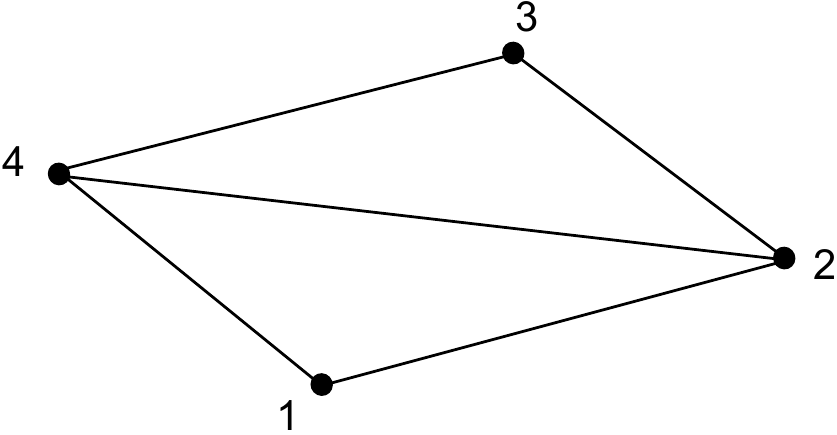}
\caption{$G$ (Diamond graph)}
\label{diamond}
\end{subfigure}%
\begin{subfigure}[b]{.5\textwidth}
\includegraphics[width=70mm]{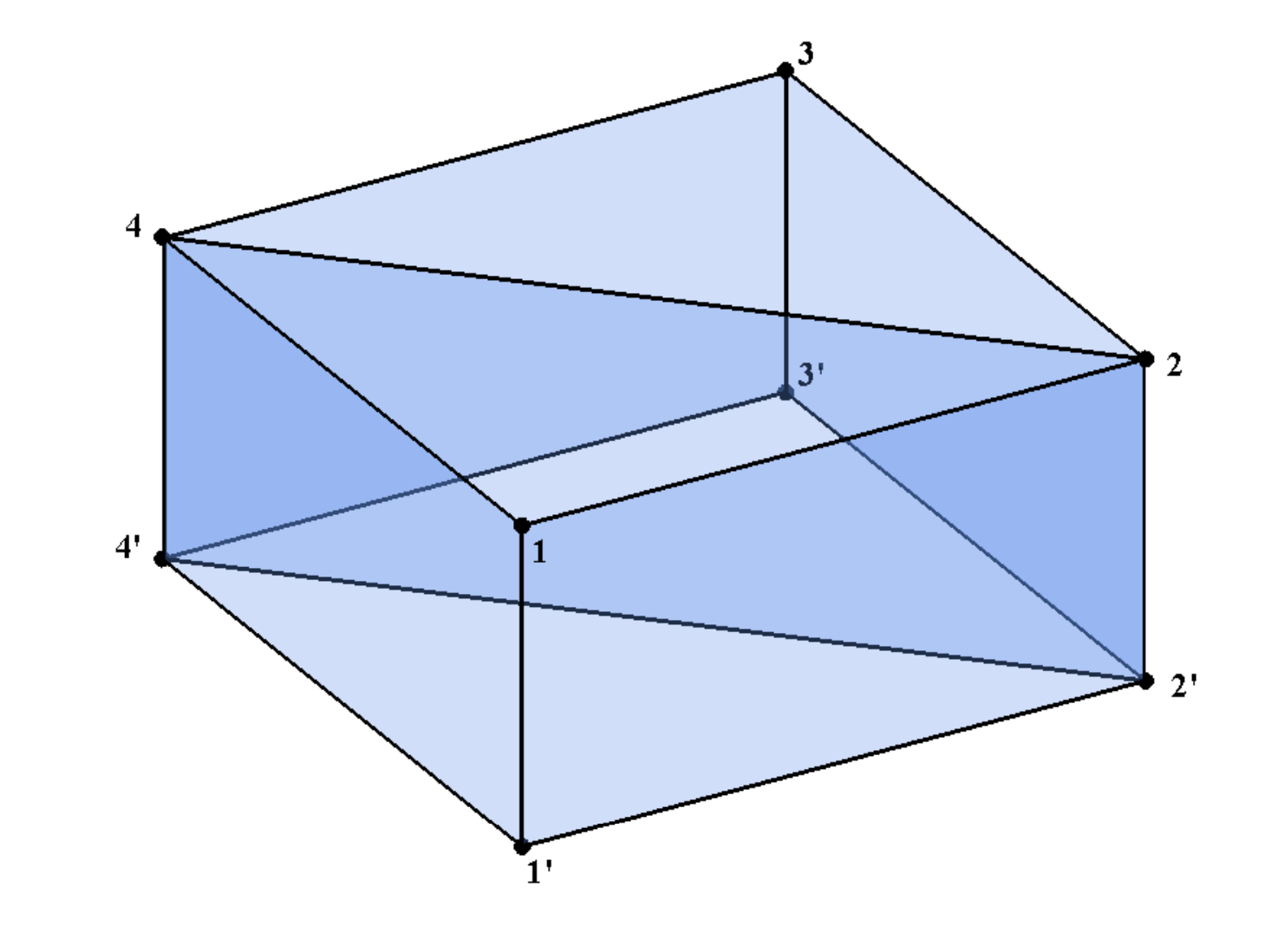}
\caption{$V_G \cong V_8$ (V\'amos matroid)}
\end{subfigure}
}
\end{figure}
\begin{figure}
\makebox[\linewidth][c]{%
\begin{subfigure}[b]{.5\textwidth}
\includegraphics[width=50mm]{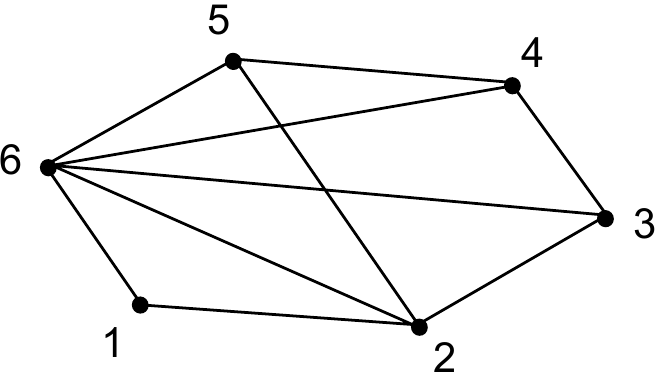}
\caption{A simple graph $G$}
\end{subfigure}%
\begin{subfigure}[b]{.5\textwidth}
\includegraphics[width=70mm]{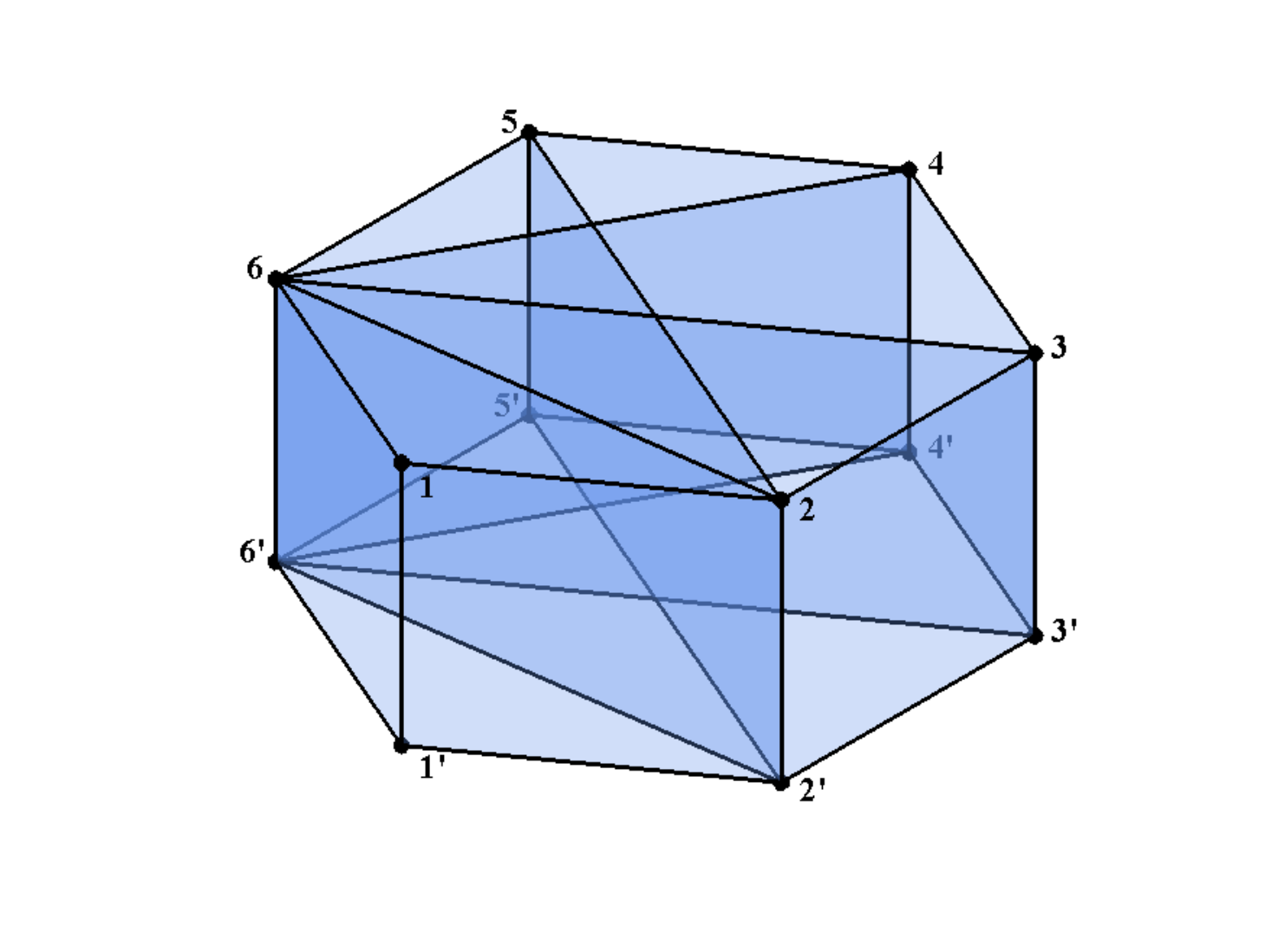}
\caption{The matroid $V_G$}
\end{subfigure}
}
\end{figure}
\begin{figure}
\makebox[\linewidth][c]{%
\begin{subfigure}[b]{.5\textwidth}
\includegraphics[width=50mm]{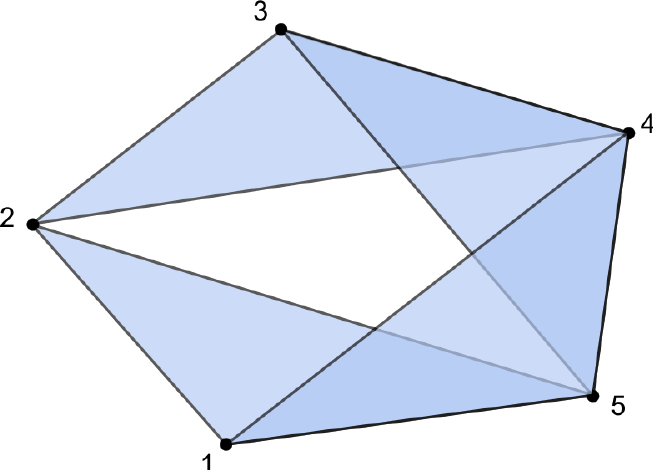}
\caption{A $3$-uniform hypergraph $H$}
\end{subfigure}%
\begin{subfigure}[b]{.5\textwidth}
\includegraphics[width=80mm]{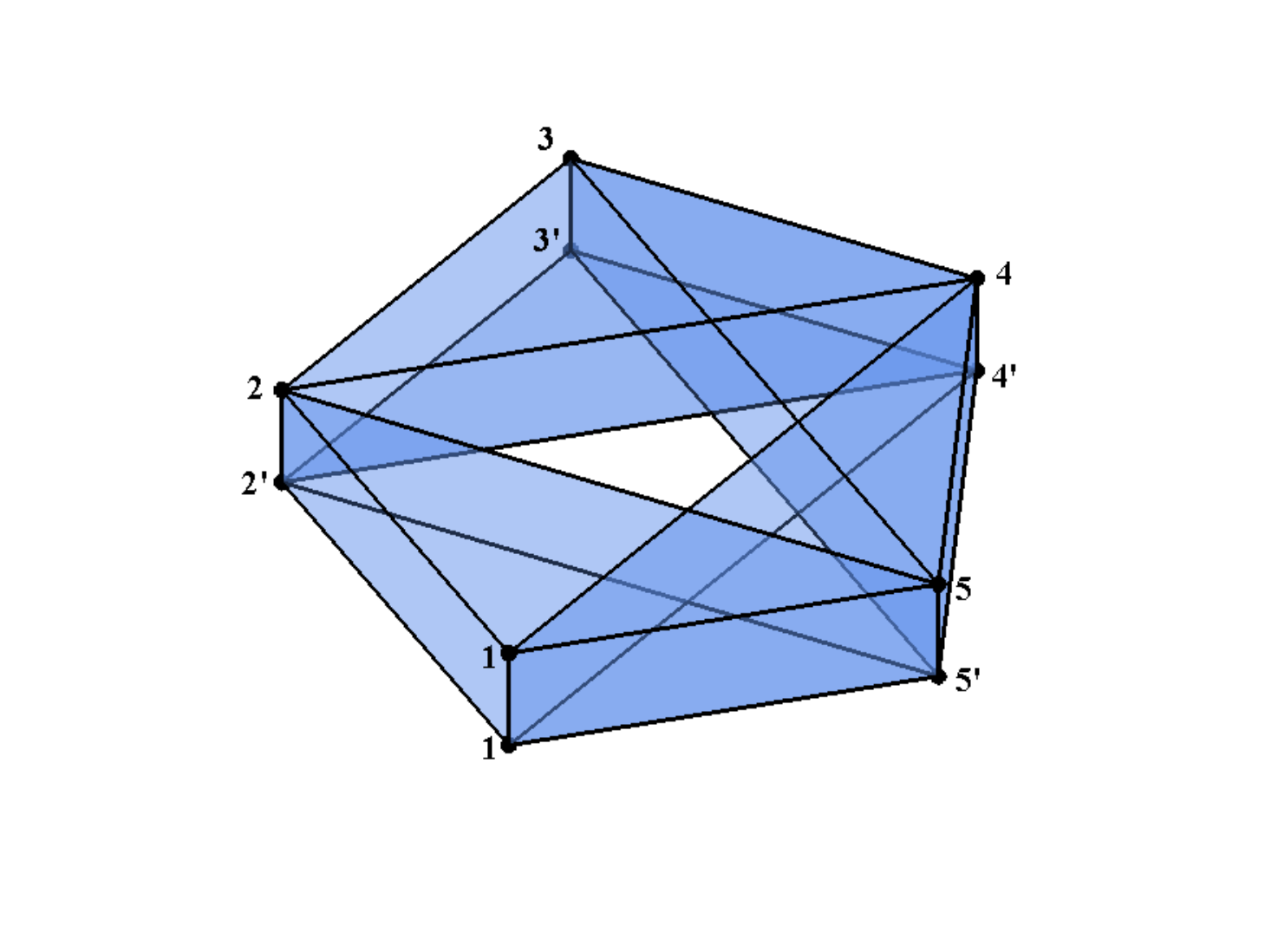}
\caption{The matroid $V_{H}$}
\end{subfigure}
}
\end{figure} \noindent
\begin{proof}
Let 
$$
\mathcal{S}= \{ e\cup e': e \in E(H)\} \cup \left\{ S \in \binom E {2d-1} : S \subset  e\cup e' \mbox{ for no }  e \in E(H) \right\}. 
$$
Then $\mathcal{S}$ is a $(2d-1)$-partition, and so it defines a sparse paving matroid with set of bases $\binom{E}{2d} \setminus \{ e\cup e' : e \in E(H) \}$ by Proposition \ref{d-part}. 
\end{proof}
Let $\mathcal{V} = \{V_H : H \text{ is an  $d$-uniform hypergraph on } [n], 0<d\leq n, n \in \NN\}$.

\begin{example}
If $G$ is the diamond graph (Fig \ref{diamond}) then $V_{G} = V_8$. Moreover $V_{\bar{K}_n} = U_{4, 2n}$, where $\bar{K}_n$ denotes the complement of the complete graph on $n$ vertices and $U_{4,2n}$ denotes the uniform rank $4$ matroid on $2n$ elements. The family $\{V_{2n}\}_{n \geq 4}$ studied by Burton \emph{et al.} \cite{BVY} corresponds to $V_{G_n}$ where $G_n$ is an $n$-cycle with edges $\{1,i\}$, $i=2,\ldots, n$, adjoined. 
\end{example} \noindent
We postpone the proofs of the next two theorems till Section \ref{proofsec}. 
\begin{theorem}\label{genvamoshyp}
All matroids in  $\mathcal{V}$ are hyperbolic, i.e., they all have the weak half-plane property. 
\end{theorem}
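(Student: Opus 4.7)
The plan is to exhibit, for each $d$-uniform hypergraph $H$ on $[n]$, a homogeneous multiaffine polynomial
$$
P_H(\xx) \in \RR_{\geq 0}[x_1, x_{1'}, \dots, x_n, x_{n'}]
$$
of degree $2d$ whose support equals $\mathcal{B}(V_H)$ and which is stable; Proposition~\ref{whpphyp} then yields that $V_H$ is hyperbolic. The natural candidate, respecting the symmetry of $V_H$ under swapping $x_i \leftrightarrow x_{i'}$ within each pair, is obtained from the $2d$-th elementary symmetric polynomial $e_{2d}$ in the $2n$ variables by cancelling off the monomials of the forbidden bases:
$$
P_H(\xx) \;=\; e_{2d}(\xx) \;-\; \sum_{e \in E(H)} \prod_{i \in e} x_i x_{i'}.
$$
Since $e_{2d}$ carries coefficient $1$ on every $2d$-subset of $E$, the subtraction precisely zeros out the forbidden bases $e\cup e'$ while leaving every other coefficient equal to $1$, so the support is $\mathcal{B}(V_H)$ as required. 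If stability of this naive choice fails, I would instead try a pair-symmetric weighting $\sum_B c_B \prod_{i\in B} x_i$ in which $c_B>0$ depends only on the intersection pattern of $B$ with each pair $\{i,i'\}$, which preserves the support while giving more freedom.

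To establish stability I would apply Lemma~\ref{hypbas}: a homogeneous polynomial with nonnegative real coefficients is stable if and only if it is hyperbolic with $\RR_+^{2n} \subseteq \Lambda_+$. It therefore suffices to check that for every $\xx \in \RR^{2n}$ the univariate polynomial $t \mapsto P_H(\one + t\xx)$ has only real zeros. The pair-involutions $x_i \leftrightarrow x_{i'}$ act trivially on $P_H$, and after the change of coordinates $s_i := x_i + x_{i'}$, $p_i := x_i x_{i'}$ the correction $\sum_{e \in E(H)} \prod_{i \in e} x_i x_{i'}$ becomes $\sum_{e \in E(H)} \prod_{i \in e} p_i$, while $e_{2d}$ expresses explicitly as a polynomial in the $s_i$ and $p_i$. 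A discriminant / interlacing analysis should then reduce real-rootedness of $P_H(\one + t\xx)$ in $t$ to a concrete nonnegativity statement for a symmetric polynomial in these pair-variables whose shape is controlled by $E(H)$.

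The main obstacle is the proof of this nonnegativity statement. As anticipated by the abstract, the required inequality should strengthen both the Laguerre--Tur\'an inequality (which controls $e_k^2$ versus $e_{k-1}e_{k+1}$ for real-rooted polynomials) and Jensen's inequality, and must hold uniformly over all $d$-uniform $H$. I would attempt it by first handling the case $|E(H)|=1$, which should reduce to a sharpened Newton-type inequality, and then treat general $H$ by induction on $|E(H)|$ or by a majorization argument exploiting the $d$-uniformity. A potential alternative, which might sidestep the inequality route, is to realize $P_H$ directly via Lemma~\ref{hypstab} as $h(\vv_0 + \sum_i x_i \vv_i + \sum_i x_{i'} \vv_{i'})$ for some hyperbolic polynomial $h$ and rank-one vectors $\vv_i, \vv_{i'} \in \Lambda_+(h)$ encoding $H$; this would yield stability for free, but shifts the difficulty to constructing the right $h$ and vectors.
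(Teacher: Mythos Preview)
Your plan has the right overall architecture but contains a concrete obstruction and is missing the key technical move.

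First, the naive candidate $P_H = e_{2d}(\xx) - \sum_{e \in E(H)} \prod_{i\in e} x_i x_{i'}$ is \emph{not} stable in general for $d\geq 3$: take $H$ the complete $3$-uniform hypergraph on $[6]$ and set $x_1=x_{1'}=t$, $x_2=x_{2'}=x_3=x_{3'}=-2$ and the remaining variables to $1$; the resulting univariate polynomial in $t$ has non-real zeros. So the bases generating polynomial itself does not witness WHPP, and your fallback (``some pair-symmetric weighting'') is the entire content of the theorem. The paper's construction of that weighting is not a perturbation of $P_H$; it is built from scratch. One first specializes $x_{i'}=x_i$, and rather than work with the resulting $f_{V_H}$, replaces it by a polynomial with the \emph{same support},
\[
W_{V_H}(\xx)=4\,e_{d+1}(\xx)e_{d-1}(\xx)+\tfrac{3}{d+1}\sum_{e\notin E(H)}\prod_{i\in e}x_i^2,
\]
proves this is stable, and only then polarizes back to a multiaffine polynomial via Grace--Walsh--Szeg\H{o}. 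The GWS step is what you are missing: it lets you drop to a polynomial quadratic in each $x_i$, prove stability there, and recover a multiaffine stable polynomial with the correct support automatically.

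Second, your proposed route to the inequality (discriminant analysis, then induction on $|E(H)|$ or majorization) is the wrong induction. The paper proves stability of $4e_{r+1}e_{r-1}+\tfrac{3}{r+1}M$ uniformly for \emph{all} $M=\sum_{|S|=r}a(S)\prod_{i\in S}x_i^2$ with $0\leq a(S)\leq 1$ at once; the hypergraph plays no role beyond selecting which $a(S)$ are $0$ or $1$. The discriminant reduction does occur, but it leads to the specific inequality
\[
\bigl(a_r e_{r-1}e_r - e_{r-2}e_{r+1}\bigr)^2 \;\geq\; C_r\, e_{r-2}e_r\, m_{2^r},
\qquad a_r=\tfrac{3(r-1)}{r+1},\ C_r=\tfrac{9(r-1)}{(r+1)^2},
\]
and this is proved by induction on $r$, not on $|E(H)|$. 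The inductive mechanism is a ``lift'' operation $e_k \mapsto (k+1)e_{k+1}$ (coming from differentiating a real-rooted polynomial), which preserves nonnegativity of symmetric polynomials and pushes the inequality from $r$ to $r+1$; along the way one needs the auxiliary facts that $m_{2^r}+\tfrac{2}{r}m_{2^{r-1}1^2}$ is a sum of squares and that $m_{2^r}\leq r e_r^2-(r+1)e_{r-1}e_{r+1}$. None of this is visible from the shape of $P_H$ alone, and an induction on edges will not produce it.
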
 \noindent
\begin{theorem}\label{genvamoshpp}
For each simple graph $G$,  $V_G$ has the half--plane property. 
\end{theorem}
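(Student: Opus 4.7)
The goal is to show that the bases generating polynomial
\[
P_G(\xx) \;=\; e_4(x_1, x_{1'}, \ldots, x_n, x_{n'}) \;-\; \sum_{\{i,j\} \in E(G)} x_i x_j\, x_{i'} x_{j'}
\]
is stable. Since $P_G$ is homogeneous with nonnegative integer coefficients, Lemma \ref{hypbas} reduces the task to showing $P_G$ is hyperbolic with respect to $\one \in \RR^{2n}$. Because the coefficients are already $0$ or $1$ by construction and the support is $\mathcal{B}(V_G)$, stability alone will upgrade the WHPP of Theorem \ref{genvamoshyp} to the half--plane property for $V_G$.

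My plan is to realize $P_G$ through Lemma \ref{hypstab}. That is, I would construct a hyperbolic polynomial $h$ on an auxiliary space, with hyperbolicity cone $\Lambda_+(h,\ee)$, and rank--one vectors $\vv_i, \vv_{i'}\in \Lambda_+(h,\ee)$ indexed by the vertices of $G$, such that
\[
h\!\left( \sum_{i=1}^n \bigl(x_i \vv_i + x_{i'} \vv_{i'}\bigr) \right) \;=\; c \cdot P_G(\xx), \qquad c>0.
\]
A natural candidate attaches to each vertex $i$ a $2\times 2$ PSD "frame" spanned by $\vv_i$ and $\vv_{i'}$, and couples these frames across every edge $\{i,j\}\in E(G)$ by a low--rank perturbation that forces the quadruple $\{i, i', j, j'\}$ into a common hyperplane. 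The polynomial $h$ can then be taken as the determinant of a symmetric matrix pencil on the direct sum of these blocks, modified by one rank--one correction per edge; the rank--one condition on the $\vv_i, \vv_{i'}$ together with Proposition \ref{whpphyp} ensures the resulting matroid is precisely $V_G$.

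The principal obstacle is a combinatorial bookkeeping identity: after expanding the left--hand side and collecting monomials in $\xx$, one must verify that the coefficient of $\prod_{i\in B} x_i$ equals the same positive constant $c$ for every $B\in\mathcal{B}(V_G)$ and vanishes on each non--basis $e \cup e'$ with $e \in E(G)$. This reduces to nonnegativity/equality statements for certain symmetric polynomial expressions, exactly the kind of inequalities (strengthened Laguerre--Tur\'an and Jensen--type) announced in the introduction and deferred to Section \ref{proofsec}. The simple--graph hypothesis plays a decisive role here: for two distinct edges $\{i,j\}, \{i,k\} \in E(G)$ the forbidden $4$--sets $\{i,j,i',j'\}$ and $\{i,k,i',k'\}$ share only the pair $\{i, i'\}$, so the per--edge corrections do not overlap and the coefficients remain in $\{0,1\}$. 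For $d$--uniform hypergraphs with $d\geq 3$, the same construction would still furnish hyperbolicity (hence WHPP, as in Theorem \ref{genvamoshyp}), but overlapping corrections would only guarantee nonnegative integer coefficients rather than $0/1$ ones, which is why Theorem \ref{genvamoshpp} is stated for simple graphs.
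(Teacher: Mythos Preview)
Your plan has a structural obstruction that cannot be repaired. You propose $h$ to be the determinant of a symmetric matrix pencil and $\vv_i, \vv_{i'}$ to be rank--one vectors in its hyperbolicity cone, i.e., rank--one PSD matrices. But as the example preceding Proposition~\ref{whpphyp} records, the hyperbolic matroid carried by rank--one PSD matrices under the determinant is precisely the \emph{linear} matroid they span. Thus an identity $h\bigl(\sum_i x_i\vv_i + x_{i'}\vv_{i'}\bigr) = c\,P_G(\xx)$ would force $V_G$ to be representable over $\RR$. Already for the diamond graph, $V_G = V_8$ is the V\'amos matroid, which violates Ingleton's inequality (Lemma~\ref{Ingin}) and is not representable over any field; so no such $h$ and $\vv_i$ exist. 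Adding ``one rank--one correction per edge'' does not escape this: as long as $h$ remains a determinant and the $\vv_i$ remain rank one in $\Lambda_+$, the resulting matroid is linear. Your account of why simple graphs are special (non--overlapping edge corrections keeping coefficients in $\{0,1\}$) is accordingly not the operative mechanism.

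The paper proceeds without any external representation. One restricts to the diagonal $x_{i'}=x_i$, obtaining $f_{V_G}$, and Lemma~\ref{elemsymcalc} rewrites this as a sum $4\sum_j e_{r+2j+1}e_{r-2j-1} + N$. For $r=2$ the sum collapses to the single term $4e_3(\xx)e_1(\xx)$, and since $3/(r+1)=1$ when $r=2$, one has $f_{V_G} = W_{V_G}$ on the nose, where $W_{V_G}$ is the polynomial of Theorem~\ref{basegenstab}. That theorem (proved via the homotopy Lemma~\ref{garden} together with the inequalities of Lemmas~\ref{turan} and~\ref{eng}) shows $W_{V_G}$ is stable. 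The Grace--Walsh--Szeg\H{o} theorem then polarizes $f_{V_G}$ back to the multiaffine $h_{V_G}$, which is therefore stable with its original $0/1$ coefficients. The simple--graph hypothesis enters only through the numerical coincidence $W_{V_G}=f_{V_G}$ at $r=2$; for $r\ge 3$ one matches supports but not coefficients, whence WHPP (Theorem~\ref{genvamoshyp}) but not HPP.
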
 
If $G$ contains the Diamond graph as an induced subgraph then the rank function of $V_G$ fails to satisfy Ingleton's inequality, and thus $V_G$ is hyperbolic but not modularly represented. 

There is no full analogue of Theorem \ref{genvamoshpp} in the hypergraph setting.  To see this let $H$ be the complete $3$-uniform hypergraph on $[6]$. Then setting $x_1=x_{1'}=t$, $x_2=x_{2'}=x_3=x_{3'}=-2$, and the remaining variables to $1$ in the bases--generating polynomial yields a polynomial in $t$ with non-real zeros. Hence $V_{H}$ does not have the half-plane property. Clearly if $V_8$ is a minor of $V_H$ then $V_H$ cannot be representable.
Below we give an example of a non-representable matroid $V_H$ with no V\'amos minor. Hence this constitutes a genuinely new instance of a hyperbolic matroid in the family which is not representable. 
\begin{example}
The following linear rank inequality in six variables was identified by Dougherty \emph{et al.} \cite{DFZ}
\begin{align*}
\displaystyle &r(A \cup D) + r(B \cup C) + r(C \cup E) + r(E \cup F) + r(B \cup D \cup F) + r(A \cup B \cup C \cup D) + \\ &r(A \cup B \cup C \cup E) + r(A \cup C \cup E \cup F) + r(A \cup D \cup E \cup F)  \leq \\ &r(A \cup B \cup C) + r(A \cup B \cup D) + r(A \cup C \cup E) + r(A \cup D \cup F) + r(A \cup E \cup F) + \\ &r(B \cup C \cup D) + r(B \cup C \cup E) + r(C \cup E \cup F) + r(D \cup E \cup F).
\end{align*} \noindent
This inequality is satisfied by all polymatroids $r$ representable over some field, where $r : 2^{[n]} \rightarrow \NN$, $n \in \NN$ and $A,B,C,D,E, F \subseteq [n]$. 
We proceed by designing a 3-uniform hypergraph $H$ on $[6]$ such that $V_H$ violates the above inequality. Let
\begin{align*}
\displaystyle A = \{1,1'\}, \thickspace B = \{ 2,2' \}, \thickspace C = \{3,3'\}, \thickspace D = \{4,4'\}, \thickspace E = \{5,5'\}, \thickspace F = \{6,6'\}.
\end{align*} \noindent
By taking the hypergraph $H$ with edges 
\begin{align*}
\displaystyle \{1,2,3\}, \{1,2,4\}, \{1,3,5\}, \{1,4,6\},\{1,5,6\}, \{2,3,4\},\{2,3,5\},\{3,5,6\},\{4,5,6\}
\end{align*} \noindent
we see that $V_H$ violates the above inequality. One checks that $V_8$ is not a minor of $V_H$. 
\end{example}
\section{Consequences for the generalized Lax conjecture} \noindent
Helton and Vinnikov \cite{HV} conjectured  that if a polynomial  $h \in \mathbb{R}[x_1, \dots, x_n]$ is hyperbolic with respect to $\ee = (e_1, \dots, e_n) \in \RR^n$, then there exist positive integers $M,N$ and a linear polynomial $\ell(\xx) \in \mathbb{R}[x_1, \dots, x_n]$ which is positive on $\Lambda_{++}(h,\ee)$ such that 
\begin{align*}
\displaystyle \ell(\xx)^{M-1}h(\xx)^N = \det \left ( \sum_{i=1}^n x_i A_i \right )
\end{align*} \noindent
for some symmetric matrices $A_1, \dots, A_n$ such that $e_1A_1 + \cdots + e_nA_n$ is positive definite.
In \cite{B} the second author used the bases generating polynomial $h_{V_8}$ of the V\'amos matroid to prove that there is no linear polynomial $\ell(\xx) \in \mathbb{R}[x_1, \dots, x_n]$ which is nonnegative on the hyperbolicity cone of $h_{V_8}$ and positive integers $M,N$ such that
\begin{align*}
\ell(\xx)^{M-1} h_{V_8}(\xx)^N = \det\left (\sum_{i=1}^8 x_i A_i \right )
\end{align*} \noindent
for some symmetric matrices $A_1, \dots, A_8$ with $e_1A_1 + \cdots + e_8A_8$ positive definite. We will here construct further ``counterexamples" that preclude more general factors $q(\xx)$ in \eqref{qhd}. First we prove two lemmata of matroid theoretic nature. If $r : 2^{E} \rightarrow \NN$ is a polymatroid and $A \subseteq E$, we say that $A$ is \emph{spanning} if $r(A)=r(E)$.  Moreover $A \subset E$ is a \emph{hyperplane} if it is a maximal non--spanning set.  
\begin{lemma} \label{nonspanupbnd}
For $n,r,c \geq 1$, let $\mathcal{P}(n,r,c)$ be the family of all rank at most $r$ polymatroids on $n$ elements such that each hyperplane has at most $r-1+c$ elements. If $\alpha(n,r,c)$ denotes the maximal number of non-spanning sets of size $r$ taken over all matroids in $\mathcal{P}(n,r,c)$, then 
\begin{equation}\label{anrc}
\alpha(n,r,c) \leq c\binom n {r-1}.
\end{equation}
\end{lemma}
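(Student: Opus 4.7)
I would prove the estimate by induction on $n$ via a deletion--contraction reduction whose combinatorial accounting is Pascal's identity $\binom{n}{r-1}=\binom{n-1}{r-1}+\binom{n-1}{r-2}$. Fix a matroid $M\in\mathcal{P}(n,r,c)$, let $\mathcal{N}$ denote its set of non-spanning $r$-subsets, and dispatch the base case $n\leq r$ trivially (there is at most one $r$-subset of $E$, and if $r(E)=r$ it is spanning). For $n>r$, pick an element $i\in [n]$ and split $\mathcal{N}=\mathcal{N}^-\sqcup\mathcal{N}^+$ according to whether the $r$-subset avoids or contains $i$.

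The preferred choice is $i$ neither a loop nor a coloop, so $r(\{i\})=1$ and $r(E\setminus\{i\})=r$. A direct verification shows that every hyperplane $H'$ of the deletion $M\setminus i$ is either a hyperplane of $M$ itself (when $r(H'\cup\{i\})=r$) or of the form $H\setminus\{i\}$ for a hyperplane $H$ of $M$ containing $i$ (when $r(H'\cup\{i\})<r$, in which case $H'\cup\{i\}$ is itself maximal non-spanning in $M$); both types have size at most $r-1+c$, placing $M\setminus i$ in $\mathcal{P}(n-1,r,c)$ and giving $|\mathcal{N}^-|\leq c\binom{n-1}{r-1}$ by induction. The map $A\mapsto A\setminus\{i\}$ identifies $\mathcal{N}^+$ with the non-spanning $(r-1)$-subsets of the contraction $M/i$, a rank $r-1$ matroid on $n-1$ elements whose hyperplanes are the sets $H\setminus\{i\}$ for $H$ a hyperplane of $M$ containing $i$, hence of size at most $(r-1)-1+c$; thus $M/i\in\mathcal{P}(n-1,r-1,c)$ and $|\mathcal{N}^+|\leq c\binom{n-1}{r-2}$ by induction. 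Adding the two estimates and applying Pascal's identity yields $|\mathcal{N}|\leq c\binom{n}{r-1}$.

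The main obstacle is securing such an $i$. Every loop lies in every hyperplane (otherwise the loop could be adjoined to the hyperplane without changing rank, contradicting maximality), so loops number at most $r-1+c$, while coloops number at most $r$ since they exhaust the rank. If $n>r$ and no non-loop non-coloop element is available, at least one element is a loop; taking $i$ to be a loop one finds that $M\setminus i\in\mathcal{P}(n-1,r,c-1)$ (every hyperplane of $M$ contains $i$ and shrinks by exactly one upon deletion), while $\mathcal{N}^+$ is in bijection with all $(r-1)$-subsets of $E\setminus\{i\}$, contributing $\binom{n-1}{r-1}$. Induction on the pair $(n,c)$ then gives $|\mathcal{N}|\leq(c-1)\binom{n-1}{r-1}+\binom{n-1}{r-1}=c\binom{n-1}{r-1}\leq c\binom{n}{r-1}$. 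The residual scenario, in which every element is a coloop, forces $n\leq r$ and falls under the base case.
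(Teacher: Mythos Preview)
Your argument is correct and follows the same deletion--contraction induction (with Pascal's identity) as the paper. The paper streamlines your edge cases by simply choosing any non-loop $i$: if $i$ happens to be a coloop then $E\setminus\{i\}$ is itself a hyperplane, so $n-1\le r-1+c$, and for $r\ge 2$ this yields $\binom{n}{r}\le c\binom{n}{r-1}$ directly---eliminating both your separate loop analysis and your auxiliary induction on $c$.
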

\begin{proof}
If $r=1$, then each hyperplane has at most $c$ elements, i.e., there are at most $c$ loops so that 
$\alpha(n,r,c) = c$ as desired. The proof is by induction over $n \geq 1$ where $r \geq 1$. The lemma is trivially true for $n=1$. 

Let $\mathcal{P} \in \mathcal{P}(n,r,c)$, where $n, r \geq 2$. If $n \leq r$, then \eqref{anrc} is trivially true. Assume $n>r$.
Let $i$ be a non-loop of $\mathcal{P}$. If $r(E\setminus i) < r(E)$, then $E\setminus i$ is a hyperplane and hence $n-1 \leq r-1+c$, so that $\binom n r \leq c\binom n {r-1}$. Hence we may assume $r(E\setminus i) = r(E)>0$. 

If $S$ is a non-spanning $r$-set of $\mathcal{P}$, then either $S$ is a non-spanning $r$-set of $\mathcal{P} \setminus i$, or $S\setminus i$ is a non-spanning $(r-1)$-set of $\mathcal{P} /i$. Hence $\mathcal{P} \setminus i \in \mathcal{P}(n-1,r,c)$ and 
$\mathcal{P} / i \in \mathcal{P}(n-1,r-1,c)$, and thus
\begin{align*}
\alpha(n,r,c) &\leq \alpha(n-1,r,c)+ \alpha(n-1,r-1,c) \\
     &\leq c\binom {n-1} {r-1}+c\binom {n-1} {r-2} = c\binom {n} {r-1},
\end{align*}
by induction.      
\end{proof} \noindent
\begin{lemma} \label{upbnd}
Let $\mathcal{P}_i$, $i=1,\ldots, s$, be  polymatroids on $[n]$ of rank at most $k-1$ such that no hyperplane has more than $k$ elements. If $n \geq (2s+1)k-1$, then there is a set $S$ of size $k$ such that there are at least two $(k-1)$-subsets of $S$ that are spanning in all $\mathcal{P}_i$, $i=1,\ldots, s$. 
\end{lemma}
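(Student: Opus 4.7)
The plan is a two-step counting argument: first bound the number of $(k-1)$-sets that fail to be spanning in at least one of the given polymatroids, then double-count to force two good $(k-1)$-subsets inside a common $k$-set.

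For the first step I would apply Lemma \ref{nonspanupbnd} separately to each $\mathcal{P}_i$ with parameters $r=k-1$ and $c=2$. The choice of $c$ is forced by the hypothesis: $r-1+c = k$ matches the given upper bound on hyperplane sizes, and Lemma \ref{nonspanupbnd} is tighter for smaller $c$. This yields at most $2\binom{n}{k-2}$ non-spanning $(k-1)$-subsets in each $\mathcal{P}_i$. By the union bound, the collection $\mathcal{T}$ of $(k-1)$-subsets of $[n]$ that are spanning in \emph{every} $\mathcal{P}_i$ satisfies
$$|\mathcal{T}| \geq \binom{n}{k-1} - 2s\binom{n}{k-2}.$$

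For the second step I would assume, toward a contradiction, that every $k$-subset $S$ of $[n]$ contains at most one member of $\mathcal{T}$. Double-counting pairs $(T,S)$ with $T \in \mathcal{T}$, $T \subset S$, and $|S|=k$, and using that each $T$ lies in exactly $n-k+1$ such $S$, gives $(n-k+1)|\mathcal{T}| \leq \binom{n}{k}$. Substituting the bound for $|\mathcal{T}|$ and simplifying via the identities $(n-k+1)\binom{n}{k-1} = k\binom{n}{k}$ and $\binom{n}{k-1}/\binom{n}{k-2} = (n-k+2)/(k-1)$, the inequality reduces (for $k\geq 2$) to $n-k+2 \leq 2sk$, i.e.\ $n \leq (2s+1)k-2$, contradicting the hypothesis $n \geq (2s+1)k-1$.

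The main obstacle is essentially bookkeeping: ensuring that the strict numerical gap afforded by $n \geq (2s+1)k-1$ survives the binomial manipulations and produces the strict pigeonhole contradiction. The identities above do this on the nose. One tacitly assumes $k \geq 2$, since for $k=1$ a $1$-set has only a single $(k-1)$-subset and the statement is vacuous.
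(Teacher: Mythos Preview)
Your proposal is correct and follows essentially the same approach as the paper: both arguments apply Lemma~\ref{nonspanupbnd} with $r=k-1$, $c=2$, and then double-count incidences between $(k-1)$-subsets and $k$-sets to reach the inequality $(k-1)\binom{n}{k} \leq 2s(n-k+1)\binom{n}{k-2}$, which simplifies to $n \leq (2s+1)k-2$. The only cosmetic difference is that the paper counts pairs $(S,T)$ with $S$ \emph{non}-spanning in some $\mathcal{P}_i$, while you count pairs with $T\in\mathcal{T}$ spanning in all $\mathcal{P}_i$; these are complementary counts yielding the identical inequality.
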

\begin{proof}
Suppose the conclusion is not true. Let 
$$
A=\left\{ (S,T) : \binom {[n]} {k-1} \ni S \subset T \in \binom {[n]} {k}, S \mbox{ is not spanning in } \mathcal{P}_i \mbox{ for some } i \in [s] \right\}. 
$$
Then
$$|A| \geq (k-1) \binom n k.
$$ 
Furthermore by Lemma \ref{nonspanupbnd} we have
\begin{align*}
\displaystyle |A| &= \# \left \{ S \subseteq \binom{n}{k-1} : S \mbox{ is not spanning in } \mathcal{P}_i \mbox{ for some } i \in [s] \right \} \cdot (n-k+1) \\ &\leq s \alpha(n,k-1,2)(n-k+1) \\ & \leq 2s\binom{n}{k-2}(n-k+1).
\end{align*}
Hence 
$$
(k-1) \binom n k \leq 2s\binom{n}{k-2}(n-k+1).
$$
Solving for $n$ gives $n \leq (2s+1)k-2$, which proves the lemma. 
\end{proof} \noindent
Given positive integers $n$ and $k$, consider the $k$-uniform hypergraph $H_{n,k}$ on $[n+2]$ containing all hyperedges $e \in \binom{[n+2]}{k}$ except those for which $\{n+1, n+2\} \subseteq e$. By Theorem \ref{genvamoshyp} the matroid $V_{H_{n,k}}$ is hyperbolic and therefore has a stable weighted bases generating polynomial $h_{V_{H_{n,k}}}$ by Proposition \ref{whpphyp}. The polynomial $h_{n,k} \in \mathbb{R}[x_1, \dots, x_{n+2}]$ obtained from the multiaffine polynomial $h_{V_{H_{n,k}}}$ by identifying the variables $x_i$ and $x_{i'}$ pairwise for all $i \in [n+2]$ is stable. Hence by Lemma \ref{hypbas} we have $\mathbb{R}_+^{n+2} \subseteq \Lambda_+(h_{n,k})$ so $h_{n,k}$ is hyperbolic with respect to $\mathbf{1}$. 
\begin{theorem} \label{counterex}
Let $n$ and $k$ be  a positive integers. Suppose there exists a positive integer $N$ and a hyperbolic polynomial $q(\xx)$ such that
\begin{align}\label{qhnk}
\displaystyle q(\mathbf{x}) h_{n,k}(\mathbf{x})^N = \det \left ( \sum_{i=1}^{n+2} x_i A_i \right )
\end{align} \noindent
with $\Lambda_+(h_{n,k}) \subseteq \Lambda_+(q)$ for some symmetric matrices $A_1, \dots, A_{n+2}$ such that $A_1 + \cdots + A_{n+2}$ is positive definite and
\begin{align*}
\displaystyle q(\mathbf{x}) = \prod_{i=1}^s p_j(\mathbf{x})^{\alpha_i}
\end{align*} \noindent
for some irreducible hyperbolic polynomials $p_1, \dots, p_s \in \mathbb{R}[x_1, \dots, x_{n+2}]$ of degree at most $k-1$ where $\alpha_1,\dots, \alpha_s$ are positive integers. Then
\begin{align*}
\displaystyle n < (2s+1)k-1.
\end{align*} \noindent
\end{theorem}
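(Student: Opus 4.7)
The plan is to argue by contraposition: assume $n \geq (2s+1)k - 1$ and derive a contradiction by routing the $s$ irreducible factors $p_1, \ldots, p_s$ through Lemma \ref{upbnd}. The structural pressure will come from comparing the hyperbolic polymatroids of the individual factors against the rank function of the matrix representation $\det(\sum_i x_i A_i)$, which is necessarily a linear polymatroid and therefore satisfies every linear rank inequality.

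First I set up the associated polymatroids. Since $h_{n,k}$ has nonnegative coefficients, $\mathbb{R}_+^{n+2} \subseteq \Lambda_+(h_{n,k})$ by Lemma \ref{hypbas}, and by hypothesis $\Lambda_+(h_{n,k}) \subseteq \Lambda_+(q) = \bigcap_j \Lambda_+(p_j)$. Hence each standard basis vector $\delta_i$ lies in every relevant hyperbolicity cone, and with $\VV = (\delta_1, \ldots, \delta_{n+2})$ one obtains hyperbolic polymatroids $r_{p_j}, r_q, r_{h_{n,k}}, r_{\det}$ on $[n+2]$, of ranks at most $\deg p_j \leq k-1$, $\deg q$, $2k$ and $\deg q + 2Nk$ respectively, satisfying $r_{\det} = \sum_j \alpha_j r_{p_j} + N r_{h_{n,k}}$ by additivity of hyperbolic rank under products. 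The $A_i$ are positive semidefinite (they parametrize the rays $\delta_i$ of the hyperbolicity cone of $\det$), and because $\rk(\sum_{i \in S} A_i) = \dim\sum_{i \in S} \mathrm{col}(A_i)$ for PSD matrices, $r_{\det}$ is the rank function of a linear representation of $[n+2]$ and so satisfies every linear rank inequality (in particular Ingleton's). A direct computation using the description of $\mathcal{B}(V_{H_{n,k}})$ yields
\[
r_{h_{n,k}}(S) = \begin{cases} 2|S|, & |S| \leq k-1,\\ 2k-1, & |S| = k \text{ and } \{n+1, n+2\} \not\subseteq S,\\ 2k, & \text{otherwise,}\end{cases}
\]
exhibiting a ``defect'' of $1$ precisely at the $k$-subsets of $[n]$.

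The technical heart of the argument is the claim that each restriction $r_{p_j}|_{[n]}$ has no hyperplane of size greater than $k$. Granting this, Lemma \ref{upbnd} applied to $r_{p_1}|_{[n]}, \ldots, r_{p_s}|_{[n]}$ on the ground set $[n]$ of size $\geq (2s+1)k - 1$ produces a $k$-subset $S \subseteq [n]$ and two distinct $(k-1)$-subsets $T_1, T_2 \subset S$ with $r_{p_j}(T_1) = r_{p_j}(T_2) = r_{p_j}(S) = r_{p_j}([n])$ for every $j$, so $r_q(T_1) = r_q(T_2) = r_q(S) =: \alpha$. The formula for $r_{h_{n,k}}$ (with $S \subseteq [n]$) then gives the exact values $r_{\det}(T_i) = \alpha + 2N(k-1)$, $r_{\det}(S) = \alpha + N(2k-1)$ and $r_{\det}(T_1 \cap T_2) = r_q(T_1 \cap T_2) + 2N(k-2)$, together with analogous expressions for the other subsets of $S$ and the elements of $[n] \setminus S$. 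The contradiction is then obtained by feeding these values into a suitable linear rank inequality valid for $r_{\det}$ (for instance the six-variable inequality of Dougherty, Freiling and Zeger appearing in Example 6.7, or a bespoke variant adapted to this configuration) whose reduction via $r_{\det} = \sum_j \alpha_j r_{p_j} + N r_{h_{n,k}}$ fails by a positive multiple of $N$, exploiting the ``missing one'' defect of $r_{h_{n,k}}$ at $S$ against the spanning identities just established for $r_q$.

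The principal obstacle is the hyperplane bound used to invoke Lemma \ref{upbnd}: converting the global factorization $q \cdot h_{n,k}^N = \det(\sum_i x_i A_i)$ into the local combinatorial statement that each $r_{p_j}|_{[n]}$ has all hyperplanes of size at most $k$. The uniformly defective rank values of $r_{h_{n,k}}$ at $k$-subsets of $[n]$, together with the linearity of $r_{\det}$ and submodularity of each $r_{p_j}$, should constrain the individual factors sufficiently to yield this bound; but turning this heuristic into a precise argument is the delicate step of the proof, while the remainder is essentially an application of Lemma \ref{upbnd} and a single linear rank inequality.
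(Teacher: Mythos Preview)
You have the right skeleton—contradiction via Lemma \ref{upbnd} and a linear rank inequality for $r_{\det}$—but you have inverted the difficulty, and the step you gloss over is the one that actually carries the proof.

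The hyperplane bound you call ``the principal obstacle'' is in fact immediate from the hypothesis $\Lambda_+(h_{n,k}) \subseteq \Lambda_+(q)$. Any $S \subseteq [n]$ with $|S| \geq k+1$ satisfies $|S \cup S'| \geq 2k+2$ in the sparse paving matroid $V_{H_{n,k}}$ of rank $2k$, so $S$ is spanning for $r_{h_{n,k}}$; thus $\sum_{i\in S}\delta_i \in \Lambda_{++}(h_{n,k}) \subseteq \Lambda_{++}(p_j)$ and $S$ is spanning for every $r_{p_j}$. No ``delicate'' conversion of the factorization is needed; cone containment does all the work.

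The genuine gap is the contradiction step. You never specify which linear rank inequality to use, nor which sets to plug in, and crucially you never use the vertices $n+1,n+2$—yet they are indispensable. The paper uses Ingleton's inequality with a specific configuration: writing $Z = T_1 \cap T_2$ and $T_1 = Z \cup\{x\}$, $T_2 = Z\cup\{y\}$, set
\[
A = Z \cup \{n+1\},\quad B = Z \cup \{n+2\},\quad C = T_1,\quad D = T_2.
\]
The point of this choice is that \emph{every} set on the left side of Ingleton, namely $A\cup B$, $A\cup C\cup D$, $B\cup C\cup D$, $C$, $D$, is spanning for every $r_{p_j}$: the first three because they are spanning for $r_{h_{n,k}}$ (either they contain $\{n+1,n+2\}$ or they have size $k+1$), and $C,D$ by the output of Lemma \ref{upbnd}. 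Hence the Ingleton expression is trivially $\geq 0$ for each $r_{p_j}$. Meanwhile all five sets on the right side are $k$-subsets of $[n+2]$ \emph{not} containing $\{n+1,n+2\}$, so $r_{h_{n,k}}$ equals $2k-1$ on each, and one computes that the Ingleton expression equals $+1$ for $r_{h_{n,k}}$. Summing with the weights $\alpha_j$ and $N$ shows $r_{\det}$ violates Ingleton, contradicting its linearity. Without bringing $n+1$ and $n+2$ into the configuration you cannot simultaneously make the left-side sets spanning for $r_{h_{n,k}}$ (and hence for the $r_{p_j}$) while keeping the right-side sets deficient, so your vague appeal to ``a suitable linear rank inequality'' on subsets of $S$ and $[n]\setminus S$ does not close.
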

\begin{proof}
Suppose the hypotheses are satisfied and $n \geq (2s+1)k-1$.
Let $r_0 : 2^{[n]} \rightarrow \NN$ be the hyperbolic polymatroid defined by $h_{n,k}$ and $\VV=(\delta_1,\ldots, \delta_n)$, where
$\delta_i$, $i \in [n]$ are the standard basis vectors. Hence 
$r_0(S)$ is the rank of $S \cup \{i': i \in S\}$ in the matroid $V_{H_{n,k}}$. Moreover, let $r_i : 2^{[n]} \rightarrow \NN$, $i \in [s]$, be the hyperbolic polymatroid defined by $p_i$ and $\VV=(\delta_1,\ldots, \delta_n)$. Any subset $S$ of $[n]$ of size at least $k+1$ is spanning for  $r_0$, and thus 
$\sum_{i \in S} \delta_i \in \Lambda_{++}(h_{n,k})$. Hence $\sum_{i \in S} \delta_i \in \Lambda_{++}(p_i)$, and thus $S$ is spanning with respect to $r_i$ for all $i \in [s]$. 
By Lemma \ref{upbnd}, since $n \geq (2s+1)k-1$, there exists a subset $T \subseteq [n]$ of size $k$ containing at least $2$ distinct subsets $X,Y$ of size $k-1$ with full rank with respect to all hyperbolic polymatroids $r_{i}$, $i = 1,\dots, s$.  Let $x,y \in T$ be the unique elements in $X,Y$, respectively, not contained in $Z = X \cap Y$. Define
$$
A=Z \cup \{n+1\}, \quad B=Z \cup \{n+2\}, \quad C=Z \cup \{x\}, \quad D=Z \cup \{y\}. 
$$ 
Now $A\cup B, A\cup C \cup D$ and $B\cup C \cup D$ have full rank with respect to $r_0$. Since $\Lambda_{++}(h_{n,k}) \subseteq \Lambda_{++}(p_i)$ for all $i$, we see that  $A\cup B, A\cup C \cup D$ and $B\cup C \cup D$ have full rank with respect to $r_i$ for all $i$. Hence the rank of each set to the left in the Ingleton inequality have full rank with respect to $r_{i}$, so that 
\begin{align*}
\displaystyle & r_{i}(A \cup B) + r_{i}(A \cup C \cup D) + r_{i}(C) + r_{i}(D) + r_{i}(B \cup C \cup D) \geq \\ & r_{i}(A \cup C) + r_{i}(A \cup D) + r_{i}(B \cup C) + r_{i}(B \cup D) + r_{i}(C \cup D)
\end{align*} \noindent
for $i = 1,\dots, s$. 
Note also that 
\begin{align*}
\displaystyle &r_{0}(A \cup B) + r_{0}(A \cup C \cup D) + r_{0}(C) + r_{0}(D) + r_{0}(B \cup C \cup D) = \\ &2k + 2k + (2k-2) + (2k-2) + 2k > (2k-1) + (2k-1)+ (2k-1) + (2k-1)+ (2k-1) = \\ &r_{0}(A \cup C) + r_{0}(A \cup D) + r_{0}(B \cup C) + r_{0}(B \cup D) + r_{0}(C \cup D).
\end{align*} \noindent
Thus $r_{0}$ violates the Ingleton inequality. 
Let $\mathcal{R}$ denote the representable polymatroid with rank function
\begin{align*}
\displaystyle r_{\mathcal{R}}(S) = \text{rank} \left ( \sum_{i \in S} A_i \right ).
\end{align*} \noindent
for all $S \subseteq [n]$. Then, by \eqref{qhnk}, 
$$
 r_{\mathcal{R}}(S) = \text{rank} \left ( \sum_{i \in S} A_i \right ) = \sum_{i = 1}^s \alpha_i r_{i}(S) + N r_0(S).
$$
 \noindent
Hence $r_{\mathcal{R}}$ violates Ingleton's inequality, a contradiction.
\end{proof} \noindent
Hence, for $n$ sufficiently large, $q$ in \eqref{qhd} either has an irreducible factor of large degree or is the product of many factors of low degree. 

Consider  
$$
h_{2,2} = x_1^2x_2^2 + 4(x_1+x_2+x_3+x_4)(x_1x_2x_3 + x_1x_2x_4 + x_1x_3x_4 + x_2x_3x_4).
$$
The polynomial $h_{2,2}$ comes from the bases generating polynomial of the V\'amos matroid under the restriction $x_i = x_{i'}$ for $i = 1, \dots, 4$. Kummer \cite{Kum} found real symmetric matrices $A_i$, $i = 1,\dots, 4$ with $A_1+ A_2+A_3+A_4$ positive definite and a hyperbolic polynomial $q$ of degree $3$ with $\Lambda_+(h_{2,2}) \subseteq \Lambda_+(q)$ such that
\begin{align*}
\displaystyle q(\xx)h_{2,2}(\xx) = \det \left ( x_1A_1 + x_2A_2 + x_3A_3 + x_4A_4 \right ),
\end{align*} \noindent
where 
\begin{align*}
\displaystyle q(\xx) = 32(2x_1 + 3x_2 + 3x_3 + 4x_4)(x_1x_2 + x_1x_3 + 2x_1x_4 + x_2x_4 + x_3x_4).
\end{align*} \noindent
If $s = 2$ and $k=3$ in Theorem \ref{counterex} it follows that there exists no linear and quadratic hyperbolic polynomials $\ell(\xx), q(\xx) \in \mathbb{R}[x_1, \dots, x_{16}]$ respectively such that 
$h_{13,3}(\xx) \in \mathbb{R}[x_1, \dots, x_{15}]$ has a positive definite representation of the form
\begin{align*}
\displaystyle \ell(\xx)q(\xx)h_{14,3}(\xx) = \det \left ( \sum_{i=1}^{16} x_iA_i \right )
\end{align*} \noindent
with $\Lambda_+(h_{14,3}) \subseteq \Lambda_+(\ell q)$.

\section{Nonnegative symmetric polynomials}
Recall that a polynomial $P(\xx) \in \RR[x_1,\ldots, x_n]$ is \emph{nonnegative} if $P(\xx) \geq 0$ for all $\xx \in \RR^n$, and it is \emph{symmetric} if it is invariant under the action (permuting the variables) of the symmetric group of order $n$. In this section we prove that certain symmetric polynomials are nonnegative. This is needed for the proof of Theorem \ref{genvamoshyp}.  The results are interesting in their own right, and they generalize several well known inequalities in the literature. 

Recall that a \emph{partition} of a natural number $d$ is a sequence $\lambda =(\lambda_1, \lambda_2, \ldots )$ of natural numbers such that $\lambda_1 \geq \lambda_2 \geq \cdots$ and $\lambda_1+\lambda_2+\cdots = d$. We write $\lambda \vdash d$ to denote that $\lambda$ is a partition of $d$. The \emph{length}, $\ell(\lambda)$, of $\lambda$ is the number of nonzero entries of $\lambda$. If $\lambda$ is a partition and $\ell(\lambda) \leq n$, then  the \emph{monomial symmetric polynomial}, $m_\alpha$, is defined as 
$$
m_\lambda(x_1,\ldots, x_n)=\sum x_1^{\beta_1}x_2^{\beta_2} \cdots x_n^{\beta_n}, 
$$
 where the sum is over all 
distinct permutations $(\beta_1, \beta_2, \ldots, \beta_n)$ of $(\lambda_1, \ldots, \lambda_n)$. If $\ell(\lambda)>n$, we set $m_\lambda(\xx) =0$. If $k_1,\ldots, k_\ell$ are distinct positive integers and 
$a_1, \ldots, a_\ell \in \NN$ we denote by $k_1^{a_1}k_2^{a_2} \cdots k_{\ell}^{a_\ell}$ the unique partition of  $a_1+\cdots + a_\ell$  with exactly $a_j$ coordinates equal to $k_j$ for $1 \leq j \leq \ell$. The $d$th \emph{elementary symmetric polynomial} is 
$
e_d(\xx)= m_{1^d}(\xx), 
$
and the $d$th \emph{power symmetric polynomial} is $p_d(\xx)= m_{d}(\xx)$.

Nonnegative symmetric polynomials  have been studied in several areas of mathematics, see \cite{BlRie,CL2,Foster-Krasikov} and the references therein. We will initially concentrate on nonnegative polynomials of the form 
\begin{equation}\label{2ek}
\sum_{k=0}^{2r}a_k e_k(\xx) e_{2r-k}(\xx), \quad \xx \in \RR^m, 
\end{equation}
where $r$ is a positive integer and $\{a_k\}_{k=0}^{2r} \subset \RR$. Hence these are the nonnegative symmetric polynomials spanned by $\{m_{2^k1^{2(r-k)}} : 0 \leq k \leq r\}$.  
A classical family of such nonnegative and symmetric polynomials was found already by Newton \cite{Newton}:
$$
\frac {e_r(\xx)^2}{{\binom n r}^2} - \frac {e_{r-1}(\xx)}{\binom n {r-1}} \frac {e_{r+1}(\xx)}{\binom n {r+1}} \geq 0, \quad \xx \in \RR^m, m \leq n, 1\leq r \leq n-1. 
$$
Letting $n \to \infty$ in Newton's inequalities we obtain the Laguerre--Tur\'an inequalities (see e.g. \cite{CC}):
$$
r e_r(\xx)^2 - (r+1)  e_{r-1}(\xx)e_{r+1}(\xx) \geq 0, \quad \xx \in \RR^m, m \geq 1. 
$$
A different but equivalent view on nonnegative symmetric polynomial is that of inequalities satisfied by the derivatives of a real--rooted polynomial: Let $\{a_k\}_{k=0}^{2m}$ be a sequence of real numbers. Then the polynomial \eqref{2ek}  is  nonnegative if and only if 
\begin{equation}\label{2fk}
\sum_{k=0}^{2r} a_k  \binom {2r} k  f^{(k)}(t) f^{(2r-k)}(t) \geq 0, \quad t \in \RR,  
\end{equation}
holds for all real--rooted polynomials $f$ of degree at most $m$. Indeed by translation invariance  \eqref{2fk} holds for all 
real--rooted polynomials $f$ of degree at most $m$ if and only if \eqref{2fk} holds at $t=0$ for all real--rooted polynomials $f$ of degree at most $m$. Hence if 
$f(t)= \prod_{j=1}^m (1+x_jt)$, then the left--hand--side of \eqref{2fk} at $t=0$ is the same as \eqref{2ek} up to a constant factor $(2r)!$. The following inequalities are due to Jensen \cite{Jensen}:
\begin{equation}\label{jens}
\sum_{k=0}^{2r} (-1)^{r+j} \binom {2r} k   f^{(k)}(t) f^{(2r-k)}(t) \geq 0, \quad t \in \RR,  
\end{equation}
for all real--rooted polynomials $f$. Jensen's inequality follows easily from a symmetric function identity as follows 
\begin{align*}
\sum_{r=0}^n m_{2^r}(\xx)t^{2r} &= \sum_{r=0}^n e_{r}(x_1^2,\ldots,x_n^2)t^{2r} \\
&= \prod_{j=1}^n(1+x_j^2t^2) = \prod_{j=1}^n(1+ix_jt) \prod_{j=1}^n(1-ix_jt) \\
&= \left(\sum_{k=0}^n i^ke_{k}(\xx)t^{k}\right) \left(\sum_{k=0}^n (-i)^ke_{k}(\xx)t^{k}\right) \\
&= \sum_{r=0}^n \left( \sum_{k=0}^{2r} (-1)^{k+r} e_k(\xx)e_{2r-k}(\xx) \right) t^{2r}.
\end{align*} \noindent
Clearly $m_{2r}(\xx) \geq 0$ for all $\xx \in \RR^n$, so that Jensen's inequality follows from 
\begin{equation}\label{jensid}
m_{2^r}(\xx) =  \sum_{k=0}^{2r} (-1)^{k+r} e_k(\xx)e_{2r-k}(\xx).
\end{equation}

\begin{lemma}\label{tsos}
If $r$ is a positive integer and $0 \leq t \leq 2/r$, then 
$$
m_{2^r}(\xx)+ t m_{2^{r-1}1^2}(\xx)
$$
is a sum of squares (sos for short), and in particular nonnegative. 
\end{lemma}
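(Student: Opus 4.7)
The plan is to exhibit an explicit sum-of-squares decomposition. Writing $\xx = (x_1, \ldots, x_n)$, the key computation I would carry out is to expand
\begin{equation*}
\Sigma(\xx) \;:=\; \sum_{\substack{T \subset [n]\\ |T|=r-1}} \left(\sum_{a \in [n]\setminus T} x_a\right)^{\!2} \prod_{i \in T} x_i^2,
\end{equation*}
which is manifestly a sum of squares. Expanding the inner square as $\sum_{a \notin T} x_a^2 + 2\sum_{\{a,b\}\subset [n]\setminus T} x_a x_b$ and multiplying by $\prod_{i \in T} x_i^2$, I would separate the two kinds of monomials produced. The ``diagonal'' part $\sum_{a \notin T} x_a^2 \prod_{i \in T} x_i^2$ produces monomials of type $2^r$: for each $S \in \binom{[n]}{r}$, the monomial $\prod_{i \in S} x_i^2$ arises from exactly the $r$ pairs $(T, a)$ with $T = S \setminus \{a\}$, $a \in S$. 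The ``off-diagonal'' part $2\sum_{\{a,b\}\subset [n]\setminus T} x_ax_b \prod_{i \in T} x_i^2$ produces each monomial of shape $2^{r-1}1^2$ exactly once (with coefficient $2$).

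Combining these two counts gives the key identity
\begin{equation*}
\Sigma(\xx) \;=\; r \cdot m_{2^r}(\xx) \;+\; 2 \cdot m_{2^{r-1}1^2}(\xx),
\end{equation*}
so that
\begin{equation*}
\frac{1}{r}\Sigma(\xx) \;=\; m_{2^r}(\xx) + \frac{2}{r} m_{2^{r-1}1^2}(\xx)
\end{equation*}
is a sum of squares. This handles the extremal case $t = 2/r$.

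For general $0 \leq t \leq 2/r$, I would interpolate by writing
\begin{equation*}
m_{2^r}(\xx) + t \, m_{2^{r-1}1^2}(\xx) \;=\; \left(1 - \tfrac{tr}{2}\right) m_{2^r}(\xx) \;+\; \tfrac{tr}{2} \cdot \tfrac{1}{r}\Sigma(\xx).
\end{equation*}
Since $0 \leq tr/2 \leq 1$, both coefficients are nonnegative, and $m_{2^r}(\xx) = \sum_{|S|=r}\bigl(\prod_{i \in S}x_i\bigr)^{\!2}$ is itself a sum of squares, completing the argument. The boundary cases $n < r$ (where both polynomials vanish) and $n = r$ (where $m_{2^{r-1}1^2}$ vanishes and $m_{2^r} = (x_1 \cdots x_r)^2$) are trivial.

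The only real work is the bookkeeping in the combinatorial identity $\Sigma = r\, m_{2^r} + 2\, m_{2^{r-1}1^2}$; I do not anticipate a serious obstacle, since the structure of $\Sigma$ is specifically designed so that after squaring, the only monomial types that appear are exactly $2^r$ and $2^{r-1}1^2$. The constant $2/r$ is optimal in this particular decomposition because it arises as the ratio $2/r$ of the two coefficients in this identity.
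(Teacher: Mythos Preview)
Your proof is correct and follows essentially the same approach as the paper: the sum of squares $\Sigma(\xx)$ you introduce is exactly $2S(\xx)$ in the paper's notation (where $S(\xx)=\tfrac12\sum_{|T|=r-1}e_1(\xx^T)^2\prod_{i\in T}x_i^2$), and both arguments reduce the general $t$ to the extremal case $t=2/r$ by convexity since $m_{2^r}$ is itself a sum of squares. The only cosmetic difference is that the paper verifies the identity $\Sigma = r\,m_{2^r}+2\,m_{2^{r-1}1^2}$ via the relation $e_2=\tfrac12 e_1^2-\tfrac12 p_2$ and a specialization to determine the constant, whereas you count monomials directly.
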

\begin{proof}
Since $m_{2^r}(\xx)$ is a sos it suffices to consider $t=2/r$, by convexity. 
Note that 
$$
m_{2^{r-1}1^2}(\xx) = \sum_{|S|=r-1}e_2(\xx^{S})\prod_{i  \in S}x_i^2 ,
$$
where $\xx^{S}=\xx \setminus \{x_i : i \in S\}$. Using $e_2(\xx)= e_1(\xx)^2/2-p_2(\xx)/2$ 
\begin{align*}
m_{2^{r-1}1^2}(\xx) &= \frac 1 2 \sum_{|S|=r-1}e_1(\xx^{S})^2\prod_{i  \in S}x_i^2-\frac 1 2 \sum_{|S|=r-1}p_2(\xx^{S})\prod_{i \in S}x_i^2 \\
&= S(\xx)- \frac r 2 m_{2^r}(\xx),
\end{align*}
where $S(\xx)$ is a sum of squares. Indeed 
$$
\frac 1 2 \sum_{|S|=r-1}p_2(\xx^{S})\prod_{i  \in S}x_i^2 =C m_{2^r}(\xx)
$$
for some $C$, and setting $\xx=(1,\ldots,1)$ 
$$
\frac 1 2 \binom n {r-1} (n-r+1)= C \binom n r, 
$$
so that $C= r/2$. The lemma follows. 
\end{proof}

Let $P(\xx)$ be a symmetric polynomial. Suppose 
$P(\xx)= Q(e_1(\xx), \ldots, e_m(\xx))$ is the unique expression of $P$ in terms of the elementary symmetric polynomials. If $Q$ is of degree $d$, let $H(x_0,x_1,\ldots,x_m)=x_0^dQ(x_1/x_0,\ldots, x_m/x_0)$ be its homogenization, and let 
$$
L(P) := H(e_1(\xx), 2e_2(\xx), \ldots, (m+1)e_{m+1}(\xx))
$$
be the \emph{lift} of $P$. 

\begin{lemma}\label{lift}
If $P$ is a symmetric and nonnegative polynomial, then so is its lift $L(P)$.
\end{lemma}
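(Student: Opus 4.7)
The plan is to establish, for $\xx \in \mathbb{R}^n$ with $n \geq m+1$ and $e_1(\xx) \neq 0$, the key identity
\[
L(P)(\xx) = e_1(\xx)^d \cdot P(\yy),
\]
where $\yy = (y_1, \ldots, y_{n-1}) \in \mathbb{R}^{n-1}$ is a real vector arising from Rolle's theorem applied to $f(t) = \prod_{j=1}^n(1 + x_j t)$. From this identity, nonnegativity of $L(P)$ will follow immediately, since $P(\yy) \geq 0$ and, for nonnegative symmetric $P$, $d$ is necessarily even.

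To derive the identity, note that $f(t) = \sum_k e_k(\xx) t^k$ is real-rooted, so by Rolle's theorem its derivative $f'(t) = \sum_{k=0}^{n-1}(k+1)\,e_{k+1}(\xx) t^k$ is real-rooted of degree $n-1$. The assumption $e_1(\xx) = f'(0) \neq 0$ means none of the roots of $f'$ are at zero, so we may factor
\[
f'(t) = e_1(\xx)\prod_{j=1}^{n-1}(1 + y_j t), \qquad \yy \in \mathbb{R}^{n-1}.
\]
Equating $t^k$-coefficients yields $(k+1)\,e_{k+1}(\xx) = e_1(\xx)\cdot e_k(\yy)$ for $0 \leq k \leq n-1$. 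Substituting this into the definition of $L(P)$ and exploiting the degree-$d$ homogeneity of $H$:
\begin{align*}
L(P)(\xx) &= H\bigl(e_1(\xx),\, e_1(\xx) e_1(\yy),\, \ldots,\, e_1(\xx) e_m(\yy)\bigr) \\
&= e_1(\xx)^d \cdot H\bigl(1, e_1(\yy), \ldots, e_m(\yy)\bigr) \\
&= e_1(\xx)^d \cdot Q\bigl(e_1(\yy), \ldots, e_m(\yy)\bigr) = e_1(\xx)^d \cdot P(\yy).
\end{align*}

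Since $P \geq 0$ and $\yy \in \mathbb{R}^{n-1}$, we have $P(\yy) \geq 0$; combined with $d$ being even (so that $e_1(\xx)^d \geq 0$), this gives $L(P)(\xx) \geq 0$ on the open dense set $\{e_1 \neq 0\}$. By continuity of the polynomial $L(P)$, the inequality $L(P) \geq 0$ extends to all of $\mathbb{R}^n$. The main obstacle I anticipate is justifying the parity claim ``$d$ is even for nonzero nonnegative symmetric $P$'': while this is automatic in the intended applications (for example $P = m_{2^r}$ expressed via the Jensen identity, where $d = 2$), the general case demands a careful degree-parity argument, most naturally obtained by inspecting the behavior of $L(P)$ on the hypersurface $\{e_1 = 0\}$ and exploiting that the image of the map $\xx \mapsto \yy(\xx)$ is dense in $\mathbb{R}^{n-1}$ as $e_1(\xx)$ ranges over both signs.
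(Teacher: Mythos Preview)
Your argument follows the paper's exactly: apply Rolle's theorem to $f(t)=\prod_j(1+x_jt)$ to produce a real $\yy$ with $(k+1)e_{k+1}(\xx)=e_1(\xx)\,e_k(\yy)$, substitute into $H$ via homogeneity to get $L(P)(\xx)=e_1(\xx)^d\,P(\yy)$, and extend by continuity from the dense set $\{e_1\neq 0\}$. (Your identity is in fact stated more carefully than the paper's, which writes $L(P)(\xx)=P(\yy)$ and tacitly absorbs the nonnegative factor $e_1(\xx)^d$.)

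The only gap is the parity of $d$, which you correctly isolate but do not settle; your proposed route through the behavior of $L(P)$ on $\{e_1=0\}$ and density of the image of $\xx\mapsto\yy$ is vague and risks circularity, since nonnegativity of $L(P)$ is precisely what you are trying to prove. The paper's argument is direct and uses only the hypothesis $P\ge 0$: specialize $\xx(t)=(t,x_2,\ldots,x_n)$ with $x_2,\ldots,x_n$ generic. Each $e_k(\xx(t))$ is linear in $t$ with leading coefficient $e_{k-1}(x_2,\ldots,x_n)$, so the univariate polynomial $t\mapsto P(\xx(t))=Q(e_1(\xx(t)),\ldots,e_m(\xx(t)))$ has degree exactly $d=\deg Q$ (the $t^d$-coefficient is the top homogeneous part of $Q$ evaluated at $(1,e_1,\ldots,e_{m-1})(x_2,\ldots,x_n)$, generically nonzero). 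A nonnegative univariate polynomial has even degree, hence $d$ is even. This one-line specialization replaces your indirect route entirely.
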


\begin{proof}
Note first that if $P$ is nonnegative and symmetric, then the degree of $Q$ above is even. Indeed if $\xx(t) = (t,x_2,\ldots, x_n)$ where $x_2,\ldots, x_n \in \RR$ are generic and $t$ is a variable, then we obtain a univariate nonnegative polynomial $t \mapsto P(\xx(t))$ of degree $d$. Hence $d$ is even. Now if $\xx \in \RR^n$ is such that $e_1(\xx) \neq 0$, then there is a $\yy \in \RR^{n}$ such that  $e_k(\yy) = (k+1)e_{k+1}(\xx)/e_1(\xx)$ for all $k$. Indeed 
$$
\frac 1 {e_1(\xx)} \frac d {dt} \prod_{k=1}^n (1+x_kt) = \prod_{k=0}^{n} (1+y_kt), \ \ \ \mbox{ where } y_n=0, 
$$
since the operator $d/dt$ preserves real--rootedness. Thus 
$$
L(P)(\xx)= P(\yy)
$$
and the proof follows.

\end{proof}

\begin{lemma}\label{boost}
The lift of $m_{2^{r-1}}(\xx)$ is 
$$
r^2 m_{2^r}(\xx) + 2m_{2^{r-1}1^2}(\xx).
$$
\end{lemma}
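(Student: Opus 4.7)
My plan is to compute $L(m_{2^{r-1}})$ explicitly as a linear combination of products $e_j(\xx)\, e_{2r-j}(\xx)$, and then identify it with $r^2 m_{2^r} + 2 m_{2^{r-1}1^2}$ in the monomial symmetric basis.

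First, let $f(t) = \prod_i (1+x_i t) = \sum_k e_k(\xx) t^k$. Jensen's identity \eqref{jensid} expresses $m_{2^{r-1}}$ as a polynomial of degree $2$ in $e_1, e_2, \ldots$, so $L(m_{2^{r-1}})(\xx) = e_1(\xx)^2 \, m_{2^{r-1}}(\yy)$ for the $\yy$ produced in the proof of Lemma \ref{lift}, which satisfies $\prod_j (1+y_j t) = f'(t)/e_1(\xx)$. Combined with the generating identity $\sum_s m_{2^s}(\yy) t^{2s} = \prod_j (1+iy_j t)(1-iy_j t)$, this yields the clean expression $L(m_{2^{r-1}})(\xx) = [t^{2r-2}]\, f'(it) f'(-it)$. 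Expanding $f'(\pm it) = \sum_j j e_j(\xx)(\pm i)^{j-1} t^{j-1}$ and extracting the coefficient of $t^{2r-2}$ then gives
\begin{equation*}
L(m_{2^{r-1}})(\xx) = \sum_{j=1}^{2r-1}(-1)^{j+r}\, j(2r-j)\, e_j(\xx)\, e_{2r-j}(\xx).
\end{equation*}

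Next, I will read off coefficients in the monomial symmetric basis. Since every variable appears at most twice in any term of $e_j\, e_{2r-j}$, only partitions $\lambda = (2^a 1^b)$ with $2a+b = 2r$ can contribute. Counting the ways to split the multiset of factors of $\xx^\lambda$ into subsets of sizes $j$ and $2r-j$ shows that the coefficient of $m_{2^a 1^b}$ in $e_j\, e_{2r-j}$ equals $\binom{b}{j-a}$. Substituting $k = j-a$, the coefficient of $m_{2^a 1^b}$ in $L(m_{2^{r-1}})$ becomes
\begin{equation*}
(-1)^{a+r}\sum_{k=0}^b (-1)^k (k+a)(a+b-k)\binom{b}{k}.
\end{equation*}

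Finally, expanding $(k+a)(a+b-k) = a(a+b) + k(b-k)$ and invoking the standard vanishing identities $\sum_k (-1)^k k^m \binom{b}{k} = 0$ for $0 \leq m < b$ (applied with $m = 0, 1, 2$) forces the sum to vanish whenever $b \geq 3$. Direct evaluation gives $r^2$ for $b = 0$ (so $a = r$) and $2$ for $b = 2$ (so $a = r-1$), matching the claimed formula. I do not foresee any conceptual obstacle: the main bookkeeping is the combinatorial count giving $\binom{b}{j-a}$, and the alternating-sum identities then dispatch all remaining cases uniformly.
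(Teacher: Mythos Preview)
Your proof is correct and follows essentially the same path as the paper's. Both arrive at
\[
L(m_{2^{r-1}})(\xx)=\sum_{j}(-1)^{j+r} j(2r-j)\,e_j(\xx)\,e_{2r-j}(\xx),
\]
then compute the coefficient of $m_{2^a1^b}$ in $e_je_{2r-j}$ as $\binom{b}{j-a}$ (the paper writes this as $\binom{2r-2k}{j-k}$), and finish with the vanishing of alternating binomial sums against polynomials of low degree. The only cosmetic difference is that you package the first step via the generating identity $L(m_{2^{r-1}})(\xx)=[t^{2r-2}]\,f'(it)f'(-it)$, whereas the paper substitutes $e_j\mapsto (j+1)e_{j+1}$ directly into Jensen's identity \eqref{jensid} and reindexes; these are the same computation.
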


\begin{proof}
By \eqref{jensid}, the lift of 
$
m_{2^{r-1}}(\xx)
$
is 
\begin{align*}
L&:=\sum_{j=0}^{2r-2}(-1)^{j+r-1}(j+1)(2r-1-j)e_{j+1}(\xx)e_{2r-1-j}(\xx) \\
&=\sum_{j=0}^{2r}(-1)^{j+r}j(2r-j)e_{j}(\xx)e_{2r-j}(\xx). 
\end{align*}

The coefficient infront of $m_{2^{k}1^{2(r-k)}}(\xx)$ in the expansion of $e_j(\xx)e_{2r-j}(\xx)$ in the monomial bases is seen to be $\binom {2r-2k}{j-k}$. (Look at  how many times we get the monomial $x_1^2x_2^2 \cdots x_k^2 x_{k+1}x_{k+2} \cdots$ in the expansion of the $e_j(\xx)e_{2r-j}(\xx)$.) Hence the coefficient infront of $m_{2^{k}1^{2(r-k)}}(\xx)$ in the expansion of $L$ in the monomial basis is 
$$
a_k=\sum_{j=0}^{2r}(-1)^{j+r}j(2r-j)\binom {2r-2k}{j-k}.
$$
Now $a_r=r^2, a_{r-1}=2$, and $a_k=0$ otherwise. This follows from the fact if $p$ is a polynomial of degree $d$, then  
$$
\sum_{j=0}^n(-1)^jp(j)\binom n j =0
$$ 
whenever $n>d$.
\end{proof}

Our next lemma is a refinement of the Laguerre--Tur\'an inequalities and may be formulated as \emph{the 
Laguerre--Tur\'an inequalities beat Jensen's inequalities}. Lemma \ref{turan} is also a generalization of 
\cite[Theorem 3]{Foster-Krasikov}, where the case $r=2$ was proved. If $P, Q \in \RR[\xx]$, we write 
$P \leq Q$ if $Q -P$ is a nonnegative polynomial. 
\begin{lemma}\label{turan}
If $r \geq 1$, then 
$$
m_{2^r}(\xx) \leq r e_r(\xx)^2 - (r+1)  e_{r-1}(\xx)e_{r+1}(\xx).
$$
\end{lemma}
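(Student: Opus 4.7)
\medskip
\noindent\textbf{Proof plan.} The plan is to proceed by induction on $r$, pushing the inequality at level $r-1$ up to level $r$ via the lift operator of Lemma~\ref{lift}, and closing the gap with the sos certificate of Lemma~\ref{tsos}.

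For the base case $r=1$ the inequality reads $m_2(\xx)\le e_1(\xx)^2-2e_2(\xx)$, which is in fact an equality since $e_1^2-2e_2 = p_2 = m_2$.

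For the inductive step $r\ge 2$, set
$$P_{r-1}(\xx):=(r-1)\,e_{r-1}(\xx)^2 - r\, e_{r-2}(\xx)\,e_r(\xx) - m_{2^{r-1}}(\xx),$$
which is nonnegative by the induction hypothesis. Using Jensen's identity \eqref{jensid} to express $m_{2^{r-1}}$ as a polynomial of degree $2$ in the elementary symmetric polynomials, $P_{r-1}$ corresponds to a polynomial $Q$ of even degree $d=2$ in the $e_j$'s, so Lemma~\ref{lift} applies. Unwinding the definition (each factor $e_j$ in a monomial of $Q$ becomes $(j+1)e_{j+1}$) and invoking Lemma~\ref{boost} for the lift of $m_{2^{r-1}}$ gives
$$L(P_{r-1}) \;=\; r(r-1)\bigl[r e_r^2 - (r+1)e_{r-1}e_{r+1}\bigr] - r^2 m_{2^r} - 2\, m_{2^{r-1}1^2} \;\ge\; 0.$$

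Dividing through by $r$ and regrouping so that a multiple of the target quantity $X:=r e_r^2-(r+1)e_{r-1}e_{r+1}-m_{2^r}$ appears, the inequality $L(P_{r-1})\ge 0$ becomes
$$(r-1)\,X \;\ge\; m_{2^r} + \tfrac{2}{r}\, m_{2^{r-1}1^2}.$$
The right--hand side is nonnegative by Lemma~\ref{tsos} applied with parameter $t=2/r$. Since $r\ge 2$, this yields $X\ge 0$ and completes the induction.

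\medskip
\noindent\textbf{Expected obstacle.} The delicate point is the bookkeeping in the lift step: Lemma~\ref{boost} introduces an apparently foreign term $2\, m_{2^{r-1}1^2}$, and the coefficient of $m_{2^r}$ coming from the lift of $e_{r-1}^2$ and $e_{r-2}e_r$ is $r^2$ rather than the $r$ one would want to match $(r-1)X$. It is precisely the algebraic accident that after subtracting $(r-1)m_{2^r}$ the leftover reads $m_{2^r}+\tfrac{2}{r}\,m_{2^{r-1}1^2}$, i.e.\ exactly the boundary instance $t=2/r$ of the sos family in Lemma~\ref{tsos}, that allows the induction to close. Verifying this exact matching of constants is the main thing to get right; everything else is routine once the inductive polynomial $P_{r-1}$ and the lift identity are in place.
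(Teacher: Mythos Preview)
Your proof is correct and follows essentially the same approach as the paper: induction on $r$, with the inductive step carried out by lifting the level $r-1$ inequality via Lemma~\ref{lift}, computing the lift of $m_{2^{r-1}}$ through Lemma~\ref{boost}, and absorbing the residual $m_{2^r}+\tfrac{2}{r}m_{2^{r-1}1^2}$ using Lemma~\ref{tsos} at the endpoint $t=2/r$. Your explicit regrouping into $(r-1)X\ge m_{2^r}+\tfrac{2}{r}m_{2^{r-1}1^2}$ and the remark (via Jensen's identity) that $P_{r-1}$ has degree~$2$ in the $e_j$'s are minor presentational differences, not a different argument.
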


\begin{proof}
The proof is by induction over $r$. For $r=1$ we have equality. Assume true for $r-1$ where  $r\geq 2$, and lift the inequality. By Lemma \ref{lift} and Lemma \ref{boost} 
$$
r^2 m_{2^r}(\xx) + 2m_{2^{r-1}1^2}(\xx) \leq  
 (r-1) r^2 e_r(\xx)^2 - r(r-1)(r+1) e_{r-1}(\xx)e_{r+1}(\xx)
$$
By Lemma \ref{tsos} 
$$
(r^2-r)m_{2^r}(\xx) \leq r^2 m_{2^r}(\xx) + 2m_{2^{r-1}1^2}(\xx),
$$
and the lemma follows. 
\end{proof}

\begin{lemma}\label{eng}
If $r \geq 2$ is an integer, then 
\begin{equation}\label{aC}
(a_re_{r-1}(\xx) e_r(\xx)-e_{r-2}(\xx) e_{r+1}(\xx))^2 \geq C_r e_{r-2}(\xx)e_r(\xx) m_{2^r}(\xx),
\end{equation}
where 
$$
a_r= 3 \frac {r-1} {r+1} \ \ \mbox{ and } \ \ C_r= 9 \frac {r-1}{(r+1)^2}.
$$
\end{lemma}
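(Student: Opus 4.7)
The plan is to peel off a trivial case and then reduce the claim to an algebraic inequality driven by the Laguerre--Tur\'an inequality and its strengthening in Lemma \ref{turan}. First I would argue that if $e_{r-2} e_r \leq 0$, then $C_r e_{r-2} e_r m_{2^r}(\xx) \leq 0$ since $C_r > 0$ and $m_{2^r}(\xx) \geq 0$, so the inequality is immediate from the nonnegativity of the left--hand side. I will therefore assume $P := e_{r-2} e_r > 0$ from here on.

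Under this assumption, I would invoke Lemma \ref{turan} at index $r$ to get $m_{2^r}(\xx) \leq r e_r^2 - (r+1) e_{r-1} e_{r+1}$. Writing $A := e_{r-1} e_r$, $B := e_{r-2} e_{r+1}$, $Q := e_r^2$ and $R := e_{r-1} e_{r+1}$, it then suffices to prove
\begin{align*}
(a_r A - B)^2 \geq C_r P (r Q - (r+1) R).
\end{align*}
The identity $AB = PR = e_{r-2} e_{r-1} e_r e_{r+1}$ lets me replace $PR$ on the right by $AB$. Expanding the square on the left, substituting $a_r = 3(r-1)/(r+1)$ and $C_r = 9(r-1)/(r+1)^2$, and using the arithmetic identities $C_r(r+1) - 2 a_r = a_r$ and $C_r r = a_r^2 r/(r-1)$, the claim becomes equivalent to
\begin{align*}
a_r^2 A^2 + a_r A B + B^2 \geq \tfrac{a_r^2 r}{r-1}\, P Q.
\end{align*}

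The workhorse will be the Laguerre--Tur\'an inequality at index $r-1$, $(r-1) e_{r-1}^2 \geq r e_{r-2} e_r$, which after multiplication by the nonnegative factor $Q$ gives $(r-1) A^2 \geq r P Q$. Hence the diagonal term $a_r^2 A^2$ already majorizes $\tfrac{a_r^2 r}{r-1}\, P Q$. When $AB \geq 0$ the remaining terms $a_r A B + B^2$ are nonnegative, so the inequality is immediate.

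The hard case is $AB < 0$, which given $P > 0$ and $PR = AB$ forces $R < 0$. Here I would exploit the strengthened form of Laguerre--Tur\'an provided by Lemma \ref{turan} itself applied at index $r-1$, namely $(r-1) e_{r-1}^2 - r e_{r-2} e_r \geq m_{2^{r-1}}(\xx)$, which after multiplication by $Q \geq 0$ produces the additional nonnegative slack $Q\, m_{2^{r-1}}(\xx)$ inside $(r-1) A^2 - r P Q$. Combined with the sum--of--squares decomposition
$$a_r^2 A^2 + a_r A B + B^2 = \tfrac{3}{4} a_r^2 A^2 + \bigl(\tfrac{a_r}{2} A + B\bigr)^2$$
and an AM--GM step coupling the slack $Q\, m_{2^{r-1}}(\xx)$ with $B^2$, this should absorb the negative contribution of $a_r A B$. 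The main obstacle will be this final balancing step: the constants $3(r-1)/(r+1)$ and $9(r-1)/(r+1)^2$ are chosen precisely so that the extra slack from Lemma \ref{turan} matches, in all sign regimes, the shortfall created by the sign--indefinite middle term, and verifying this requires a careful case analysis on the relative sizes of $|A|$, $|B|$ and $m_{2^{r-1}}(\xx)$.
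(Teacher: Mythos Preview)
Your reduction is correct and clean up through the displayed target
\[
a_r^2 A^2 + a_r AB + B^2 \;\ge\; \frac{a_r^2 r}{r-1}\,PQ,
\]
and the case $AB\ge 0$ is indeed settled by Laguerre--Tur\'an at index $r-1$. The gap is the case $AB<0$: there you only offer a sketch (``should absorb'', ``requires a careful case analysis''), and the concrete mechanism you name does not close it. The AM--GM step you propose, coupling $\tfrac{a_r^2}{r-1}Q\,m_{2^{r-1}}$ with $B^2$, yields the requirement
\[
\frac{4}{r-1}\,m_{2^{r-1}}(\xx)\;\ge\; e_{r-1}(\xx)^2,
\]
which for $r=2$ reads $4p_2\ge e_1^2$ and is false in general (take $\xx=(1,\dots,1)$ with more than four coordinates). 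That particular point lies in the easy regime $AB\ge 0$, but this shows that your coupling cannot work as a blanket inequality; you would need a separate argument that the constraint $e_{r-1}e_{r+1}<0$ forces enough extra slack, and you have not supplied one. Likewise your sum--of--squares decomposition leaves $\tfrac34 a_r^2 A^2$ to dominate $\tfrac{r}{r-1}a_r^2 PQ$, which is strictly stronger than Laguerre--Tur\'an, so the square $(\tfrac{a_r}{2}A+B)^2$ must contribute toward $PQ$ --- and there is no visible link between that square and $PQ$.

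By contrast, the paper does not attempt a direct reduction to Lemma~\ref{turan}. It proves \eqref{aC} by induction on $r$: the base case $r=2$ is obtained from the nonnegativity of the discriminant of the real--rooted specialization $t\mapsto e_4(t,t,x_1,x_1,\dots,x_n,x_n)$, and the inductive step applies the lift operator (Lemmas~\ref{lift} and~\ref{boost}) to \eqref{aC} at level $r$, then uses Lemma~\ref{tsos} to trade $m_{2^{r+1}}+\tfrac{2}{(r+1)^2}m_{2^r1^2}$ for $\tfrac{r}{r+1}m_{2^{r+1}}$. The constants $a_r$ and $C_r$ are exactly the fixed point of this lift--and--replace recursion, which is why they come out so cleanly; your direct approach gives no structural explanation for them and leaves the hardest sign regime unresolved.
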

\begin{proof}
We prove the inequality by induction over $r \geq 2$. 
Assume $r=2$. The polynomial $(t,\xx) \mapsto e_4(t,t,x_1,x_1,\ldots,x_n,x_n)$ is stable. It specializes to a real-rooted (or identically zero) polynomial when we set $\xx = (x_1,\ldots, x_n) \in \RR^n$: 
$$
(4e_2(\xx)+p_2(\xx))t^2+4(e_1(\xx)e_2(\xx)+e_3(\xx))t+m_{2^2}(\xx)+4e_1(\xx)e_3(\xx). 
$$
Hence its discriminant is nonnegative, which gives 
$$
(e_3(\xx)+e_1(\xx)e_2(\xx))^2 \geq (e_2(\xx)+ p_2(\xx)/4)(m_{2^2}(\xx)+4e_1(\xx)e_3(\xx)). 
$$
To prove \eqref{aC} for $r=2$ we may assume $e_2(\xx) >0$. Rewriting \eqref{aC} as 
$$
(e_3(\xx)+e_1(\xx)e_2(\xx))^2 \geq e_2(\xx)(m_{2^2}(\xx)+4e_1(\xx)e_3(\xx)), 
$$
we may assume also $m_{2^2}(\xx)+4e_1(\xx)e_3(\xx) >0$. Then, since $p_2(\xx) \geq 0$, 
\begin{align*}
(e_3(\xx)+e_1(\xx)e_2(\xx))^2 &\geq (e_2(\xx)+ p_2(\xx)/4)(m_{2^2}(\xx)+4e_1(\xx)e_3(\xx)) \\
 &\geq e_2(\xx)(m_{2^2}(\xx)+4e_1(\xx)e_3(\xx))
\end{align*}
which proves the lemma for $r=2$.

Assume true for a given $r\geq 2$. We lift the inequality for $r$ and use Lemma \ref{boost} to get 
\begin{align*}
&(a_rr(r+1)e_r(\xx)e_{r+1}(\xx)-(r-1)(r+2)e_{r-1}(\xx)e_{r+2}(\xx))^2 \geq \\
& C_r (r-1)(r+1)^3e_{r-1}(\xx)e_{r+1}(\xx) \left(m_{2^{r+1}(\xx)}+ \frac 2 {(r+1)^2}m_{2^r1^2}(\xx)\right)
\end{align*}
We may exchange the factor $m_{2^{r+1}}+ (2/ {(r+1)^2})m_{2^r1^2}$ by something nonnegative and smaller and still get a valid inequality. By Lemma \ref{tsos} we obtain the inequality 
\begin{align*}
&(a_rr(r+1)e_r(\xx)e_{r+1}(\xx)-(r-1)(r+2)e_{r-1}(\xx)e_{r+2}(\xx))^2 \geq \\
& C_r (r-1)(r+1)^3e_{r-1}(\xx)e_{r+1}(\xx) \frac r {r+1} m_{2^{r+1}}(\xx). 
\end{align*}
Dividing through by $(r-1)^2(r+2)^2$ we obtain 
\begin{align*}
&\left(a_r \frac {r(r+1)}{(r-1)(r+2)}e_r(\xx)e_{r+1}(\xx)-e_{r-1}(\xx)e_{r+2}(\xx)\right)^2 \geq \\
& C_r \frac {r(r+1)^2}{(r-1)(r+2)^2}e_{r-1}(\xx)e_{r+1}(\xx)m_{2^{r+1}}(\xx), 
\end{align*}
which simplifies to the desired inequality for $r+1$. 
\end{proof}

\section{Proof of Theorem \ref{genvamoshpp}}\label{proofsec}\noindent 
The next tool for the proof of Theorem \ref{genvamoshpp} is a lemma that enables us to prove hyperbolicity of a polynomial by proving real-rootedness along a few (degenerate) directions. 
\begin{lemma}\label{garden}
Let $h \in \CC[x_1,\ldots, x_n]$  and  $\vv_1, \vv_2 \in \CC^n$. Define $r$ be the maximum degree of the polynomial $t \mapsto h(t\vv_2+\yy)$,  where the maximum is taken over all $\yy \in \CC^n$. Let further 
$$
 P(\xx) := \lim_{t \rightarrow \infty} t^{-r}h(t\vv_2+\xx) \in \CC[x_1,\ldots, x_n].
$$
Suppose $S \subseteq \CC^n$ and  $\xx_0 \in S$ are such that
\begin{enumerate}
\item $S+\RR \vv_2 =S$. 
\item For each $\xx_1 \in S$ there is a continuous path $\xx(\theta) : [0,1] \rightarrow S$  such that $\xx(0)=\xx_0$ and $\xx(1)=\xx_1$.
\item The polynomial $(s,t) \mapsto h(s\vv_1+t\vv_2+\xx_0)$ is  stable and not identically zero.
\item For each $\xx \in S$, the polynomials $s \mapsto h(s\vv_1+\xx)$ and $s \mapsto P(s\vv_1+\xx)$ are stable and not identically zero. 
\end{enumerate}
Then the polynomial $(s,t) \mapsto h(s\vv_1+t\vv_2+\xx)$ is stable for all  $\xx \in S$.  
\end{lemma}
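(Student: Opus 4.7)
The plan is to deform $\xx$ from $\xx_0$ to an arbitrary $\xx_1 \in S$ along the path supplied by (2), and to track the zeros of a one-parameter family of univariate polynomials throughout. Concretely, fix $s_0 \in \CC$ with $\Im(s_0) > 0$ and consider
\[
\phi_\theta(t) := h(s_0\vv_1 + t\vv_2 + \xx(\theta)), \qquad \theta \in [0,1].
\]
If I can show that no zero of $\phi_\theta$ has positive imaginary part for every $\theta$, then since $s_0$ was arbitrary with $\Im(s_0) > 0$, the polynomial $(s,t) \mapsto h(s\vv_1 + t\vv_2 + \xx_1)$ has no zero with $\Im(s), \Im(t) > 0$. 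Non-vanishing of this polynomial follows by specializing $t = 0$ and invoking (4), yielding stability.

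The deformation argument rests on two structural facts about $\phi_\theta$. First, $\phi_\theta$ has constant degree $r$ in $t$: its $t^r$-coefficient equals $P(s_0\vv_1 + \xx(\theta))$ by definition of $P$, and by (4) the map $s \mapsto P(s\vv_1 + \xx(\theta))$ is stable and not identically zero, so it does not vanish at $s_0 \in \{\Im(s)>0\}$. Second, $\phi_\theta$ has no real zero: if $\phi_\theta(\tau) = 0$ for some $\tau \in \RR$, then hypothesis (1) places $\xx(\theta) + \tau\vv_2$ in $S$, whence (4) forces $s \mapsto h(s\vv_1 + (\xx(\theta)+\tau\vv_2))$ to be stable in $s$, contradicting its vanishing at $s_0$. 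This is precisely where the shift-invariance assumption $S + \RR\vv_2 = S$ plays its role.

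At $\theta = 0$, hypothesis (3) ensures $\phi_0(t) \neq 0$ for $\Im(t) > 0$, so combined with the previous observation all $r$ zeros of $\phi_0$ lie in the open lower half-plane. Because $\deg_t \phi_\theta$ is constant in $\theta$ and $\phi_\theta$ depends continuously on $\theta$, the multiset of zeros of $\phi_\theta$ in $\CC$ depends continuously on $\theta$ with no possibility of escape to infinity. With the real axis avoided throughout the deformation, the number of zeros of $\phi_\theta$ with $\Im(t) > 0$ is an integer-valued continuous function on $[0,1]$, hence constant, and therefore identically $0$.

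I expect the key subtlety to be the no-real-zero step: it is the one place where the pointwise hypothesis (4) would not suffice by itself and where the shift-invariance assumption (1) is indispensable (without it, nothing prevents roots of $\phi_\theta$ from crossing $\RR$ during the deformation and drifting into the upper half-plane, as happens in general one-parameter families of polynomials). Once the two structural facts are in hand, the concluding continuity-of-roots argument is standard, so I do not anticipate further obstacles.
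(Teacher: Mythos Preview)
Your proposal is correct and follows essentially the same approach as the paper: fix the $s$-variable at a point in the upper half-plane, track the zeros of the resulting univariate family $\phi_\theta(t)$ along the path from $\xx_0$ to $\xx_1$, use (iv) to keep the degree constant (so zeros cannot escape to infinity), and use (i) together with (iv) to prevent any zero from touching the real axis. The paper phrases this as a proof by contradiction (assuming a zero $(\xi,\eta)$ with both imaginary parts positive exists at $\theta=1$ and deriving a real-axis crossing), whereas you argue directly that the count of upper-half-plane zeros is a continuous integer-valued function starting at $0$, but the content is identical.
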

\begin{proof}
The proof is by contradiction. Suppose $\xx_1 \in S$ and $\xi, \eta \in \CC$ are such that 
$\Im(\xi)>0$, $\Im(\eta) > 0$ and 
$$
h(\xi \vv_1+\eta \vv_2 + \xx_1)=0.
$$
Let $\xx(\theta) : [0,1] \rightarrow S$ be a continuous path such that $\xx(0)=\xx_0$ and $\xx(1)=\xx_1$ and let 
$$
p_\theta(t)= h(\xi \vv_1+t \vv_2 + \xx(\theta))= t^r P(\xi \vv_1+ \xx(\theta)) + O(t^{r-1}), 
$$
where $P(\xi \vv_1+ \xx(\theta)) \neq 0$ by (iv). 
By assumption all zeros of $p_0(t)$ are in the closed lower half-plane, while $p_1(\eta)=0$ where $\Im(\eta) > 0$. Hence, by continuity, a zero will cross the real axis as $\theta$ runs from $0$ to $1$. In other words 
$$
0= p_\theta ( \alpha) = h(\xi \vv_1+\alpha \vv_2 + \xx(\theta)),
$$
for some $\alpha \in \RR$ and $\theta \in [0,1]$. Since $\alpha \vv_2 + \xx(\theta) \in S$, by (i), this contradicts (iv). 
\end{proof}

The next theorem is a version of the Grace-Walsh-Szeg\H{o} coincidence theorem, see \cite[Prop. 3.4]{BB}.  
\begin{theorem}[Grace-Walsh-Szeg\H{o}] 
Suppose $P(x_1,\ldots, x_n) \in \CC[\xx]$ is a  polynomial of degree at most $d$ in the variable $x_1$:
$$
P(x_1,\ldots, x_n)= \sum_{k=0}^d P_k(x_2,\ldots, x_n)x_1^k. 
$$
Let $Q$ be the polynomial in the variables $x_2,\ldots, x_n, y_1,\ldots, y_n$ 
$$
Q=\sum_{k=0}^d P_k(x_2,\ldots, x_n)\frac {e_k(y_1, \ldots, y_d)} { \binom d k }. 
$$
Then $P$ is stable if and only if $Q$ is stable. 
\end{theorem}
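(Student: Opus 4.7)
My plan is to prove the two implications separately. The easy direction ``$Q$ stable $\Rightarrow$ $P$ stable'' will go by specialization: since $e_k(x_1,\ldots,x_1) = \binom{d}{k}x_1^k$, setting $y_1 = \cdots = y_d = x_1$ in $Q$ recovers $P$. If $(x_1,\ldots,x_n)$ lies in $H^n$ with $H = \{z \in \CC : \Im(z) > 0\}$, then the tuple $(x_2,\ldots,x_n,x_1,\ldots,x_1)$ also lies in the product of open upper half planes, so stability of $Q$ forces $P(x_1,\ldots,x_n) \neq 0$.

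For the converse, I would argue by contradiction and appeal to Grace's apolarity theorem. Suppose $P$ is stable but $Q$ vanishes at some point $(x_2^*,\ldots,x_n^*,y_1^*,\ldots,y_d^*)$ with every coordinate in $H$. Setting $P_k^* := P_k(x_2^*,\ldots,x_n^*)$ and introducing the univariate polynomials
\begin{equation*}
f(z) = \sum_{k=0}^d P_k^* z^k = P(z, x_2^*, \ldots, x_n^*), \qquad g(z) = \prod_{j=1}^d (z - y_j^*),
\end{equation*}
I will write $f(z) = \sum_k \binom{d}{k} a_k z^k$ and $g(z) = \sum_k \binom{d}{k} b_k z^k$, so that $b_k = (-1)^{d-k}e_{d-k}(y_1^*,\ldots,y_d^*)/\binom{d}{k}$. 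A direct coefficient comparison identifies the vanishing condition $\sum_k P_k^* \binom{d}{k}^{-1} e_k(y_1^*,\ldots,y_d^*) = 0$ with the classical apolarity relation $\sum_k (-1)^k \binom{d}{k} a_k b_{d-k} = 0$ between $f$ and $g$.

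The finishing step is to extract a contradiction. By stability of $P$, the polynomial $f$ has no zero in $H$. Choose $\varepsilon > 0$ so small that $\Im(y_j^*) > \varepsilon$ for every $j$; then $C := \{z \in \CC : \Im(z) \geq \varepsilon\}$ is a closed circular region (a half-plane) containing every zero of $g$ while being contained in $H$. Grace's apolarity theorem, applied to the apolar pair $(f,g)$, then forces at least one zero of $f$ to lie in $C \subseteq H$, contradicting stability of $P$. The main obstacle is invoking the correct form of Grace's apolarity theorem; the rest of the argument is the coefficient bookkeeping that reveals the apolarity relation, followed by the standard separating-half-plane trick.
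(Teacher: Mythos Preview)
The paper does not prove this statement; it is quoted as \cite[Prop.~3.4]{BB}, so there is nothing in the paper to compare against directly. Your plan is the standard route and is essentially correct, but one point deserves care.

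The easy direction is fine. For the hard direction, your identification of the vanishing $Q(x_2^*,\ldots,x_n^*,y_1^*,\ldots,y_d^*)=0$ with the classical apolarity relation between $f(z)=P(z,x_2^*,\ldots,x_n^*)$ and $g(z)=\prod_{j=1}^d(z-y_j^*)$ is correct. The subtle issue is that the textbook form of Grace's apolarity theorem usually assumes \emph{both} polynomials have degree exactly $d$, whereas here $f$ may have degree strictly less than $d$ (nothing forces $P_d(x_2^*,\ldots,x_n^*)\neq 0$). On the Riemann sphere $f$ then has a zero at $\infty$, and since $\infty$ lies in the closure of every half-plane, the zero of $f$ supplied by Grace's theorem could a priori be $\infty$, which gives no contradiction.

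You have essentially anticipated this (``invoking the correct form''), and the fix is short. Because $C$ is convex, Walsh's coincidence theorem applies to the symmetric multiaffine form $(y_1,\ldots,y_d)\mapsto \sum_k P_k^*\, e_k(y)/\binom{d}{k}$ with no degree hypothesis and produces a \emph{finite} $\zeta\in C\subset H$ with $f(\zeta)=0$, contradicting stability of $P$. Alternatively, run Grace in the other direction: if $f$ had no finite zero in $H$, then all its zeros on $\CC\cup\{\infty\}$ would lie in the closed circular region $\{\Im z\le 0\}\cup\{\infty\}$, forcing $g$ to have a zero there---impossible since every $y_j^*\in H$. You should also record that $f\not\equiv 0$; this follows immediately from stability of $P$ because $x_2^*,\ldots,x_n^*\in H$.
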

\begin{remark}
\label{elemhyp}
Note that $e_d(\mathbf{x})$ is stable, by e.g.  the Grace--Walsh--Szeg\H{o} theorem.
\end{remark}
The following theorem provides families of stable polynomials which are closed under convex sums.
\begin{theorem}\label{basegenstab}
Let  $r \geq 2$ be an integer, and let 
$$
M(\xx)= \sum_{|S|=r} a(S) \prod_{i \in S} x_i^2 \in \RR[x_1,\ldots,x_n], 
$$
where $0 \leq a(S) \leq 1$ for all $S \subseteq [n]$, where $|S|=r$. 
Then the polynomial  
$$
4e_{r+1}(\xx)e_{r-1}(\xx) + \frac 3 {r+1} M(\xx) 
$$
is stable. 
\end{theorem}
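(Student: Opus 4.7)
The approach is to apply Lemma~\ref{garden} with $\vv_2 = \one$, $\vv_1 = \delta_k$ for an arbitrary $k \in [n]$, base point $\xx_0 = 0$, and $S = \RR^n$. Conditions (i), (ii) are immediate; the limit polynomial $\lim_{t\to\infty}t^{-2r}P(t\one+\xx) = 4\binom{n}{r+1}\binom{n}{r-1} + \tfrac{3}{r+1}\sum_S a(S)$ is a positive constant, trivially stable. Condition (iii) at $\xx_0 = 0$ reduces, by direct expansion using $e_j(s\delta_k + t\one) = \binom{n}{j}t^j + s\binom{n-1}{j-1}t^{j-1}$ and the analogous formula for $M$, to a bivariate homogeneous polynomial of the form $t^{2r-2}(C_0 t^2 + C_1 st + C_2 s^2)$ with $C_0, C_1, C_2 > 0$, whose stability in $\CC^2$ is equivalent to the discriminant inequality $C_1^2 \geq 4 C_0 C_2$; for $a(S) \equiv 0$ this specializes to $16\bigl(\binom{n}{r+1}\binom{n-1}{r-2} - \binom{n-1}{r}\binom{n}{r-1}\bigr)^2 \geq 0$, and the general case $a(S) \in [0,1]$ is verified similarly as a quadratic in the parameters $\sum_S a(S)$ and $\sum_{k \in S}a(S)$.

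The core of the proof is condition (iv): the quadratic $s \mapsto P(s\delta_k+\xx) = A_0 + A_1 s + A_2 s^2$ is real-rooted for every $\xx \in \RR^n$. Writing $e_j := e_j(\xx\setminus\{x_k\})$, $M_0 := \sum_{|S|=r,\, k \notin S}a(S)\prod_{i\in S}x_i^2 \leq m_{2^r}(\xx\setminus\{x_k\})$, and $M_1 := \sum_{|S|=r,\, k\in S}a(S)\prod_{i\in S\setminus\{k\}}x_i^2 \leq m_{2^{r-1}}(\xx\setminus\{x_k\})$, one finds $A_0 = 4e_{r+1}e_{r-1}+\tfrac{3}{r+1}M_0$, $A_1 = 4(e_r e_{r-1}+e_{r+1}e_{r-2})$, $A_2 = 4e_r e_{r-2}+\tfrac{3}{r+1}M_1$, so after cancellation of the $64\,e_{r+1}e_{r-1}e_r e_{r-2}$ term the discriminant condition $A_1^2 \geq 4 A_0 A_2$ simplifies to
\[
16(e_r e_{r-1} - e_{r+1}e_{r-2})^2 \;\geq\; \tfrac{48}{r+1}\bigl(e_{r+1}e_{r-1}M_1 + e_r e_{r-2}M_0\bigr) + \tfrac{36}{(r+1)^2}\,M_0 M_1. \quad (\star)
\]
Using $M_0 \leq m_{2^r}$ and $M_1 \leq m_{2^{r-1}}$, it suffices to prove $(\star)$ with $M_0, M_1$ replaced by the full symmetric polynomials. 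Lemma~\ref{eng} supplies $(a_r e_{r-1}e_r-e_{r-2}e_{r+1})^2 \geq C_r e_{r-2}e_r m_{2^r}$ with $a_r = 3(r-1)/(r+1)$ and $C_r = a_r \cdot \tfrac{3}{r+1}$, bounding the $e_r e_{r-2}m_{2^r}$ contribution; Lemma~\ref{turan} (Laguerre--Tur\'an) and Lemma~\ref{tsos} (the sum-of-squares refinement) jointly control the $e_{r+1}e_{r-1}m_{2^{r-1}}$ term and the cross product $m_{2^r}m_{2^{r-1}}$.

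\textbf{Main obstacle.} The principal difficulty is the algebraic verification of $(\star)$. For $r = 2$ we have $a_r = 1$, and Lemma~\ref{eng} matches the squared expression on the LHS exactly, so only the cross term $M_0 M_1$ needs to be absorbed via an AM--GM / Cauchy--Schwarz estimate combined with Lemma~\ref{tsos}. For $r > 2$ we have $a_r < 1$, and one must expand $e_r e_{r-1} - e_{r+1}e_{r-2} = (a_r e_r e_{r-1} - e_{r+1} e_{r-2}) + (1-a_r)e_r e_{r-1}$, square, and apply Lemma~\ref{turan} to show that the mixed terms $2(1-a_r)e_r e_{r-1}(a_r e_r e_{r-1} - e_{r+1}e_{r-2})$ and $(1-a_r)^2(e_r e_{r-1})^2$, together with the Lemma~\ref{eng} bound, dominate the remaining RHS contributions. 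The coefficient $\tfrac{3}{r+1}$ appearing in the statement of the theorem is exactly the constant that makes $C_r = a_r \cdot \tfrac{3}{r+1}$ and allows the discriminant inequality to close.
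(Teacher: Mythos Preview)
Your overall architecture (apply Lemma~\ref{garden} and reduce condition~(iv) to a discriminant inequality) is the same as the paper's, but the execution has two genuine gaps that the paper circumvents by a different choice of auxiliary polynomial.

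First, the monotonicity step is not justified. In $(\star)$ the right--hand side contains the terms $\tfrac{48}{r+1}\,e_{r+1}e_{r-1}\,M_1$ and $\tfrac{48}{r+1}\,e_r e_{r-2}\,M_0$; neither $e_{r+1}e_{r-1}$ nor $e_r e_{r-2}$ is nonnegative on $\RR^{n-1}$, so replacing $M_0, M_1$ by the larger quantities $m_{2^r}, m_{2^{r-1}}$ does \emph{not} make the right--hand side larger. The discriminant is biaffine in $(M_0,M_1)$ and these two quantities vary independently (they depend on disjoint sets of weights $a(S)$), so you would need to verify all four corners of the rectangle $[0,m_{2^r}]\times[0,m_{2^{r-1}}]$, not just the corner $(m_{2^r},m_{2^{r-1}})$.

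Second, your strategy for the corner inequalities via Lemma~\ref{eng} does not close. You write $a_r<1$ for $r>2$, but in fact $a_r = 3(r-1)/(r+1)>1$ for $r\geq 3$; thus your decomposition $e_re_{r-1}-e_{r+1}e_{r-2}=(a_re_re_{r-1}-e_{r+1}e_{r-2})+(1-a_r)e_re_{r-1}$ has the wrong sign, and in any case the cross term produced when squaring has no definite sign. Lemma~\ref{eng} bounds $(a_r e_{r-1}e_r-e_{r-2}e_{r+1})^2$, not $(e_{r-1}e_r-e_{r-2}e_{r+1})^2$, and the constant $C_r$ differs from the $\tfrac{3}{r+1}$ you need by precisely the factor $a_r$; the available lemmas do not supply the slack to absorb this mismatch together with the $e_{r+1}e_{r-1}M_1$ and $M_0M_1$ terms.

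The paper avoids both problems at once: instead of working with $4e_{r+1}(\xx)e_{r-1}(\xx)+\tfrac{3}{r+1}M(\xx)$ directly, it builds the auxiliary polynomial
\[
h(\xx)=4\bigl(a_r x_1 e_r(\xx')+e_{r+1}(\xx')\bigr)\bigl(x_1 e_{r-2}(\xx')+e_{r-1}(\xx')\bigr)+\tfrac{3}{r+1}M,
\]
and first assumes $M$ does not involve $x_1,\dots,x_{2r+3}$. With this choice $M_1\equiv 0$, so the discriminant of $s\mapsto h(s\delta_1+\xx)$ collapses exactly to the inequality of Lemma~\ref{eng}; there is no monotonicity reduction and no cross term. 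After proving $h$ stable, one sets $x_1=\cdots=x_{2r+3}=0$ (stability is preserved under specialization) to recover the polynomial in the theorem. This device---inserting the coefficient $a_r$ into the auxiliary polynomial rather than trying to extract it afterward---is the key idea missing from your plan. (A minor additional point: your choice $S=\RR^n$, $\xx_0=0$ violates condition~(iv) of Lemma~\ref{garden}, since $s\mapsto P(s\delta_k)$ is identically zero; the paper handles this by taking $\vv_2$, $\xx_0$ and $S$ more carefully and invoking Hurwitz at the end.)
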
 

\begin{proof}
We first prove the theorem for the special case when no $x_i$, $i=1,\ldots, 2r+3$ appears in $M$, and $n$ is sufficiently large. 
Let $\xx'=(x_2,\ldots,x_n)$ and 
$$
h(\xx)= 4(a_rx_1e_r(\xx')+e_{r+1}(\xx'))(x_1e_{r-2}(\xx')+e_{r-1}(\xx'))+ \frac 3 {r+1}M.
$$ 
where $a_r$ is defined as in Lemma \ref{eng}. Recall the notation of Lemma \ref{garden}. Let $\vv_1 = \delta_1$, $\vv_2= \delta_2 +\delta_3+\cdots+ \delta_{r+2}$, and let $S$ be the set of all $\xx \in \RR^n$ such that at least $r+1$ of the coordinates $\{x_{r+3}, \ldots, x_n\}$ are nonzero. Let $\xx_0= \delta_{r+3}+ \cdots + \delta_{2r+3}$. Then since 
$$
q(\xx)= 4(a_rx_1e_r(\xx')+e_{r+1}(\xx'))(x_1e_{r-2}(\xx')+e_{r-1}(\xx'))
$$
is stable we now that $h(s\vv_1+t\vv_2+ \xx_0)=q(s\vv_1+t\vv_2+ \xx_0)$ is stable. Note that $P(\xx)$ is a non-zero constant. Consider
\begin{align*}
h(s\vv_1+\xx) &= 4a_re_r(\xx')e_{r-2}(\xx')(s+x_1)^2 \\&+4(a_re_r(\xx')e_{r-1}(\xx')+e_{r+1}(\xx')e_{r-2}(\xx'))(s+x_1)\\
&+ 4e_{r+1}(\xx')e_{r-1}(\xx') + \frac 3 {r+1}M.
\end{align*}
We first prove that $h(s\vv_1+\xx) \not \equiv 0$ for $\xx \in S$. 
Assume $h(s\vv_1+\xx) \equiv 0$ for some $\xx \in S$. Then $e_r(\xx')e_{r-2}(\xx')=0$, so suppose first $e_r(\xx')=0$. If $e_{r+1}(\xx')e_{r-1}(\xx')=0$, then either $e_{r-1}(\xx')=e_r(\xx')=0$ or $e_{r+1}(\xx')=e_r(\xx')=0$, which implies $\xx'$ has at most $r-1$ non-zero coordinates, which contradicts $\xx \in S$. Hence $e_{r+1}(\xx')e_{r-1}(\xx')<0$ by Lemma \ref{turan}. 
Then the constant term satisfies 
\begin{align*}
4e_{r+1}(\xx')e_{r-1}(\xx') + \frac 3 {r+1}M &< 3e_{r+1}(\xx')e_{r-1}(\xx') + \frac 3 {r+1}M  \\
&\leq 3e_{r+1}(\xx')e_{r-1}(\xx') + \frac 3 {r+1}m_{2^r}(\xx') \leq 0, 
\end{align*}
by Lemma \ref{turan}, a contradiction. If $e_r(\xx') \neq 0$, then $e_{r-2}(\xx')=e_{r-1}(\xx')=0$ and hence $\xx$ has at most $r-3$ non-zero coordinates which contradicts $\xx \in S$.  We conclude that $h(s\vv_1+\xx) \not \equiv 0$ for $\xx \in S$. 

To prove that $h(s\vv_1+t\vv_2+\xx)$ is stable for all $\xx \in S$ it remains to prove that $h(s\vv_1+\xx)$ is real--rooted. However $h(s\vv_1+\xx)$ is of degree at most two so it suffices to show that its discriminant $\Delta$ is nonnegative. Now 
\begin{align*}
\frac \Delta {16} &= (a_re_{r-1}(\xx') e_r(\xx')-e_{r-2}(\xx') e_{r+1}(\xx'))^2 - \frac 3 {r+1} a_re_r(\xx')e_{r-2}(\xx')M.
\end{align*}
If $e_r(\xx')e_{r-2}(\xx')<0$, then clearly $\Delta \geq 0$, so assume $e_r(\xx')e_{r-2}(\xx') \geq 0$. Then, since $M(\xx') \leq m_{2^r}(\xx')$, it follows that $\Delta \geq 0$
by Lemma \ref{eng}. 

Since $S$ is dense in $\RR^n$ we have by Hurwitz' theorem that 
 $h(s\vv_1+t\vv_2+\xx)$ is stable or identically zero for all $\xx \in \RR^n$. However 
 $h(\vv_2) \neq 0$ so that $h(s\vv_1+t\vv_2+\xx)$ is stable for all $\xx \in \RR^n$. In particular $h$ is hyperbolic with respect to $\vv_2$. Since all Taylor coefficients of $h$ are nonnegative we have that $h$ is stable, by Lemma \ref{hypbas}. 

The theorem follows in full generality from the special case by setting $x_1 = x_2 = \cdots =x_{2r+3}=0$ in $h$, and relabeling the variables. 

\end{proof}

\begin{lemma}
\label{elemsymcalc}
Let $r \geq 2$. Then
\begin{align*}
&e_{2r}(x_1, x_1, \dots, x_n, x_n) - m_{2^r}(\xx) = \\
&4 (e_{r-1}(\xx)e_{r+1}(\xx)+ e_{r-3}(\xx)e_{r+3}(\xx)+e_{r-5}(\xx)e_{r+5}(\xx)+\cdots).
\end{align*}
\end{lemma}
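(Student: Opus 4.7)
My plan is to derive both sides from a single generating function identity and then read off the difference.

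First I would observe that evaluating the generating function for elementary symmetric polynomials on the doubled alphabet $(x_1,x_1,\ldots,x_n,x_n)$ gives
\begin{align*}
\sum_{k\geq 0} e_k(x_1,x_1,\ldots,x_n,x_n)\,t^k = \prod_{i=1}^n (1+x_it)^2 = \left(\sum_{k\geq 0} e_k(\xx)\,t^k\right)^{\!2}.
\end{align*}
Extracting the coefficient of $t^{2r}$ yields the convolution
\begin{align*}
e_{2r}(x_1,x_1,\ldots,x_n,x_n) = \sum_{j=0}^{2r} e_j(\xx)\,e_{2r-j}(\xx).
\end{align*}

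Next I would invoke Jensen's identity \eqref{jensid}, already established in the paper, which states
\begin{align*}
m_{2^r}(\xx) = \sum_{k=0}^{2r} (-1)^{k+r} e_k(\xx)\,e_{2r-k}(\xx).
\end{align*}
Subtracting these two expressions, the term $j=r$ (namely $e_r(\xx)^2$) appears with coefficient $1$ on both sides and cancels. For the remaining terms I would pair $j$ with $2r-j$ (they contribute equally), reducing to a sum over $0 \leq j \leq r-1$ with each term doubled. In this sum the coefficient is $1-(-1)^{j+r}$, which equals $0$ when $j+r$ is even and $2$ when $j+r$ is odd, so the final coefficient is $4$ on odd-parity terms.

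Reindexing by $\ell = r-j$ (so that odd $j+r$ corresponds to odd $\ell$), the surviving terms are precisely $e_{r-\ell}(\xx)e_{r+\ell}(\xx)$ for $\ell=1,3,5,\ldots$, which is exactly the right-hand side of the claimed identity. No step should pose any real obstacle; the only care needed is bookkeeping the parity and the factor-of-two from pairing $j$ with $2r-j$.
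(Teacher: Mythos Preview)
Your proposal is correct and follows essentially the same approach as the paper: the paper likewise uses the generating function identity $\prod_i(1+x_it)^2=\big(\sum_k e_k(\xx)t^k\big)^2$ to obtain $e_{2r}(x_1,x_1,\ldots,x_n,x_n)=\sum_{j=0}^{2r}e_j(\xx)e_{2r-j}(\xx)$ and then combines it with Jensen's identity \eqref{jensid}. You merely spell out the parity bookkeeping that the paper leaves implicit.
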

\begin{proof}
Note that 
\begin{align*}
\displaystyle \sum_{k=0}^{2n} e_k(x_1,x_1, \dots, x_n, x_n)t^k = \prod_{j= 1}^n (1 + x_jt)^2 = \left ( \sum_{k=0}^{2n} e_k(\mathbf{x}) t^k \right )^2.
\end{align*} \noindent
The coefficient of $t^{2r}$ is
\begin{align*}
\displaystyle e_{2r}(x_1, x_1, \dots, x_n, x_n) = \sum_{j=0}^{2r}  e_j(\mathbf{x})e_{2r - j}(\mathbf{x}). 
\end{align*} \noindent
The proof follows by combining this with \eqref{jensid}. 
\end{proof}
We are now in a position to prove Theorem \ref{genvamoshpp} and Theorem \ref{genvamoshyp}. 

\begin{proof}[Proof of Theorem \ref{genvamoshyp}]
By definition the bases generating polynomial of $V_H \in \mathcal{V}$ is given by
\begin{align*}
\displaystyle h_{V_H} = \sum_{B \in \mathcal{B}(V_H)} \prod_{i \in B} x_{i} = e_{2r}(x_1, x_{1'}, \dots, x_n, x_{n'}) - e_r(x_1x_{1'}, \dots, x_n x_{n'}) + N(\xx).
\end{align*} \noindent
where 
\begin{align*}
N(\xx) = \sum_{(i_1, \dots, i_r) \not \in E(H) } \prod_{j = 1}^r x_{i_j}x_{i_j'}.
\end{align*} \noindent
The polynomial $h_{V_H}$ is clearly multiaffine and symmetric pairwise in $x_i, x_{i'}$ for all $i \in [n]$. Set $x_{i'}=x_i$ for all $1 \leq i \leq n$ and obtain the polynomial 
\begin{align*}
\displaystyle f_{V_H} = e_{2r}(x_1, x_1, \dots, x_n, x_n) - e_r(x_1^2, \dots, x_n^2) + N(x_1, x_1, \dots, x_n, x_n).
\end{align*} \noindent
 By Lemma \ref{elemsymcalc}
\begin{align*}
\displaystyle f_{V_H} = 4 \sum_{j = 0}^{\lceil r/2 \rceil - 1} e_{r + 2j+1}(\mathbf{x})e_{r-2j-1}(\mathbf{x}) + N(x_1, x_1, \dots, x_n, x_n).
\end{align*} \noindent
The support of $e_{r + j}(\mathbf{x})e_{r-j}(\mathbf{x})$ is contained in the support of $e_{r+1}(\xx)e_{r-1}(\xx)$ for each $1 \leq j \leq r$. Hence $f_{V_H}$ has the same support  as the polynomial
\begin{align*}
\displaystyle W_{V_H} = 4e_{r+1}(\xx)e_{r-1}(\xx) + \frac{3}{r+1}N(x_1, x_1, \dots, x_n, x_n)
\end{align*} \noindent
which in turn is stable by Theorem \ref{basegenstab}. Hence if we replace $x_i^k$, $k=0,1,2$, in $W_{V_H}$ with $e_k(x_i, x_{i'})/\binom 2 k$, we obtain a polynomial 
which is stable by the Grace-Walsh-Szeg\H{o} theorem, and has the same support as $h_{V_H}$. Hence $V_H$ is a WHPP-matroid so $V_H$ is hyperbolic by Proposition \ref{whpphyp}.
\end{proof}

\begin{proof}[Proof of Theorem \ref{genvamoshpp}]
Recall the notation in the proof of Theorem \ref{genvamoshyp}. If $r=2$, then $W_{V_G} = f_{V_G}$, so that $V_G$ has the half-plane property by the proof of Theorem \ref{genvamoshyp}.
\end{proof}

\end{document}